\documentclass[reqno, dvipsnames]{amsart}
\usepackage[utf8]{inputenc}
\usepackage{amscd, amssymb, amsthm, mathrsfs, amsmath, amsrefs, amsfonts, graphicx}
\usepackage[all]{xy}
\usepackage{pifont}
\usepackage[colorlinks,linkcolor=red,citecolor=blue,urlcolor=blue,hypertexnames=false]{hyperref}
\hypersetup{
unicode=true
}
\usepackage{graphics}
\usepackage{tensor}
\usepackage{xcolor}
\usepackage{mathcommand}
\usepackage{todonotes}
\usepackage{enumitem}

\usepackage{tikz}
\usetikzlibrary{arrows.meta}

\numberwithin{equation}{section}
\newtheorem{thm}[equation]{Theorem}
\newtheorem{prop}[equation]{Proposition}
\newtheorem{coro}[equation]{Corollary}
\newtheorem{lem}[equation]{Lemma}

\theoremstyle{definition}
\newtheorem{defi}[equation]{Definition}
\newtheorem{con}[equation]{Construction}
\newtheorem{rem}[equation]{Remark}
\newtheorem{exa}[equation]{Example}

\usepackage{todonotes}

\newcommand{\Alg}{{\mathbf{Alg}}}
\newcommand{\GrAlg}{{G_{\hspace{-0.1em}\mathrm{gr}}\Alg}}
\newcommand{\Ggr}{{{G_{\hspace{-0.1em}\mathrm{gr}}}}}
\newcommand{\jgr}{{{j^G_{\hspace{-0.1em}\mathrm{gr}}}}}

\newcommand{\sSet}{\mathbf{sSet}}

\newcommand{\id}{\mathrm{id}}

\newcommand{\colim}{\operatornamewithlimits{colim}}
\newcommand{\sd}{\mathrm{sd}}
\newcommand{\op}{\mathrm{op}}

\newcommand{\N}{\mathbb{N}}

\newcommand{\kk}{{kk}}

\newcommand{\cC}{\mathcal{C}}

\newcommand{\GAlg}{{G\Alg}}

\renewcommand{\t}[1]{{\tilde{#1}}}
\newcommand{\St}{\mathrm{St}}
\newcommand{\End}{\mathrm{End}}
\newcommand{\Hom}{\mathrm{Hom}}
\newcommand{\lev}{\mathrm{lev}}
\newcommand{\kkgr}{{{\kk_{\hspace{-0.1em}\mathrm{gr}}^G}}}
\newcommand{\Simp}{\mathrm{Simp}}
\newcommand{\IntSimp}{\mathrm{IntSimp}}
\newcommand{\card}{\mathrm{card}}

\renewcommand{\o}[1]{{\overline{#1}}}

\begin{document}

\title{Matrix stability and Morita invariance}
\author{Eugenia Ellis}
\email{eellis@fing.edu.uy}
\address{IMERL. Facultad de Ingenier\'\i a. Universidad de la Rep\'ublica. Montevideo, Uruguay.}
\author{Emanuel Rodr\'iguez Cirone}
\email{erodriguezcirone@cbc.uba.ar}
\address{Dep. Matemática -- CBC -- UBA, Buenos Aires, Argentina.}

\subjclass[2020]{16D90, 16S50, 16W22, 16W50, 19K35.}
\keywords{Morita equivalence, matrix algebras, $G$-algebras, $G$-graded algebras, algebraic $\kk$-theory.}

\thanks{All authors were partially supported by grants ANII-FCE\_1\_2025\_1\_186147, \emph{Algebraic and homotopic aspects of noncommutative geometry}, PICT-2021-I-A-00710, \emph{$K$-theory, homology, and noncommutative geometry}, and UBACyT 2023 20020220300206BA, \emph{Álgebra, geometría y topología no conmutativas}. The first author was partially supported by ANII, CSIC and PEDECIBA}

\begin{abstract}
Let $G$ be a group. We prove that matrix stability for either $G$-algebras or $G$-graded algebras guarantees Morita invariance. As a consequence, bivariant algebraic $K$-theory (either $G$-equivariant or $G$-graded) is Morita invariant. In particular, we show that if $G$ is a finite group acting freely on a finite simplicial set $X$, then $\ell^X\rtimes G$ and $\ell^{X/G}$ are $\kk$-equivalent. Here,  $\ell^Y$ denotes the $\ell$-algebra of piecewise polynomial functions on $Y$ with coefficients in the ground ring $\ell$.
\end{abstract}

\maketitle
\section{Introduction}

Let $\ell$ be a commutative ring with unit. Write $\Alg$ for the category of associative and not necessarily unital $\ell$-algebras and let $\Alg^u\subseteq \Alg$ be the subcategory whose objects are the unital algebras and whose morphisms are unital algebra homomorphisms.

Two algebras $R,S\in\Alg^u$ are \textit{Morita equivalent} \cite{morita} if there  exists a tuple $(R,S,P,Q,\alpha,\beta)$ where $P$ is an $S$-$R$-bimodule, $Q$ is an $R$-$S$-bimodule, $\alpha: P\otimes_{R} Q\rightarrow S$ is an $S$-bimodule isomorphism and $\beta:Q\otimes _{S} P \rightarrow R$ is an $R$-bimodule isomorphism. The morphisms $\alpha$ and $\beta$ are moreover required to satisfy the mixed associativity relations
        \[
            \alpha(p\otimes q)\cdot p' = p\cdot \beta(q\otimes p') \qquad\text{and}\qquad
            \beta(q\otimes p)\cdot q' = q\cdot \alpha(p\otimes q')\]
        for all $p,p'\in P$ and $q,q'\in Q$.

It is well known that matrix algebras play an essential role in Morita theory. For example, any unital algebra $R$ is Morita equivalent to its algebra of $n\times n$-matrices $M_nR$. Moreover, two unital algebras $R$ and $S$ are Morita equivalent if and only if the algebras $M_\infty R$ and $M_\infty S$ are isomorphic; see, for example \cite{agr}. The notion of Morita equivalence can be generalized both to algebras with the action of a group and to graded algebras over a group. In the case of graded algebras over an \emph{abelian} group, the relation between Morita equivalences and matrix algebras we recently studied in \cite{agr}.

A functor $F:\Alg^u\rightarrow \mathcal{C}$ is called \emph{Morita invariant} if $F(R)\cong F(S)$ whenever $R$ is Morita equivalent to $S$.
Let us now consider a functor $F:\Alg \to \cC$. On one hand, $F$ is called \emph{Morita invariant} if its restriction to $\Alg^u$ is.
On the other, $F$ is called \emph{$M_{n}$-stable} if we have an isomorphism $F(A)\cong F(M_{n}A)$ induced by the upper-left corner embedding of $A\to M_nA$.
It was pointed out in \cite{cortho} that the $M_{2}$-stability of $F:\Alg \to\cC$ implies its Morita invariance. The main result of this work is a generalization of this fact to the equivariant and graded contexts.

Let $G$ be a group. We consider $\GAlg$ the category of $G$-algebras with equivariant morphisms and $\GrAlg$ the category of $G$-graded algebras with homogeneous morphisms.   
The notion of matrix stable functor has been generalized to the $G$-equivariant and $G$-graded contexts \cite{ekk}, as we proceed to recall. In both cases, we use the algebra $M_{S}$ of finitely supported matrices with coefficients in $\ell$ indexed by a set $S$. We write $E_{s,t}\in M_S$ for  the matrix whose $(s,t)$-entry is $1$ and all other entries are $0$. Note that $M_S$ is a free $\ell$-module with basis $\{E_{s,t}\}_{s,t\in S}$.

\begin{enumerate}

    \item Let $S$ be a $G$-set.
    Endow $M_S$ with the $G$-action defined by
\[g\cdot E_{s,t}= E_{g\cdot s,g\cdot t}.\]
Note that an injective morphism of $G$-sets $f : S \to T$ induces a morphism $f_*:M_S\to M_T$ by the formula $f_*(E_{s,t})=E_{f(s),f(t)}$.
If $A\in\GAlg$, we write $M_SA=M_S\otimes A$ with the diagonal action. A functor $F:\GAlg \to \cC$ is called {\emph{$G$-stable}} if $F( f_{*}:M_SA\to M_TA)$ is an isomorphism for every $G$-algebra $A$ and every injective morphism of $G$-sets $f : S \to T$ with $\operatorname{card}(T) \leq \operatorname{card}(G)$.
\item  Let $A$ be a $G$-graded algebra and let $\phi:S\to G$ be a $G$-graded set; see section \ref{sec:GGgr} for definitions and notation. Then $M_S(A)$ denotes the algebra $M_S\otimes A$ endowed with the grading
\[|E_{s,t}\otimes a|= \phi(s)|a|\phi(t)^{-1}\]
for homogeneous $a\in A$.
Note that an injective morphism of $G$-graded sets $f : S \to T$ induces a morphism $f_*:M_S(A)\to M_T(A)$ by the formula $f_*(E_{s,t}\otimes a)=E_{f(s),f(t)}\otimes a$.
A functor $F:\GrAlg \to \cC$ is called {\emph{$G$-graded stable}} if $F(f_*:M_S(A)\to M_T(A))$ is an isomorphism for every $A\in \GrAlg$ and every injective morphism of $G$-graded sets $f:S\to T$ with $\card(T)\leq \card(G)$. 
\end{enumerate}
On the other hand we have $G$-Morita equivalences in $G\Alg^{u}$, see Definition \ref{defi:GMorita}, and $G$-graded Morita equivalences in $\GrAlg^{u}$, see Definition \ref{defi:GgrMorita}.
Our main result is the following.
\begin{thm} \label{mainthm}
    Let $G$ be a group.
    \begin{enumerate}
    \item If $F:\GAlg\to\cC$ is a $G$-stable functor, then $F$ is $G$-Morita invariant.
    \item If $F:\GrAlg\to\cC$ is a $G$-graded stable functor, then $F$ is $G$-graded Morita invariant.
    \end{enumerate}
\end{thm}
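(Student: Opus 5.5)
The proof adapts the classical argument of \cite{cortho}, which deduces Morita invariance from $M_2$-stability via the linking algebra. Throughout, we take a $G$-Morita equivalence $(R,S,P,Q,\alpha,\beta)$ in the sense of Definition \ref{defi:GMorita} (respectively Definition \ref{defi:GgrMorita}). The aim is to build a unital $G$-algebra (respectively $G$-graded algebra) $L$ together with two homomorphisms $\iota_R:R\to L$ and $\iota_S:S\to L$, and to show that $F(\iota_R)$ and $F(\iota_S)$ are isomorphisms. It then follows that $F(R)\cong F(L)\cong F(S)$.

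\textbf{Step 1: the linking algebra.} Take
\[
L=\begin{pmatrix} R & Q\\ P & S\end{pmatrix},
\]
with multiplication given by the module actions together with $\alpha$ and $\beta$. Associativity of $L$ is exactly the mixed associativity relations. In the equivariant case $G$ acts entrywise; this uses that $P$ and $Q$ are $G$-equivariant bimodules and that $\alpha,\beta$ are equivariant. In the graded case $L$ is graded entrywise, and $P$, $Q$ are graded bimodules. The corner inclusions $\iota_R$ and $\iota_S$ are then (non-unital) morphisms in $\GAlg$, respectively $\GrAlg$.

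\textbf{Step 2: reduction to corner embeddings.} It suffices to show that $F(\iota_R)$ is invertible, since $\iota_S$ is handled symmetrically. The classical trick works in two stages.
\begin{itemize}
\item Since $\beta$ is surjective, $1_R=\sum_{i=1}^n\beta(q_i\otimes p_i)$. This yields maps $R\to M_n(S)$ (roughly $r\mapsto (p_i r q_j)$, interpreted through $\alpha$) and $S\to M_m(R)$, and both composites are conjugate to corner embeddings.
\item Equivalently, one compares $\iota_R$ with $\iota_R':L\to M_{2}L$ and uses that conjugation by suitable invertible or partially invertible elements induces the identity under a matrix-stable functor. This is the standard fact that inner endomorphisms act trivially.
\end{itemize}

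In the equivariant setting, the finitely many elements $q_i,p_i$ are not $G$-invariant. To remedy this, index the matrices by the $G$-set $S_0=G\times\{1,\dots,n\}$, or by some $G$-set of cardinality at most $\card(G)$. The map $R\to M_{S_0}L$ is then
\[
r\mapsto \sum_{g,h,i,j} E_{(g,i),(h,j)}\otimes (g\cdot p_i)\, r\,(h\cdot q_j),
\]
which is suitably equivariant. The bound $\card(S_0)\le\card(G)$ holds when $G$ is infinite. When $G$ is finite one needs $\card(S_0)\le\card(G)$ as well, so I would instead reduce the number of generators to $n=1$ after passing to matrices; alternatively one may note that $G$-stability implies $M_S$-stability for all finite $S$ by composing injections. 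In the graded case, instead of using $g\cdot$, choose homogeneous $p_i,q_i$. Their degrees give a $G$-graded set $\phi:\{1,\dots,n\}\to G$, and the map $R\to M_{\{1..n\}}(S)$ becomes homogeneous with respect to the grading $\phi(s)|a|\phi(t)^{-1}$.

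\textbf{Step 3: the key lemma and the main obstacle.} The crucial step is to show, inside the stable setting, that for a $G$-invariant (respectively degree-$e$) element-wise conjugation datum the induced self-map of $M_TA$ becomes the identity after applying $F$. The proof runs via a rotation-type argument in $M_{T\sqcup T}$: the two corner inclusions $M_TA\to M_{T\sqcup T}A$ become equal after applying $F$, because they differ by conjugation by the permutation matrix swapping the copies. That permutation matrix is $G$-invariant, respectively of degree $e$.

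I expect the main obstacle to be making this rotation argument work with only the allowed cardinality bound $\card(T)\le\card(G)$ and with non-unital corner maps. I would first prove it for $T=S\sqcup S$ with $S$ of cardinality at most $\card(G)$, handling finite $G$ separately. Granting this lemma, the composites in Step 2 are identified with corner embeddings, which $F$ inverts by hypothesis, and both parts of Theorem \ref{mainthm} follow.
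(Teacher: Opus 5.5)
There is a genuine gap, and it sits exactly where the equivariant statement differs from the classical one. Your map $r\mapsto\sum_{g,h,i,j}E_{(g,i),(h,j)}\otimes(g\cdot p_i)\,r\,(h\cdot q_j)$ is equivariant but not multiplicative. In the product of two images the middle factor is $\sum_{k\in G}\sum_{l}(k\cdot q_l)(k\cdot p_l)=\sum_{k\in G}k\cdot 1_R$. This is $\card(G)\cdot 1_R$ for finite $G$ and undefined for infinite $G$; in the infinite case the image also has infinitely many nonzero entries, so it does not even lie in $M_{S_0}L$.

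No better choice of elements repairs this. An equivariant homomorphism $r\mapsto(x_a r y_b)_{a,b}$ into $M_{S_0}S$ needs $x_{k\cdot a}=k\cdot x_a$, $y_{k\cdot a}=k\cdot y_a$ and $\sum_a y_ax_a=1_R$. This is impossible for $\ell=\mathbb{Z}$, $G=\mathbb{Z}/2$, $R=S=\mathbb{Z}$ with trivial action, and $P=Q=\mathbb{Z}$ with the generator acting by $-1$: fixed indices force $x_a=0$, and each free orbit contributes an even integer. The linking algebra does not help, since a map $L\to M_{S_0}R$ of the same shape meets the same obstruction in the $S$-corner. So Step 2 never produces the maps that Step 3 is meant to handle, and the obstacle you isolate in Step 3 (an invariant swap of two copies of $T$) is not the real one.

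The missing idea is to stabilize the source by $M_G$ as well, so that the twist is tied to the $M_G$-index rather than summed over. The formula $E_{g,h}\otimes r\mapsto E_{g,h}\otimes\big((g\cdot p_i)\,r\,(h\cdot q_j)\big)_{i,j}$ gives an equivariant homomorphism $M_GR\to M_GM_nS$, because now only $\sum_i(h\cdot q_i)(h\cdot p_i)=h\cdot 1_R=1_R$ appears. Up to exchanging the roles of $R$ and $S$, this is $\zeta\circ M_G(\xi)$ from Construction \ref{con:GMorita}. The index $\{1,\dots,n\}$ then carries the trivial action and is handled by $M_2$-stability, while $G$-stability is used only for $F(R)\cong F(M_GR)$. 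Thus $G\times\{1,\dots,n\}$ never enters the stability hypothesis. The composite of the two Morita maps is the twisted corner $\iota_{u,v}$ of Lemma \ref{lem:Gcorner}. It is compared with the ordinary corner by conjugating with $W=\sum_g E_{g,g}\otimes(g\cdot V)$, which is $G$-invariant precisely because the $G$-index carries the twist; when $G$ is infinite, $W$ lies only in the cone algebra.

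In the graded case your homogeneous map $R\to M_T(S)$ is fine once $|q_i|=|p_i|^{-1}$ is arranged as in the paper. However, to compare the composite with a homogeneous corner embedding you would need an extra index of degree $e$, and you would need to control the cardinality of the enlarged graded set. The paper sidesteps both via $M_G(M_T(R))\cong M_{|T|}M_G(R)$ (Lemma \ref{lem:CDAgraduada}) and Lemma \ref{lem:Gcornergr}.

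Your fallback for finite $G$, composing injections to get stability for all finite $G$-sets, is not justified for non-trivial $G$-sets. Note that the paper's own step $F(R)\cong F(M_GR)$ passes through $G_+=G\sqcup\{*\}$, whose cardinality exceeds $\card(G)$ when $G$ is finite, so the bound must be read more generously there.
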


An example of matrix stable fuctor is the bivariant algebraic $K$-theory (for short, $\kk$-theory) that was introduced in \cite{cortho} as an algebraic analogue to Kasparov's $KK$-theory of $C^*$-algebras. It consists of an additive category $\kk$ endowed with a functor $j:\Alg \to \kk$ that is (polynomial) homotopy invariant, matrix stable and excisive with respect to a certain family of extensions. This functor $j$ is moreover initial among the functors with the mentioned properties.
As pointed out in \cite{cortho}, the functor $j$ is Morita invariant since it is $M_2$-stable.

If $G$ is a group, $G$-equivariant and $G$-graded versions of algebraic $\kk$-theory were introduced in \cite{ekk}.
In the $G$-equivariant context, we have an additive category $\kk^G$ endowed with a universal homotopy invariant, $G$-stable and excisive functor $j^G:\GAlg\to\kk^G$; see \cite{ekk}*{Thm. 4.1.1}. For $G$-graded algebras, we have an additive category $\GrAlg$ together with a universal homotopy invariant, $G$-graded stable and excisive functor $\jgr:\GrAlg\to \kkgr$; see \cite{ekk}*{Thm. 4.2.1}.
As a consequence of Theorem \ref{mainthm} we get the following result.
\begin{coro}
    Let $G$ be a group.
    \begin{enumerate}
        \item The functor $j^G:\GAlg\to\kk^G$ is $G$-Morita invariant.
        \item The functor $\jgr:\GrAlg\to\kkgr$ is $G$-graded Morita invariant.
    \end{enumerate}
\end{coro}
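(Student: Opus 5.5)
The corollary will follow directly from Theorem \ref{mainthm}, once we check that $j^G$ and $\jgr$ meet its hypotheses. For part (1), recall from \cite{ekk}*{Thm. 4.1.1} that $j^G:\GAlg\to\kk^G$ is, by construction, homotopy invariant, excisive and $G$-stable. Here $G$-stable is meant in exactly the sense used in the introduction: $j^G(f_*:M_SA\to M_TA)$ is an isomorphism for every $G$-algebra $A$ and every injective morphism of $G$-sets $f:S\to T$ with $\card(T)\leq\card(G)$. Theorem \ref{mainthm}(1) applies with $\cC=\kk^G$ and $F=j^G$. It yields that $j^G(R)\cong j^G(S)$ whenever $R$ and $S$ are $G$-Morita equivalent unital $G$-algebras in the sense of Definition \ref{defi:GMorita}.

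Part (2) is proved the same way. By \cite{ekk}*{Thm. 4.2.1}, the functor $\jgr:\GrAlg\to\kkgr$ is $G$-graded stable, meaning that it inverts $f_*:M_S(A)\to M_T(A)$ for every injective morphism of $G$-graded sets $f:S\to T$ with $\card(T)\leq\card(G)$. Theorem \ref{mainthm}(2) then gives $G$-graded Morita invariance in the sense of Definition \ref{defi:GgrMorita}.

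The only point that needs care is that the stability notions used in \cite{ekk} agree with those above. In particular, we must check that the cardinality bound and the conventions for the grading on $M_S(A)$ coincide; in the graded case this means the grading $|E_{s,t}\otimes a|=\phi(s)|a|\phi(t)^{-1}$, as opposed to the inverse convention. If \cite{ekk} uses the opposite convention, we would compose with the bijection $S\to S$ that replaces $\phi$ by $\phi^{-1}$ (or pass to the opposite grading) to match them. Apart from this bookkeeping, there is no real obstacle, since all the substantive work is contained in Theorem \ref{mainthm}.
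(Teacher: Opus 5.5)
Your proposal is correct and matches the paper, which obtains the corollary by applying Theorem \ref{mainthm} to $j^G$ and $\jgr$; these are $G$-stable and $G$-graded stable by \cite{ekk}*{Thm. 4.1.1} and \cite{ekk}*{Thm. 4.2.1}, and for the bookkeeping you only need the stability notions of \cite{ekk} to imply the ones used here (the paper's remark after the definition of $G$-stable functors says \cite{ekk} additionally requires $M_\N$-stability in the equivariant case). One caveat, which the paper shares: for trivial $G$ the bound $\card(T)\leq\card(G)$ makes the stability hypothesis vacuous, so in that case you should instead use that these functors are $M_2$-stable (for trivial $G$ both are just $j:\Alg\to\kk$) and apply Corollary \ref{cor:gid} directly.
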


Motivated by the relation between crossed product algebras and quotients in the noncommutative geometry setting developed by Alain Connes, we obtain:
\begin{thm}[cf. \cite{khal}*{Theorem 2.5.1}]\label{thm:kk}
    Let $G$ be a finite group acting freely on a finite simplicial set $X$. Suppose moreover that $\St(v)\cap \St(\overline{\St(g\cdot v)})=\emptyset$ for every $v\in X_0$ and every $g\in G$, $g\ne e$. Then the algebras $\ell^{X/G}$ and $\ell^{X}\rtimes G$ are Morita equivalent.  
\end{thm}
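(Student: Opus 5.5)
The plan is to realize $\ell^X\rtimes G$ as a full corner of a matrix algebra over $\ell^{X/G}$, or equivalently to exhibit $\ell^{X/G}\cong(\ell^X)^G$ as a full corner of $\ell^X\rtimes G$. This is the algebraic analogue of the classical statement that for a free action the fixed point algebra is Morita equivalent to the crossed product. Before starting, I would check that $\ell^{X/G}$ identifies with the invariants $(\ell^X)^G$. Since $G$ acts freely, $X\to X/G$ is a covering of simplicial sets, and a $G$-invariant compatible family of polynomial functions on the simplices of $X$ descends to a compatible family on the simplices of $X/G$, and conversely. The hypothesis on stars should ensure that $X/G$ behaves like a genuine quotient simplicial set with no identifications inside a single star.

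The main construction is an idempotent $p=\frac{1}{|G|}\sum_g u_g$, but $|G|$ need not be invertible in $\ell$. I will therefore avoid it and use a partition of unity adapted to the stars instead. For each vertex $v\in X_0$, let $\chi_v\in\ell^X$ be the piecewise polynomial ``barycentric coordinate'' (hat function), supported on $\St(v)$. These satisfy $\sum_v\chi_v=1$ and $g\cdot\chi_v=\chi_{g\cdot v}$. The star hypothesis $\St(v)\cap\St(\overline{\St(g v)})=\emptyset$ for $g\ne e$ gives $\chi_v\cdot(g\cdot\chi_w)=0$ whenever $w$ lies in a neighbourhood of $v$ and $g\neq e$. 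Choose a set $V$ of representatives of the orbits $X_0/G$, and put
\[
e=\sum_{v\in V}\chi_v,\qquad f=\sum_{g\in G}u_g\,e\,u_{g^{-1}}=\sum_{g}g\cdot e=1 .
\]
The goal is to build bimodules $P=e(\ell^X\rtimes G)$ and $Q=(\ell^X\rtimes G)e$ that implement the Morita equivalence. Since $\chi_v$ is not idempotent, I will instead use the elements $\sqrt{}$-free ``transfer'' maps. Concretely, set $P=\ell^X$ viewed as an $\ell^{X/G}$-$(\ell^X\rtimes G)$-bimodule, and $Q=\ell^X$ with the mirror structure. The pairings will be
\[
\alpha(\xi\otimes\eta)=\sum_g g\cdot(\xi\eta)\in(\ell^X)^G=\ell^{X/G},\qquad \beta(\eta\otimes\xi)=\sum_g \eta\,(g\cdot\xi)\,u_g .
\]
This is the standard Rieffel-type imprimitivity setup. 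The mixed associativity relations follow from direct computation, exactly as in the $C^*$ case.

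The main obstacle is surjectivity of $\alpha$ and $\beta$, since fullness is what upgrades these pairings to bimodule isomorphisms (by Morita's theorem, surjective pairings satisfying associativity give an equivalence). For $\alpha$, surjectivity follows because $\alpha(\sum_{v\in V}\chi_v\otimes 1)=\sum_g g\cdot e=1$. For $\beta$, I need $1=\sum_i\beta(\eta_i\otimes\xi_i)$ in $\ell^X\rtimes G$, that is, $\sum_i\eta_i(g\cdot\xi_i)=\delta_{g,e}$. My first attempt is $\eta_i=\chi_v$, $\xi_i=\psi_v$ with $\psi_v$ equal to $1$ on $\overline{\St(v)}$, or a suitable polynomial cutoff. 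The disjointness hypothesis then kills the terms with $g\ne e$, since $\chi_v\cdot(g\cdot\psi_v)$ is supported in $\St(v)\cap g\overline{\St(v)}$. The delicate point is producing such cutoffs $\psi_v$ inside $\ell^X$ supported in $\St(\overline{\St(v)})$; this is precisely where the hypothesis is tailored. I would verify it simplexwise, using that piecewise polynomial functions are glued from face-compatible polynomials, and that the function equal to $1$ on the closed star and supported on the star of the closed star is the sum of the hat functions of the vertices of $\overline{\St(v)}$. Once fullness is established, Morita equivalence follows. Applying the corollary to $j$ (with trivial $G$) then gives the $\kk$-equivalence.
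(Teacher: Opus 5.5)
Your argument is correct. It repackages the paper's proof rather than repeating it.

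\textbf{How the two routes compare.} The paper works with $S=\ell^X$ as a module over $R=S^G=\ell^{X/G}$. It uses the Chase--Harrison--Rosenberg criterion: the identity $\sum_v\phi_v(g\cdot\psi_v)=\delta_{g,e}$ shows that $S$ is finitely generated projective over $R$ with $S\rtimes G\cong\End_R(S)$. It then shows $S$ is a generator via the trace $T=\sum_g g\cdot(-)$, using $T(\sum_{v\in F}\phi_v)=1$, and concludes with the progenerator form of Morita's theorem.

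You instead write down the Rieffel-type context on $\ell^X$ directly and use the elementary fact that a Morita context with both pairings surjective is an equivalence. The two surjectivity checks are exactly the paper's two computations. Surjectivity of your $\beta$ is the Chase--Harrison--Rosenberg identity, and surjectivity of your $\alpha$ is $T(\phi_F)=1$.

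What your route buys is self-containedness. It produces the tuple $(R,S,P,Q,\alpha,\beta)$ of the paper's definition explicitly, without quoting Galois theory or the progenerator theorem. Your cutoff $\psi_v=\sum_{w\in\overline{\St(v)}_0}\chi_w$ is also explicit, replacing the paper's appeal to an extension result. It equals $1$ on each simplex of $\overline{\St(v)}$, since all of that simplex's vertices lie in $\overline{\St(v)}$. It vanishes on simplices with no vertex there. The paper's route, in return, records the Galois-type fact that $\ell^X$ is finitely generated projective over $\ell^{X/G}$ with endomorphism ring $\ell^X\rtimes G$.

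\textbf{Things to tidy up.}
\begin{itemize}
\item State the actions explicitly, since balancedness depends on them. Take the right action on $P$ to be $\xi\cdot(\phi u_g)=g^{-1}\cdot(\xi\phi)$ and the left action on $Q$ to be $(\phi u_g)\cdot\eta=\phi\,(g\cdot\eta)$. With these, $\alpha$ is balanced over $\ell^X\rtimes G$, and $\beta$ is balanced over $\ell^{X/G}$ by $G$-invariance. Both mixed associativity relations then hold by direct computation.
\item The support of $g\cdot\psi_v=\psi_{g\cdot v}$ is $\St(\overline{\St(g\cdot v)})$, not $g\cdot\overline{\St(v)}$. The cross terms vanish precisely because of the hypothesis $\St(v)\cap\St(\overline{\St(g\cdot v)})=\emptyset$.
\item The identification $\ell^{X/G}=(\ell^X)^G$ needs neither freeness nor the star condition. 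The functor $\hom_{\sSet}(-,\ell^\Delta)$ turns the quotient $X\to X/G$ into $G$-fixed points for any action. Freeness is used elsewhere, namely in $\sum_{v\in V}\sum_g\chi_{g\cdot v}=\sum_{w\in X_0}\chi_w=1$.
\item Delete the abandoned idempotent and ``transfer map'' detour.
\item Delete the closing remark about $\kk$-equivalence; it belongs to the corollary, not to this theorem.
\end{itemize}
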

\noindent Here, the star of the vertex $v$, denoted by $\St(v)$, acts as a neighborhood of $v$. The condition $\St(v)\cap \St(\overline{\St(g\cdot v)})=\emptyset$ resembles a properly discontinuous action of $G$. Moreover, $\ell^Y$ denotes the algebra of piecewise polynomial functions on the simplicial set $Y$; see Appendix \ref{sec:appendix} for the precise definition.

Let us prove Theorem \ref{thm:kk} in the particular case $X=Y\times G$, with $Y$ a finite simplicial set and $g\cdot (y, h)= (y, gh)$.
We have
\begin{equation}\label{eq:khal1}\ell^{X/G}=\ell^{(Y\times G)/G}\cong\ell^{Y}\end{equation}
and
\begin{equation}\label{eq:khal2}\ell^{X} \rtimes G=  \ell^{Y\times G} \rtimes G  \cong \ell^{Y} \otimes \ell^{G} \rtimes G\overset{(\star)}{\cong}  \ell^{Y} \otimes M_{G}.\end{equation}
Here, $(\star)$ is induced by $\ell^{G} \rtimes G \cong  M_{G}$, $\chi_{g}\rtimes h \leftrightarrow E_{g,h^{-1}g}$.
Combining \eqref{eq:khal1} and \eqref{eq:khal2} we get an algebra isomorphism
\begin{equation}\label{eq:khal3}
    \ell^{X}\rtimes G\cong \ell^{X/G}\otimes M_{G}.
\end{equation}
If $G$ is a finite group of order $n$, then both $\ell^{X}\rtimes G$ and $\ell^{X/G}$ are unital algebras. Then $M_{G}=M_n$ and \eqref{eq:khal3} implies that $\ell^{X}\rtimes G$ and $\ell^{X/G}$ are Morita equivalent.

As a corollary of Theorem \ref{thm:kk} we get the following.

\begin{coro}[Corollary \ref{coro:kkeq}]
    Let $G$ be a finite group acting freely on a finite simplicial set $X$. Then $\ell^X\rtimes G$ and $\ell^{X/G}$ are $\kk$-equivalent, that is, both algebras are isomorphic in $\kk$.
\end{coro}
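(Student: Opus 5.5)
The plan is to reduce to Theorem \ref{thm:kk} by subdivision, and then pass from Morita equivalence to $\kk$-equivalence using the Morita invariance of $j:\Alg\to\kk$. That invariance follows from $M_2$-stability, as recalled from \cite{cortho}, or equally from the Corollary above with $G$ trivial.

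The first step is to arrange the star condition. The hypothesis $\St(v)\cap \St(\overline{\St(g\cdot v)})=\emptyset$ need not hold for $X$ itself. However, the action of $G$ is free and $G$ is finite, so after passing to an iterated barycentric subdivision $\sd^k X$ (with the induced $G$-action, still free and still finite) the stars of vertices become small. Freeness means $g\cdot v$ is far from $v$ in the combinatorial sense for $g\neq e$. I would fix $k$ large enough, say $k=3$ or $4$, beyond the standard choice making the quotient map behave like a covering, so that the star of $v$, the closed star of $g\cdot v$ and its star are pairwise disjoint as required. Note that $(\sd^k X)/G\cong \sd^k(X/G)$, since subdivision commutes with quotients by free actions; this follows from $\sd$ preserving colimits. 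Theorem \ref{thm:kk} then gives a Morita equivalence between $\ell^{\sd^k(X/G)}$ and $\ell^{\sd^k X}\rtimes G$.

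The second step compares subdivided and unsubdivided algebras in $\kk$. I would use the fact, presumably in the Appendix, that the restriction $\ell^Y\to\ell^{\sd Y}$ (piecewise polynomial functions pulled back along the last vertex map, or the natural inclusion) is a $\kk$-equivalence, indeed a polynomial homotopy equivalence. If this is available equivariantly, it gives $\ell^{X}\rtimes G\simeq \ell^{\sd^k X}\rtimes G$ in $\kk$: the homotopy is $G$-equivariant, so it induces a homotopy on crossed products. It likewise gives $\ell^{X/G}\simeq\ell^{\sd^k(X/G)}$. Composing these with the isomorphism coming from the Morita equivalence yields the corollary.

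The main obstacle is this second step. The last vertex map $\sd Y\to Y$ is not $G$-equivariant in general, so the homotopy equivalence must be produced from something natural. My first attempt would use the natural inclusion $\ell^Y\to\ell^{\sd Y}$, which is equivariant and natural in $Y$, and show it is a $\kk^G$-equivalence via a homotopy inverse given by an equivariant piecewise-linear map. If that fails, I would fall back on showing directly that $X$ itself already satisfies the star condition after a suitable reformulation. Checking the combinatorial smallness claim in the first step is routine by comparison.
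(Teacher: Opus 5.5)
Your outline has the same shape as the paper's proof: subdivide until the star condition holds, apply Theorem~\ref{thm:mainapp}, pass to $\kk$ via $M_2$-stability (Corollary~\ref{cor:gid}), and compare subdivided with unsubdivided algebras. Your first step is correct; any $k\geq 2$ works. It is not routine for simplicial sets, though. The paper uses the level of a simplex of $\sd(X)$, which comes from its unique minimal-pair representation. With it, the paper shows that one subdivision turns a free action into one satisfying condition (A) (Lemma~\ref{lem:freeimpliesA}), and a second subdivision upgrades (A) to (B) (Lemma~\ref{lem:AimpliesB}). Freeness alone does \emph{not} keep $g\cdot v$ away from $v$. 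Let $\mathbb{Z}/2$ swap the two vertices and the two edges of the two-vertex simplicial circle. The action is free, but $v$ and $g\cdot v$ span an edge. So Theorem~\ref{thm:mainapp} does not apply to $X$ itself, and your fallback cannot work in general.

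The genuine gap is the second step, which you leave open and partly misdiagnose.

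First, the last vertex map $\gamma_Y\colon\sd(Y)\to Y$ is natural in $Y$. Under $\sd(Y)\cong Y\otimes\sd$ it is $(\sigma,s)\mapsto(\sigma,\gamma(s))$, induced by the cosimplicial map $S\mapsto\max S$. Hence it \emph{is} $G$-equivariant, and the obstacle you name does not exist.

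Second, your replacement is not available. There is no algebra map $\ell^Y\to\ell^{\sd(Y)}$ coming from the affine identification of subdivided simplices, because writing the barycenter in barycentric coordinates requires inverting integers. That fails for a general ground ring such as $\ell=\mathbb{Z}$.

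Third, you assert that $\ell^Y\to\ell^{\sd(Y)}$ is a polynomial homotopy equivalence, but this is not available for general $Y$. The paper proves it only for $\Delta^n$ and $G/H\times\Delta^n$, by contracting $\sd(\Delta^n)$ onto the barycenter through the final subcategory of interior simplices (Lemma~\ref{lem:sdhtpyequiv}). For arbitrary finite $X$ it only obtains a $\kk^G$-equivalence $\ell^X\to\ell^{\sd(X)}$ (Lemma~\ref{lem:sd}). This goes by induction on dimension: attach equivariant cells $G/H_i\times\Delta^n$ along a pushout, use the Mayer--Vietoris sequences in $\kk^G$ coming from excision, and apply the five lemma.

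Consequently your step ``the equivariant homotopy induces a homotopy on crossed products'' has nothing to act on. The missing ingredient is that $-\rtimes G$ descends to a functor $\kk^G\to\kk$, which yields $\ell^X\rtimes G\sim_{\kk}\ell^{\sd^2(X)}\rtimes G$. The quotient side is Lemma~\ref{lem:sd} with trivial group applied to $X/G$, together with $\sd^2(X/G)=\sd^2(X)/G$.
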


This paper is structured as follows. Sections \ref{sec:Gequiv} and \ref{sec:GGgr} contain the proof of Theorem \ref{mainthm} in the $G$-equivariant and $G$-graded settings, respectively. The Appendix \ref{sec:appendix} contains the proof of Theorem \ref{thm:kk} together with its corollary concerning $\kk$-theory.

\section{G-equivariant case}\label{sec:Gequiv}

\subsection{\texorpdfstring{$G$}{G}-Algebras and equivariant morphisms}
A \emph{$G$-algebra} is an algebra with an action of $G$ by algebra automorphisms. An \emph{equivariant morphism} between $G$-algebras is a $G$-equivariant algebra homomorphism. We write $G\Alg$ for the category of $G$-algebras with equivariant morphisms. A $G$-algebra $A$ is \emph{unital} if $A$ is a unital algebra and $g\cdot 1=1$ for all $g\in G$. We write $\GAlg^u\subseteq \GAlg$ for the subcategory whose objects are unital $G$-algebras and whose morphisms are unital morphisms. 

\subsection{Matrix stability and \texorpdfstring{$G$-}{G-}stability}
Let $S$ be a set and pick $s_0\in S$. Recall that if $A\in\GAlg$, then $M_SA=M_S\otimes A$ denotes the algebra of finitely supported matrices with coefficients in $A$ indexed by $S$. We consider $M_SA$ as a $G$-algebra with the $G$-action given by $g\cdot (E_{s,t}\otimes a)= E_{s,t}\otimes g\cdot a$. The inclusion $\iota_{s_0}:A \to M_S A$, $\iota_{s_0}(a)=E_{s_0,s_0}\otimes a$, is a non-unital equivariant morphism. A functor $F: \GAlg \to \cC$ is called \emph{$M_{S}$-stable} if $F(\iota_{s_0})$ is an isomorphism for any $A\in \GAlg$. It is well known that this does not depend upon the choice of $s_0$; see \cite{friendly}*{Section 2.2}. 

If $S$ is a $G$-set, the $G$-action on $M_SA$ is given by $g\cdot (E_{s,t}\otimes a)=E_{g\cdot s, g\cdot t}\otimes g\cdot a$. To avoid confusions, we write $|S|$ for the underlying set of $S$, so that $G$ acts on $M_{|S|}A$ only on the coefficients while it acts on $M_SA$ both on the coefficients and the indices. Note that the formula $\iota_{s_0}(a)=E_{s_0,s_0}\otimes a$ does not define an equivariant morphism $A\to M_SA$ unless $s_0$ is fixed by $G$.

\begin{lem}[cf. \cite{ekk}*{Rem. 3.1.5}]\label{lem:CDAequiv}
    Let $S$ be a $G$-set. Then the formula
    \[E_{s,t}\otimes E_{g,h}\mapsto E_{g\cdot s,h\cdot t}\otimes E_{g,h}\]
    defines an isomorphism of $G$-algebras $M_{|S|}\otimes M_G\cong M_S\otimes M_G$.
\end{lem}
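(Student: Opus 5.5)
Define the $\ell$-linear map $\varphi\colon M_{|S|}\otimes M_G\to M_S\otimes M_G$ on the basis by $\varphi(E_{s,t}\otimes E_{g,h})=E_{g\cdot s,h\cdot t}\otimes E_{g,h}$. We then check three things in order: that $\varphi$ is bijective, that it is multiplicative, and that it is $G$-equivariant. Here $M_G$ is $M_S$ for the $G$-set $G$ with left multiplication, so $G$ acts on $M_{|S|}\otimes M_G$ by $k\cdot(E_{s,t}\otimes E_{g,h})=E_{s,t}\otimes E_{kg,kh}$. On $M_S\otimes M_G$ it acts diagonally, $k\cdot(E_{s,t}\otimes E_{g,h})=E_{ks,kt}\otimes E_{kg,kh}$.

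\emph{Bijectivity.} Both sides are free $\ell$-modules with bases $\{E_{s,t}\otimes E_{g,h}\}$ indexed by $S\times S\times G\times G$. The map $\varphi$ sends basis elements to basis elements via $(s,t,g,h)\mapsto(g s,h t,g,h)$. This index map has inverse $(s,t,g,h)\mapsto(g^{-1}s,h^{-1}t,g,h)$, so $\varphi$ is an $\ell$-module isomorphism. Its inverse is $E_{s,t}\otimes E_{g,h}\mapsto E_{g^{-1}s,h^{-1}t}\otimes E_{g,h}$.

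\emph{Multiplicativity.} This is the key step, though it is a short computation. In the source, $(E_{s,t}\otimes E_{g,h})(E_{s',t'}\otimes E_{g',h'})=\delta_{t,s'}\delta_{h,g'}E_{s,t'}\otimes E_{g,h'}$. In the target, the product of the images is $\delta_{ht,g's'}\delta_{h,g'}E_{gs,h't'}\otimes E_{g,h'}$.

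When $h=g'$, the condition $ht=g's'$ is equivalent to $t=s'$, because $h$ acts bijectively on $S$. So the two Kronecker factors agree. The surviving term $E_{gs,h't'}\otimes E_{g,h'}$ is exactly $\varphi(E_{s,t'}\otimes E_{g,h'})$. Since both products are bilinear, checking this on basis elements suffices.

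The point to watch is that the twist of the row index uses the row index $g$ of $M_G$ and the twist of the column index uses the column index $h$. The $\delta_{h,g'}$ factor is precisely what makes the middle indices match.

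\emph{Equivariance.} For $k\in G$ we compute
\[
\varphi\bigl(k\cdot(E_{s,t}\otimes E_{g,h})\bigr)=\varphi(E_{s,t}\otimes E_{kg,kh})=E_{kgs,kht}\otimes E_{kg,kh}.
\]
This equals $k\cdot(E_{gs,ht}\otimes E_{g,h})=k\cdot\varphi(E_{s,t}\otimes E_{g,h})$.

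Hence $\varphi$ is an isomorphism of $G$-algebras. No step is a genuine obstacle; the main care needed is in fixing the actions on the two sides consistently with the conventions above.
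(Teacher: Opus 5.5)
Your proposal is correct and follows the paper's approach: the paper's proof is the one line ``It is a straightforward verification,'' and you have carried that verification out. Your checks of bijectivity on bases, of multiplicativity (the deltas agree because $h$ acts bijectively on $S$ once $h=g'$), and of equivariance under the stated actions are exactly what that verification requires.
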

\begin{proof}
    It is a straightforward verification.
\end{proof}

\begin{defi}
Let $\cC$ be a category. A functor $F:\GAlg \to \cC$ is {\emph{$G$-stable}} if for every $G$-algebra $A$ and every injective equivariant function $f : S \to T$ with $\card(T) \leq \card(G)$, the morphism 
$F( f_{*}: M_{S} A \to M_{T}A)$ is an isomorphism in $\cC$.
\end{defi}

\begin{rem}
    There is a slight difference between the definition above and the one given in \cite{ekk}*{Section 3} because we are not requiring a $G$-stable functor to be $M_\N$-stable unless $G$ is infinite.
\end{rem}

Write $G_{+}=G\sqcup\{*\}$ for the disjoint union of $G$ with the trivial one point $G$-set. 
The inclusions $\{*\} \subset G_{+}$ and $G\subset G_{+}$ induce the following zig-zag
of equivariant morphisms, for any G-algebra $A$:
$$
A\xrightarrow[]{\iota_{A}} M_{G_{+}}A \xleftarrow[]{\iota'_{A}}  M_{G}A $$
If $F:\GAlg\to \cC$ is $G$-stable, then $F(\iota_A)$ and $F(\iota'_A)$ are isomorphisms. A partial converse holds by the following lemma.

\begin{lem}\label{lemeq}
    Let $F:\GAlg\to \cC$ be an $M_{|G|}$-stable functor that inverts the morphisms $\iota_A$ and $\iota'_A$ for every $G$-algebra $A$. Then $F$ is $G$-stable.
\end{lem}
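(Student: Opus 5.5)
Given an injective equivariant $f:S\to T$ with $\card(T)\le\card(G)$, I will show that $F(f_*)$ is invertible by reducing to the maps $\iota$ and $\iota'$ after tensoring with $M_G$.

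The first step is to show that $F$ inverts $\iota_{M_SA}:M_SA\to M_{G_+}M_SA$ and the corner map $M_SA\to M_S M_{|G|}A$. The second map is covered directly by $M_{|G|}$-stability, applied to the $G$-algebra $M_SA$. I will then use Lemma \ref{lem:CDAequiv} to untwist indices. The $G$-algebra $M_S\otimes M_G\otimes A$, with the diagonal action on $M_S$, is isomorphic to $M_{|S|}\otimes M_G\otimes A$. Its $G$-structure is that of $M_GA'$ with $A'=M_{|S|}A$, where the action on $A'$ is only on the coefficients.

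Next I compare $M_SA$ with $M_{|S|}A$ via a zig-zag. Each leg is one of the following:
\begin{itemize}
\item a corner inclusion into $M_{|G|}$, inverted by hypothesis;
\item a map of the form $\iota_B$ or $\iota'_B$, inverted by hypothesis;
\item the isomorphism of Lemma \ref{lem:CDAequiv}.
\end{itemize}
Concretely, consider the square
\[
\begin{array}{ccc}
M_SA & \xrightarrow{\iota_{M_SA}} & M_{G_+}M_SA \\
 & & \uparrow{\scriptstyle \iota'_{M_SA}} \\
 & & M_GM_SA\cong M_G M_{|S|}A ,
\end{array}
\]
in which every arrow is inverted by $F$. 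So $F(M_SA)\cong F(M_GM_{|S|}A)$, and the analogous statement holds for $T$.

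The main point is naturality in $f$. Lemma \ref{lem:CDAequiv} is natural in $S$: the map $f_*$ on $M_S\otimes M_G$ corresponds to $M_{|f|}\otimes \id$ on $M_{|S|}\otimes M_G$. Likewise $\iota$ and $\iota'$ are natural with respect to $f_*$. It therefore suffices to show that $F(M_G\otimes M_{|f|}\otimes A)$ is an isomorphism. Here $f$ acts only on indices on which $G$ acts trivially.

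This reduces matters to the non-equivariant statement that an injection $|S|\to|T|$ with $\card(T)\le\card(G)$ induces an $F$-isomorphism $M_{|S|}B\to M_{|T|}B$. This is the main obstacle. I would handle it by composing with a corner inclusion $M_{|T|}B\to M_{|G|}B$ given by an injection $|T|\hookrightarrow|G|$, which exists by the cardinality bound. Picking $s_0\in S$, both composites $B\to M_{|S|}B\to M_{|G|}B$ and $B\to M_{|G|}B$ are corner inclusions. These are $F$-isomorphisms by $M_{|G|}$-stability and its independence of the chosen corner \cite{friendly}*{Section 2.2}.

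It remains to get $B\to M_{|S|}B$ invertible. I will do this by the standard two-out-of-six argument on the chain
\[
B\to M_{|S|}B\to M_{|G|}B\to M_{|G|}M_{|S|}B,
\]
using that $M_{|S|}\otimes M_{|G|}\cong M_{|S|\times|G|}$ and that $|S|\times|G|$ injects into $|G|$ when $G$ is infinite. For finite $G$, I will instead use the retraction argument from the rotation homotopy-free trick of \cite{friendly}: conjugation by permutation matrices acts trivially, which gives $F$-invertibility of $M_{|S|}$-corner maps for $|S|\le|G|$.

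Once this non-equivariant stability holds, applying it with $B=M_GA$ finishes the proof. For $S=\emptyset$ the statement needs separate care: the algebra is $0$, and the case is degenerate.
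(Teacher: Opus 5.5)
Your proposal is the paper's proof: the same ladder $M_SA\to M_{G_+}M_SA\leftarrow M_GM_SA\cong M_{|S|}M_GA$ built from $\iota$, $\iota'$ and Lemma~\ref{lem:CDAequiv}, the same naturality in $f$, and the same reduction of the right-hand column to the triangle of corner inclusions $\iota_{s_0},\iota_{t_0}$ out of $M_GA$. The only addition is that you justify why $M_{|G|}$-stability makes corner inclusions into $M_{|S|}$ and $M_{|T|}$ invertible, which the paper simply asserts. Your chain $B\to M_{|S|}B\xrightarrow{j_*}M_{|G|}B\to M_{|G|}M_{|S|}B$ (with $j:|S|\hookrightarrow|G|$) settles this for every $G$, with no finite/infinite split and no permutation-conjugation argument. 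Take the last map to be induced by the injection $|G|\hookrightarrow|G|\times|S|$ sending $j(s)\mapsto(j(s_0),s)$ and $g\mapsto(g,s_0)$ for $g\notin j(S)$; then both two-step composites are corner inclusions into $M_{|G|}$.
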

\begin{proof}
    Let $f:S\to T$ be an injective equivariant function $f : S \to T$ with $\operatorname{card}(T) \leq \operatorname{card}(G)$ and let $A$ be any algebra. We must show that the left vertical morphism in the diagram below induces an isomorphism upon applying $F$.
    \[
\xymatrix{
    M_{S}\otimes A \ar[r]^-{\id \otimes \iota }\ar[d]_{f_{*}\otimes \id} & M_{S}\otimes M_{G_{+}}\otimes A \ar[d]_{f_{*}\otimes \id} & M_{S} \otimes M_G \otimes A \ar[l]_-{\id\otimes\iota'} \cong  M_{|S|} \otimes M_G \otimes A\ar[d]_{f_{*}\otimes \id} \\
    M_T\otimes A\ar[r]_-{\id\otimes \iota} & M_T\otimes M_{G_+}\otimes A & M_{T} \otimes M_G \otimes A  \cong  M_{|T|} \otimes M_G \otimes A\ar[l]^-{\id\otimes\iota'}
    }   
    \]
    The horizontal morphisms induce isomorphisms upon applying $F$ by hypothesis. The isomorphisms on the right are induced by Lemma \ref{lem:CDAequiv}. The right vertical morphism induces an isomorphism upon applying $F$ since $F$ is $M_{|G|}$-stable. Indeed, fix $s_0\in S$, let $t_0=f(s_0)$ and consider the following triangle:
    \[\xymatrix{M_{|S|} \otimes M_G \otimes A\ar[rr]^-{f_*\otimes \id} & & M_{|T|} \otimes M_G \otimes A \\
    & M_G\otimes A\ar[ul]^-{\iota_{s_0}}\ar[ur]_-{\iota_{t_0}} &}\]
    We know that $F(\iota_{s_0})$ and $F(\iota_{t_0})$ are isomorphisms by $M_{|G|}$-stability and it follows that so is $F(f_*\otimes\id)$.
\end{proof}

\begin{rem} Let $F:\GAlg\to \cC$ be an $M_{\N}$-stable functor that inverts the morphisms $\iota_A$ and $\iota'_A$ for every $G$-algebra $A$. The arguments in the proof of Lemma \ref{lemeq} together with \cite{ekk}*{Example 3.1.3}, \cite{ekk}*{Remark 3.1.5} and \cite{ekk}*{Remark 3.1.6} imply that $F$ is $G$-stable in the sense of \cite{ekk}*{Section 3.1}. In particular, the original definition of a $G$-stable functor given in \cite{ekk}*{Section 3.1} is equivalent to the one given above. See also \cite{arcort}*{Corollary 6.9}.
\end{rem}

\begin{lem}\label{lem:Gcorner}
    Let $R\in\GAlg$ be a unital $G$-algebra, let $m\in\N$ and let $u, v\in R^{m\times 1}$ be such that $u^tv=1_R$. Let $\iota_m:M_G R\to M_G M_mR$ be induced by the upper-left corner inclusion and let $\iota_{u,v}:M_GR\to M_GM_mR$ be the morphism given by
    \[\iota_{u,v}(E_{g,h} \otimes r)=E_{g,h} \otimes (g\cdot v)r(h\cdot u)^t.\]
    If $F:\GAlg\to \cC$ is an $M_2$-stable functor, then $F(\iota_{u,v})=F(\iota_m)$. In particular, $F(\iota_{u,v})$ is an isomorphism. 
\end{lem}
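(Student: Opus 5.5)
The plan is to realize $\iota_{u,v}$ as a ``corner-conjugate'' of $\iota_m$ by $G$-invariant multipliers. Then I would apply the standard fact, going back to \cite{cortho}, that an $M_2$-stable functor cannot distinguish two homomorphisms related in this way.

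Recall the conventions. On $M_GR$ the group $G$ acts on indices and coefficients. On $M_GM_mR$ it acts on the $G$-indices and on the coefficients in $M_mR$, entrywise on $R$. First I will check that $\iota_{u,v}$ is an equivariant algebra homomorphism. Multiplicativity follows from $(h\cdot u)^t(h\cdot v)=h\cdot(u^tv)=1$. Equivariance follows from $g'\cdot\bigl((g\cdot v)r(h\cdot u)^t\bigr)=(g'g\cdot v)(g'\cdot r)(g'h\cdot u)^t$.

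Next, let $e_1\in R^{m\times 1}$ be the first basis vector. Consider the ``diagonal'' elements of the multiplier algebra of $M_GM_mR$
\[
X=\sum_{g\in G}E_{g,g}\otimes (g\cdot v)e_1^t,\qquad Y=\sum_{g\in G}E_{g,g}\otimes e_1(g\cdot u)^t .
\]
When $G$ is infinite these are infinite sums, but they act by left and right multiplication on finitely supported matrices, so they are multipliers. A direct computation gives $\iota_{u,v}(x)=X\,\iota_m(x)\,Y$. Moreover $YX=\sum_g E_{g,g}\otimes E_{1,1}$, so $YX\,\iota_m(x)=\iota_m(x)=\iota_m(x)\,YX$. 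The crucial observation is that $X$ and $Y$ are $G$-invariant multipliers, since $g'\cdot(E_{g,g}\otimes (g\cdot v)e_1^t)=E_{g'g,g'g}\otimes (g'g\cdot v)e_1^t$. Hence everything built from them below is compatible with the action.

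Then I run the usual rotation argument inside $M_2M_GM_mR$, with $G$ acting trivially on the $M_2$ indices. Set $P=YX$ and $Q=XY$, both idempotent multipliers. Put
\[
U=\begin{pmatrix}1-P & Y\\ X & 1-Q\end{pmatrix}.
\]
This is a $G$-invariant multiplier with $U^2=1$. Conjugation by $U$ carries $\iota_1\circ\iota_m$ to $\iota_2\circ\iota_{u,v}$, where $\iota_1,\iota_2$ are the two diagonal corner inclusions. Also $\iota_1$ and $\iota_2$ are conjugate by the invariant swap matrix. The remaining ingredient is that, for an $M_2$-stable functor, conjugation by a $G$-invariant invertible multiplier of $M_2B$ induces the identity on $F(M_2B)$. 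Combined with the fact that $F(\iota_1)=F(\iota_2)$ is invertible, this yields $F(\iota_{u,v})=F(\iota_m)$. The ``in particular'' then follows because $F(\iota_m)$ is an isomorphism by $M_2$-stability: iterating gives $M_m$-stability for the corner inclusion into $M_m$, applied to the algebra $M_GR$.

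The main obstacle is exactly this invariance of $M_2$-stable functors under conjugation by multipliers in the equivariant, non-unital setting. The non-equivariant version is in \cite{cortho} and \cite{friendly}*{Section 2.2}, and its proof only manipulates $2\times 2$ matrices built from the conjugating element. I would first try to transcribe that proof verbatim. The point to check is that every auxiliary map it uses, such as $b\mapsto \bigl(\begin{smallmatrix}b&0\\0&0\end{smallmatrix}\bigr)$ and conjugation by $\bigl(\begin{smallmatrix}V&0\\0&V^{-1}\end{smallmatrix}\bigr)$ or by elementary matrices in $V$, is $G$-equivariant when $V$ is $G$-invariant. That is automatic since the $M_2$ indices carry the trivial action.
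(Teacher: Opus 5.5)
Your proof is correct and is essentially the paper's argument. Your $X$ and $Y$ are $\sum_g E_{g,g}\otimes g\cdot b$ and $\sum_g E_{g,g}\otimes g\cdot a$ for the paper's partial isometries $a=e_1u^t$, $b=ve_1^t$, and the paper's conjugating element $W=\sum_g E_{g,g}\otimes(g\cdot V)$ is exactly your $U$ times the swap matrix, so it returns the upper-left corner to itself and makes your separate step $F(\iota_1)=F(\iota_2)$ unnecessary.

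The invariance under conjugation by $G$-invariant invertibles is exactly the lemma the paper cites from [er]. Its standard proof conjugates by $\mathrm{diag}(1,V)$ and compares the two corner inclusions, not elementary matrices, which is why $M_2$-stability alone suffices.
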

\begin{proof}
    Let $e_1\in R^{m\times 1}$ be the first canonical basis vector, and consider the idempotents $q:=e_1e_1^t$ and $p:=vu^t$ in $M_mR$. Then $q$ and $p$ are Murray-von Neumann equivalent. Indeed, putting $a=e_1u^t$ and $b=ve_1^t$ we have $ab=q$ and $ba=p$. Define $V\in M_{2m}R$ as the block matrix:
    \[V=\begin{pmatrix}
        a & 1-q \\ 1-p & b
    \end{pmatrix}\]
    Using the identities $qa=a=ap$ and $pb=b=bq
    $, it is easily verified that $V$ is invertible with inverse
    \[V^{-1}=\begin{pmatrix}
        b & 1-p \\ 1-q & a
    \end{pmatrix}.\]
    We claim that there is a commutative diagram of $G$-algebras as follows, where $\iota_2$ is the inclusion into the upper-left corner.
    \begin{equation}\label{eq:cornersG}\begin{gathered}\xymatrix{
    M_GR\ar[r]^-{\iota_m}\ar@/_1pc/[dr]_-{\iota_{u,v}} & M_GM_mR\ar[r]^{\iota_2} & M_GM_{2m}R \\
    & M_GM_mR\ar[r]^-{\iota_2} & M_GM_{2m}R\ar[u]_-{\Phi^W}
    }\end{gathered}\end{equation}
    Here, $\Phi^W$ is conjugation by the matrix $W=\sum_{g\in G}E_{g,g}\otimes(g\cdot V)$. Note that if $G$ is infinite, then $W$ has infinitely many non-zero entries and thus $W\not\in M_GM_{2m}R$. This is not a problem, though, because $W$ lies in the bigger \emph{cone algebra} \cite{friendly}*{Section 2.3}, which is a unital $G$-algebra that includes $M_GM_{2m}R$ as a two-sided ideal.
    By a direct computation, proving the commutativity of \eqref{eq:cornersG} amounts to verifying the equality
    \[(g\cdot V)\begin{pmatrix}
        (g\cdot v)r(h\cdot u)^t & 0 \\ 0 & 0 
    \end{pmatrix}(h\cdot V^{-1})=\begin{pmatrix}
        e_1 r e_1^t & 0 \\ 0 & 0 
    \end{pmatrix}\]
    for $r\in R$ and $g,h\in G$.
    Upon applying $F$ to \eqref{eq:cornersG} we get that 
    \[F(\Phi^W)\circ F(\iota_2)\circ F(\iota_{u,v})=F(\iota_2)\circ F(\iota).\]
    As $F(\Phi^W)$ is the identity by \cite{er}*{Lemma 2.12} and $F(\iota_2)$ is invertible by $M_2$-stability, it follows that $F(\iota_{u,v})=F(\iota)$.
\end{proof}

\subsection{\texorpdfstring{$G$}{G}-Morita equivalence}

\begin{defi}\label{defi:GMorita}
    Two unital $G$-algebras $R$ and $S$ are \emph{$G$-Morita equivalent} if there exists a tuple $(R,S,P,Q,\alpha,\beta)$ satisfying the following conditions:
    \begin{itemize}
        \item The sets $P$ and $Q$ are $G$-modules equipped with an $S$-$R$-bimodule structure and an $R$-$S$-bimodule structure, respectively. The actions of $R$ and $S$ on $P$ and $Q$ are moreover compatible with the $G$-actions. That is, we have
        \[
            g\cdot (s\cdot p\cdot r) = (g\cdot s)\cdot (g\cdot p)\cdot (g\cdot r)
        \]
        \[
            g\cdot (r\cdot q\cdot s) = (g\cdot r)\cdot (g\cdot q)\cdot (g\cdot s)
        \]
        for all $r\in R$, $s\in S$, $p\in P$, $q\in Q$, and $g\in G$.

        \item The modules $P\otimes_R Q$ and $Q\otimes_S P$ are equipped with the diagonal $G$-actions:
        \[g\cdot (p\otimes q)=(g\cdot p)\otimes (g\cdot q)\]
        \[g\cdot (q\otimes p)=(g \cdot q)\otimes (g\cdot p)\]
        The maps $\alpha: P\otimes_R Q\to S$ and $\beta: Q\otimes_S P\to R$ are $G$-equivariant bimodule isomorphisms satisfying the mixed associativity relations
        \[
            \alpha(p\otimes q)\cdot p' = p\cdot \beta(q\otimes p')
        \]
        \[
            \beta(q\otimes p)\cdot q' = q\cdot \alpha(p\otimes q')
        \]
        for all $p,p'\in P$ and $q,q'\in Q$.
    \end{itemize}
    We call such a tuple $(R,S,P,Q,\alpha,\beta)$ a \emph{$G$-Morita equivalence}.
\end{defi}

\begin{rem}
    In the particular case where $G=1$, Definition \ref{defi:GMorita} recovers the usual notion of Morita equivalence between unital algebras.
\end{rem}
\begin{rem} If $(R,S,P,Q,\alpha,\beta)$ is a $G$-Morita equivalence, we can consider $P\otimes_R Q$ and $ Q\otimes_S P$ as $G$-algebras with the following products:       
\[
(p\otimes q)(p'\otimes q')=p\cdot \beta(q\otimes p')\otimes q'
\]
\[
(q\otimes p)(q'\otimes p')=q \cdot \alpha(p\otimes q')\otimes p'
\]
With this structure, $\alpha$ and $\beta$ are isomorphisms of $G$-algebras. 
\end{rem}

\begin{defi}
    Let $\cC$ be a category. A functor $F:\GAlg^u\to \cC$ is \emph{$G$-Morita invariant} if $F(R)\cong F(S)$ whenever $R$ and $S$ are $G$-Morita equivalent unital $G$-algebras. A functor $F:\GAlg\to \cC$ is called \emph{$G$-Morita invariant} if its restriction to $\GAlg^u$ is.
\end{defi}

\begin{con}[The corner $G$-algebra]\label{con:GMorita}
Let $(R,S,P,Q,\alpha,\beta)$ be a $G$-Morita equivalence. Since $\alpha: P \otimes_R Q \to S$ is an isomorphism, we may choose $n \in \mathbb{N}$ and elements $p_1, \dots, p_n \in P$ and $q_1, \dots, q_n \in Q$ such that $\alpha(\sum_{i=1}^n p_i \otimes q_i) = 1$. 
For each $g \in G$, define $U_g, V_g \in M_nR$ by
\begin{align*}
    U_g &:= \sum_{i,j} E_{i,j} \otimes \beta(q_i \otimes (g \cdot p_j)), \\
    V_g &:= \sum_{i,j} E_{i,j} \otimes \beta((g \cdot q_i) \otimes p_j).
\end{align*}
Set also $d := U_1 = V_1$. Then the following assertions hold.
\begin{enumerate}[label=(\alph*)]
    \item The matrix $d$ is an idempotent.
    \item We have $dU_g = U_g$, $V_gd = V_g$, $U_g V_g = d$ and $V_g U_g = g \cdot d$ for all $g \in G$.
    \item We have $U_g(g \cdot U_h) = U_{gh}$ and $(g \cdot V_h)V_g = V_{gh}$ for all $g,h \in G$.
    \item Let $M^d_nR$ denote the corner algebra $d(M_nR)d$. The assignment $\theta_g(T) = U_g(g \cdot T)V_g$ defines a $G$-action $\theta$ on the unital algebra $M_n^dR$. We will always consider the corner algebra $M^d_nR$ as a $G$-algebra with the $G$-action $\theta$.
    \item There is an equivariant morphism $\xi: S \to M^d_nR$ defined by
    \[ \xi(s) = \sum_{i,j} E_{i,j} \otimes \beta((q_i \cdot s) \otimes p_j). \]
    \item There is an equivariant morphism $\zeta:M_GM^d_nR\to M_GM_nR$ defined by
    \[\zeta(E_{h,k}\otimes T)=E_{h,k}\otimes V_hTU_k.\]
\end{enumerate}
The verification of all the properties above is straightforward from the definitions. 
\end{con}

\begin{prop}\label{prop:iso_tensorMG}
    Let $F:\GAlg\to \cC$ be an $M_2$-stable functor and let  $R,S\in\GAlg$ be unital $G$-algebras. If $R$ and $S$ are $G$-Morita equivalent, then \[F(M_GR)\cong F(M_GS).\]
\end{prop}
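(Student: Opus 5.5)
The plan is to factor the comparison through the corner $G$-algebra of Construction \ref{con:GMorita}. I will show that $F(M_GS)\cong F(M_GM^d_nR)$ and that $F(M_GM^d_nR)\cong F(M_GR)$.

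For the first isomorphism, I claim that $\xi:S\to M^d_nR$ is an isomorphism of $G$-algebras. It is equivariant by (e). It is unital, since $\xi(1)=\sum E_{i,j}\otimes\beta(q_i\otimes p_j)=d$. Multiplicativity and the inverse are the classical Morita computations. The inverse sends $T=\sum E_{i,j}\otimes t_{ij}\in M^d_nR$ to $\sum_{i,j}\alpha(p_i t_{ij}\otimes q_j)$. One checks $\xi^{-1}\xi=\id$ using $\sum_i\alpha(p_i\otimes q_i)=1$ together with mixed associativity. One checks $\xi\xi^{-1}=\id$ using $dTd=T$ and mixed associativity again. Hence $M_GS\cong M_GM^d_nR$ as $G$-algebras, and applying $F$ gives the first isomorphism.

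For the second isomorphism I use the morphism $\zeta$ of (f), which maps into $M_GM_nR$. By $M_2$-stability and Lemma \ref{lem:Gcorner}, applied with $u=v=e_1$, the map $F(M_GR)\to F(M_GM_nR)$ induced by the corner inclusion is an isomorphism. It therefore suffices to produce a morphism $M_GR\to M_GM^d_nR$ whose composite with $\zeta$ has the form $\iota_{u,v}$, and then to show that $F(\zeta)$ is also an isomorphism.

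The natural candidate is the $R$-side analogue of $\xi$. Using the Morita context in the other direction, choose $q'_k\in Q$ and $p'_k\in P$ with $\beta(\sum_k q'_k\otimes p'_k)=1_R$. Define $u,v\in R^{n\times 1}$ by pairing these elements against the $q_i$ and $p_j$ through $\beta$, so that $u^tv=1$ and $vu^t$ is related to $d$. The composite $\zeta\circ\eta$, where $\eta$ is the corresponding map $M_GR\to M_GM^d_nR$, then becomes $E_{g,h}\otimes r\mapsto E_{g,h}\otimes V_g(\cdots)U_h$. The twisting by $V_g$ and $U_h$ is exactly what should convert the constant vectors into $g\cdot v$ and $h\cdot u$, as in Lemma \ref{lem:Gcorner}. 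That lemma then gives that $F(\zeta\circ\eta)$ is invertible.

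Symmetrically, I would compose in the other order, or apply the same argument to the inverse Morita equivalence $(S,R,Q,P,\beta,\alpha)$. This shows that $F(\zeta)$ has a left inverse and a right inverse, so $F(\zeta)$ is an isomorphism. Combining, $F(M_GR)\cong F(M_GM_nR)\cong F(M_GM^d_nR)\cong F(M_GS)$.

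The main obstacle is the bookkeeping in this last step. I need to choose $u,v$ and the map $\eta$ so that $\zeta\circ\eta$ is literally of the form $\iota_{u,v}$, using (b) and (c) to rewrite $V_g\,(\text{constant})\,U_h$ as $(g\cdot v)r(h\cdot u)^t$. If that identification fails as stated, the fallback is a direct argument. In that argument I would show that $\zeta$ is conjugation by a partial isometry $\sum_g E_{g,g}\otimes V_g$ in the cone algebra, and then run the $2\times 2$ rotation trick from the proof of Lemma \ref{lem:Gcorner} with $\Phi^W$, to identify $F(\zeta)$ with the $F$-image of a corner inclusion.
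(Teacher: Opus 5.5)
Your first step is correct, and the paper never needs it: $\xi$ is a unital $G$-equivariant isomorphism $S\cong M^d_nR$ for the action $\theta$, with the inverse you give. The gap is in the second step, the invertibility of $F(\zeta)$.

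\emph{The proposed $\eta$ cannot exist.} A map $\eta\colon M_GR\to M_GM^d_nR$ with $\zeta\circ\eta=\iota_{u,v}$ does not exist in general. Since $M^d_nR\cong S$, it would be a map $M_GR\to M_GS$. It would also have to be injective, because $\iota_{u,v}$ is: multiply the $(g,h)$ entry by $(g\cdot u)^t$ on the left and by $h\cdot v$ on the right to recover $r$. Take $G=1$, $S=k$ a field, $R=M_2(k)$, $P=k^{1\times 2}$, $Q=k^{2\times 1}$. Then $M^d_nR\cong k$, so you would need an injective homomorphism $M_2(k)\to k$.

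Your construction of $u,v$ shows the same problem. Pairing $\tilde q_k,\tilde p_k$ ($1\le k\le\tilde n$) against the $p_j,q_i$ produces a double index, so the vectors live in $R^{n\tilde n\times 1}$. Hence $\eta$ must land in a further $M_{\tilde n}$-stabilization. Even after that repair you only learn that $F(\zeta)$ is a split epimorphism.

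\emph{The left inverse is missing.} The composite $\eta\circ\zeta$ does not typecheck. Running the argument for $(S,R,Q,P,\beta,\alpha)$ tells you something about $\tilde\zeta$, not $\zeta$. Split epimorphisms in both directions do not give an isomorphism in an arbitrary $\cC$.

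\emph{The fallback does not close the gap.} For the twisted action $\theta$, the plain inclusion of $M_GM^d_nR$ as the $1\otimes d$ corner of $M_GM_nR$ is not equivariant in general. More fundamentally, an argument that never uses fullness, i.e. $\beta(\sum_k\tilde q_k\otimes\tilde p_k)=1_R$, cannot make a corner inclusion invertible. Compare $K_0$ of the non-full corner inclusion $k\to k\times k$.

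\emph{What the paper does instead.} It compares round trips rather than inverting $\zeta$ alone. It sets $\Phi=(\iota_n)_*^{-1}\zeta_*\xi_*$ and builds $\Psi$ in the same way from the reverse context, so $\Psi$ passes through matrices over $S$. It then computes the composite $M_GS\to M_GM_nR\to M_GM_nM_{\tilde n}S$ explicitly:
$E_{g,h}\otimes s\mapsto E_{g,h}\otimes\sum E_{i,j}\otimes E_{k,l}\otimes(g\cdot\alpha(\tilde p_k\otimes q_i))\,s\,(h\cdot\alpha(p_j\otimes\tilde q_l))$.
This is $\iota_{u,v}$ with $m=n\tilde n$, $v_{(i,k)}=\alpha(\tilde p_k\otimes q_i)$ and $u_{(j,l)}=\alpha(p_j\otimes\tilde q_l)$, with entries in $S$. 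By mixed associativity,
$u^tv=\sum_{j,l}\alpha(p_j\otimes\tilde q_l)\alpha(\tilde p_l\otimes q_j)=\sum_j\alpha(p_j\otimes q_j)=1_S$.
This is exactly where fullness on both sides enters. Lemma~\ref{lem:Gcorner} then gives $\Psi\Phi=\id$, and symmetry gives $\Phi\Psi=\id$. A correct version of your plan would have to carry out this computation in both orders, and at that point the separate isomorphism $\xi$ is no longer needed.
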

\begin{proof}
    Let $(R,S,P,Q,\alpha,\beta)$ be a $G$-Morita equivalence and define $d$, $\theta$, $\xi$, and $\zeta$ as in Construction \ref{con:GMorita}. 
    Let $\Phi:F(M_GS)\to F(M_GR)$ be the following composite, where the rightmost arrow is the inverse of the morphism induced by the upper-left-corner inclusion $\iota_n$.
\[\xymatrix@C=2em{F(M_GS)\ar@/_2pc/[rrr]^-{\Phi}\ar[r]^-{\xi_*} & F(M_GM^d_nR)\ar[r]^-{\zeta_*} & F(M_GM_nR)\ar[r]_-{\cong}^-{(\iota_n)_*^{-1}} & F(M_GR)}\]
We will show that $\Phi$ is an isomorphism. By symmetry, since $(S,R,Q,P,\beta,\alpha)$ is also a $G$-Morita equivalence, we may find $\t{n}\in\N$, $\t{p}_1,\dots, \t{p}_\t{n}\in P$ and $\t{q}_1,\dots, \t{q}_\t{n}\in Q$ such that $\beta(\sum_{i=1}^{\t{n}}\t{q}_i\otimes\t{p}_i)=1$. By Construction \ref{con:GMorita} we get an idempotent $\t{d}\in M_\t{n}S$, a corner $G$-algebra $M^\t{d}_\t{n}S$ and morphisms of $G$-algebras $\t{\xi}:R\to M^\t{d}_\t{n}S$ and $\t{\zeta}:M_GM^\t{d}_\t{n}S\to M_GM_\t{n}S$. Let $\Psi:F(M_GR)\to F(M_GS)$ be the following composite.
\[\xymatrix@C=2em{F(M_GR)\ar@/_2pc/[rrr]^-{\Psi}\ar[r]^-{\t{\xi}_*} & F(M_GM^\t{d}_\t{n}S)\ar[r]^-{\t{\zeta}_*} & F(M_GM_\t{n}S)\ar[r]_-{\cong}^-{(\iota_\t{n})_*^{-1}} & F(M_GS)}\]
We claim that $\Psi$ is the inverse of $\Phi$. Indeed, consider the following commutative diagram:
\[\xymatrix@C=5em{F(M_GS)\ar@/_1pc/[dr]_-{\Phi}\ar[r]^-{(\zeta\circ \xi)_*} & F(M_GM_nR)\ar[r]^-{(\t{\zeta}\circ \t{\xi})_*} & F(M_GM_nM_\t{n}S)\\
& F(M_GR)\ar@/_1pc/[dr]_-{\Psi}\ar[r]^-{(\t{\zeta}\circ\t{\xi})_*}\ar[u]^-{\cong}_-{(\iota_n)_*} & F(M_GM_\t{n}S)\ar[u]^-{\cong}_-{(\iota_n)_*} \\
& & F(M_GS)\ar[u]^-{\cong}_-{(\iota_\t{n})_*}
}\]
To prove that $\Psi\circ\Phi=\id$ it suffices to show that the composite of the morphisms in the top row equals the morphism induced by the upper-left-corner inclusion. Let $H:M_GS\to M_GM_nM_\t{n}S$ be the equivariant morphism inducing the composite of the morphisms in the top row. A direct computation shows that $H$ is given by the formula
\[H(E_{g,h} \otimes s)=  E_{g,h} \otimes \sum_{i,j=1}^n\sum_{k,l=1}^\t{n}E_{i,j}\otimes E_{k,l}\otimes (g\cdot \alpha(\t{p}_k\otimes q_i))s(h\cdot \alpha(p_j\otimes \t{q}_l)).\]
Thus, upon applying $F$, the morphism $H:M_GS\to M_GM_nM_\t{n}S$  induces the same morphism as the upper-left-corner inclusion by Lemma \ref{lem:Gcorner} (take $m=n\t{n}$ and identify $M_m\cong M_n\otimes M_\t{n}$). This proves that $\Psi\circ\Phi=\id$. The fact that $\Phi\circ\Psi=\id$ follows by symmetry.
\end{proof}

\begin{coro}[\cite{cortho}*{Remark 5.1.3}]\label{cor:gid}
    Let $F:\Alg\to\cC$ be a functor. If $F$ is $M_2$-stable, then $F$ is Morita invariant.
\end{coro}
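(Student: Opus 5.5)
The plan is to specialize Proposition \ref{prop:iso_tensorMG} to the trivial group $G=1$ and then remove the factor $M_G$. With $G=1$ the category $\GAlg$ is just $\Alg$, every unital algebra is a unital $G$-algebra with the trivial action, and a $G$-Morita equivalence is exactly an ordinary Morita equivalence by the remark following Definition \ref{defi:GMorita}. So, given $F:\Alg\to\cC$ that is $M_2$-stable and unital algebras $R$ and $S$ that are Morita equivalent, Proposition \ref{prop:iso_tensorMG} will give an isomorphism $F(M_GR)\cong F(M_GS)$.

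Next I identify $M_GA$ with $A$ when $G=1$. Here $M_G=M_{\{e\}}\cong\ell$ via $E_{e,e}\mapsto 1$, so $M_GA=M_1A\cong A$ naturally in $A$. Thus $F(M_GR)\cong F(R)$ and $F(M_GS)\cong F(S)$, and therefore $F(R)\cong F(S)$. Since this holds for all Morita equivalent unital algebras, the restriction of $F$ to $\Alg^u$ is Morita invariant, which is the statement.

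The only point I would double-check is that nothing in the proof of Proposition \ref{prop:iso_tensorMG} quietly needs more than $M_2$-stability when $G$ is trivial. That proof inverts $F(\iota_n)$ and $F(\iota_{\t n})$, the corner inclusions $M_GR\to M_GM_nR$, and it also uses Lemma \ref{lem:Gcorner}. Both of these need $M_n$-stability for arbitrary $n$, not only $n=2$. I would obtain this from $M_2$-stability by the standard argument cited there (\cite{friendly}), or, when $G=1$, directly from Lemma \ref{lem:Gcorner}. Since $M_{2^k}\cong M_2^{\otimes k}$, iterating the upper-left corner inclusion shows that $F$ is $M_{2^k}$-stable for every $k$. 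For general $n\le 2^k$, the corner maps $A\to M_nA\to M_{2^k}A$ compose to the corner map $A\to M_{2^k}A$, which is inverted. One then needs the second map $M_nA\to M_{2^k}A$ to be inverted as well, for instance by two-out-of-three together with the conjugation invariance $F(\Phi^W)=\id$ from \cite{er}. I expect this reduction to be the main (though routine) obstacle. Since the proposition is stated as given for $M_2$-stable functors, I may simply take it as established and regard the corollary as immediate.
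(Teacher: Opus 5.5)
Your proposal is correct and is exactly the paper's proof, which consists of the single line ``put $G=1$ in Proposition~\ref{prop:iso_tensorMG}''; the identification $M_{\{e\}}A\cong A$ then removes the matrix factor. Your side concern is legitimate but applies equally to the paper: the proof of Proposition~\ref{prop:iso_tensorMG} inverts $F(\iota_n)$ while assuming only $M_2$-stability, so it tacitly uses the standard fact that $M_2$-stability implies $M_n$-stability for all $n$ (cf.\ \cite{friendly}), and taking the proposition as established is precisely what the paper does.
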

\begin{proof}
    Put $G=1$ in Proposition \ref{prop:iso_tensorMG}.
\end{proof}

\begin{coro}
    Let $F:\GAlg\to\cC$ be a functor. If $F$ is $G$-stable, then $F$ is $G$-Morita invariant.
\end{coro}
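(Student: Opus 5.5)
The plan is to reduce to Proposition \ref{prop:iso_tensorMG} by showing that a $G$-stable functor identifies $F(R)$ with $F(M_GR)$ for every $G$-algebra $R$, naturally enough to combine with that proposition. Let $R$ and $S$ be $G$-Morita equivalent unital $G$-algebras. The composite isomorphism will be
\[F(R)\cong F(M_GR)\cong F(M_GS)\cong F(S),\]
with the middle isomorphism supplied by Proposition \ref{prop:iso_tensorMG}. The two outer isomorphisms should come from the zig-zag $A\xrightarrow{\iota_A} M_{G_+}A\xleftarrow{\iota'_A} M_GA$.

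\textbf{Step 1: $F$ is $M_2$-stable.} View $\{1,2\}$ as a $G$-set with trivial action. The inclusion $\{1\}\subset\{1,2\}$ is an injective equivariant function, and $M_{\{1\}}A=A$. Provided $\card(\{1,2\})\leq\card(G)$, $G$-stability makes $F(A\to M_2A)$ an isomorphism; here $M_2A$ carries the action only on coefficients, as required in Proposition \ref{prop:iso_tensorMG}. If $G$ is trivial, then $\card(G)=1$ and this argument fails. In that case I would instead take $\{1\}$ itself and observe that Proposition \ref{prop:iso_tensorMG} with $G=1$ needs $M_2$-stability genuinely, so this case needs separate care.

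\textbf{Step 2: $F(A)\cong F(M_GA)$.} The inclusions $\{*\}\subset G_+$ and $G\subset G_+$ are injective equivariant maps. Whenever $\card(G_+)\leq\card(G)$, which holds for infinite $G$, $G$-stability shows that $F(\iota_A)$ and $F(\iota'_A)$ are isomorphisms. This gives $F(A)\cong F(M_GA)$ via $F(\iota'_A)^{-1}F(\iota_A)$.

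\textbf{Main obstacle: the cardinality hypothesis for small $G$.} For finite $G$, $\card(G_+)=\card(G)+1>\card(G)$, so Step 2 as stated breaks. The same issue already affected Step 1 for $G=1$. For finite $G$ of order $n\geq 2$, I would use trivial-action sets of size at most $n$ to get $M_k$-stability for all $k\leq n$, and hence $M_2$-stability, which iterates to $M_m$-stability for all $m$ by the usual corner arguments. To compare $F(A)$ with $F(M_GA)$, I would use the triangle in the proof of Lemma \ref{lemeq} together with Lemma \ref{lem:CDAequiv}. Concretely, $M_{|G|}\otimes M_G\cong M_G\otimes M_G$, and the corner inclusions give $F(M_GA)\cong F(M_{|G|}M_GA)\cong F(M_GM_GA)$. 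I would also aim to compare $F(A)$ with $F(M_GA)$ directly through a $G$-set of size at most $n$ containing a fixed point, by embedding $\{*\}$ into $G$ only when $G$ has a fixed point, which it does not.

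I therefore expect the real work to be deciding whether the intended definition allows $\card(T)\leq\card(G_+)$, perhaps via the Remark comparing with \cite{ekk}. If not, I would fall back on proving $F(R)\cong F(M_GR)$ for unital $R$ by a $G$-Morita equivalence between $R$ and $M_GR$ with a twisted action. In that fallback, $M_GR$ itself (action on indices) is $G$-Morita equivalent to $R$ via $P=R^{G}$, in the spirit of Proposition \ref{prop:iso_tensorMG}, and only needs the $M_{|G|}$-stability available from Step 1.
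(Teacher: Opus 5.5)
Your overall argument is the paper's own. You use the chain $F(R)\cong F(M_GR)\cong F(M_GS)\cong F(S)$, with the middle isomorphism from Proposition~\ref{prop:iso_tensorMG} and the outer ones from the zig-zag $A\to M_{G_+}A\leftarrow M_GA$. The paper asserts, just before Lemma~\ref{lemeq}, that a $G$-stable functor inverts $\iota_A$ and $\iota'_A$, and its proof of the corollary says the outer isomorphisms hold ``by $G$-stability''. The cardinality problem you flag is genuine and the paper glosses over it. It cannot be repaired by a cleverer argument, because with the literal bound $\card(T)\le\card(G)$ (and $S\neq\emptyset$) the statement is false:
\begin{itemize}
\item For $G=1$ the only admissible $f$ are bijections, so every functor is $G$-stable. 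But the identity functor $\Alg\to\Alg$ is not Morita invariant, since $\ell\not\cong M_2\ell$ for $\ell\neq0$.
\item For $G=\mathbb{Z}/2$ the only non-bijective admissible $f$ are the inclusions of a point into a two-point set with trivial action. Hence $F(A)=K_0(A^G)$ is $G$-stable, because $(M_2A)^G=M_2(A^G)$; it is even $M_k$-stable for every $k$. Now $\ell$ and $M_G\ell$ (action on indices) are $G$-Morita equivalent via $P=\bigoplus_{s\in G}\ell e_s$ with $g\cdot e_s=e_{gs}$ and $Q$ the corresponding row vectors. Yet for $\ell=\mathbb{C}$ one gets $F(\ell)=\mathbb{Z}$ but $F(M_G\ell)=K_0(\mathbb{C}[\mathbb{Z}/2])\cong\mathbb{Z}^2$.
\end{itemize}

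This also shows that your fallback cannot work. Feeding a $G$-Morita equivalence $R\sim M_GR$ into Proposition~\ref{prop:iso_tensorMG} only yields $F(M_GR)\cong F(M_GM_GR)$. Likewise, the triangle from Lemma~\ref{lemeq} combined with Lemma~\ref{lem:CDAequiv} only relates objects that already carry an index-acting $M_G$ factor. Stripping that factor off is exactly what needs the $G_+$ zig-zag, and deriving it from Morita invariance would be circular. The $\mathbb{Z}/2$ example above is $M_{|G|}$-stable but has $F(\ell)\not\cong F(M_G\ell)$, so no amount of ordinary matrix stability suffices. The missing step in your write-up is therefore a commitment rather than an argument: adopt your first option and read the bound so that $G_+$ is admissible. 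For instance, take $\card(T)\le\card(G_+)$, or the bound of \cite{ekk}; this is what the paper tacitly uses. Then Step 1 holds for every $G$ (including $G=1$, via $\{1\}\subset\{1,2\}$), Step 2 holds for every $G$, and your proof is complete and coincides with the paper's.
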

\begin{proof}
    Let $R$ and $S$ be unital $G$-algebras. If $R$ and $S$ are $G$-Morita equivalent, we have
    \[F(R)\cong F(M_GR)\cong F(M_GS)\cong F(S).\]
    Indeed, the isomorphism in the middle holds by Proposition \ref{prop:iso_tensorMG} and the other two isomorphisms hold by $G$-stability.
\end{proof}

\section{\texorpdfstring{$G$-Graded}{G-Graded} case}\label{sec:GGgr}

\subsection{\texorpdfstring{$G$}{G}-Graded algebras and homogeneous morphisms}
A \emph{$G$-graded algebra} is an algebra $A\in\Alg$ with a family of submodules 
$\{A_g\}_{g\in G}$ such that
$$
A = \bigoplus_{g\in G} A_{g}, \qquad A_{g} A_{h} \subseteq A_{gh} \qquad (g, h \in G). 
$$
We write $|a|=g$  if $a \in A_{g}$ and call $a$ \emph{homogeneous of degree} $g$. An {\emph{homogeneous morphism}} between $G$-graded algebras is an algebra homomorphism $f:A\rightarrow B$ such that $f(A_{g}) \subseteq B_{g}$, $\forall g\in G$. We write $\GrAlg$ for the category of $G$-graded algebras with homogeneous morphisms. A $G$-graded algebra $A$ is \emph{unital} if $A$ is a unital algebra and $|1|=e$. We write $\GrAlg^u\subseteq \GrAlg$ for the subcategory whose objects are unital $G$-graded algebras and whose morphisms are unital homogeneous morphisms.

\subsection{Matrix stability}\label{sec:grmatrixst}
Let $S$ be a set, pick $s_0\in S$ and let $A\in\GrAlg$. Recall that $M_S$ denotes the algebra of finitely supported matrices with coefficients in $\ell$ indexed by $S$. We write $M_SA$ for the algebra $M_S\otimes A$ with the grading given by $|E_{s,t}\otimes a|= |a|$ for homogeneous $a$. The inclusion $\iota_{s_0}:A \to M_S A$, $\iota_{s_0}(a)=E_{s_0,s_0}\otimes a$, is a non-unital homogeneous morphism. A functor $F: \GrAlg \to \cC$ is called \emph{$M_{S}$-stable} if $F(\iota_{s_0})$ is an isomorphism for any $A\in \GrAlg$. It is well known that this does not depend upon the choice of $s_0$; see \cite{friendly}*{Section 2.2}. 

\begin{lem}[cf. \cite{friendly}*{Proposition 2.2.6}]\label{lem:M2conjugategr}
    Let $F:\GrAlg\to\cC$ be an $M_2$-stable functor and let $A$ be a $G$-graded algebra. Suppose that $A$ is a subalgebra of a not-necessarily-graded unital algebra $B$. Let $V\in B$ be an invertible element such that $VA\subseteq A$, $AV^{-1}\subseteq A$ and $|VaV^{-1}|=|a|$ for every homogeneous $a\in A$. Then
    \[\Phi^V(a)=VaV^{-1}\]
    defines an homogeneous morphism $\Phi^V:A\to A$ and $F(\Phi^V)=\id_{F(A)}$.    
\end{lem}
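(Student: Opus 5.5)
The plan is to reduce the statement to one fact about $M_2$-stable functors: two corner embeddings of $A$ into a matrix algebra induce the same morphism under $F$ whenever one is conjugate to the other by a sufficiently well-behaved invertible matrix. I would follow the classical argument of \cite{cortho} (see also \cite{friendly}*{Proposition 2.2.6}) and adapt it to keep track of degrees.

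\emph{Step 1: $\Phi^V$ is a homogeneous morphism.} For $a\in A$ we have $VaV^{-1}=V(aV^{-1})$. Since $AV^{-1}\subseteq A$ we get $aV^{-1}\in A$, and since $VA\subseteq A$ we get $VaV^{-1}\in A$. Multiplicativity of conjugation is clear. The hypothesis $|VaV^{-1}|=|a|$ for homogeneous $a$ gives $\Phi^V(A_g)\subseteq A_g$. So $\Phi^V$ is a morphism in $\GrAlg$.

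\emph{Step 2: the key identity.} Let $\iota_1,\iota_2:A\to M_2A$ be the upper-left and lower-right corner embeddings. Both are homogeneous, since $|E_{i,i}\otimes a|=|a|$, and $F(\iota_1)$ is invertible by $M_2$-stability. The goal is
\[
F(\iota_1)\circ F(\Phi^V)=F(\iota_1).
\]
From this, $F(\Phi^V)=\id_{F(A)}$ follows at once. The idea is to connect both $\iota_1\circ\Phi^V$ and $\iota_1$ to $\iota_2$ by conjugations:
\[
\iota_1\circ\Phi^V = \mathrm{ad}(R_V)\circ\iota_2,\qquad R_V=\begin{pmatrix}0&V\\ V^{-1}&0\end{pmatrix},
\]
\[
\iota_1=\mathrm{ad}(R_1)\circ\iota_2,\qquad R_1=\begin{pmatrix}0&1\\ 1&0\end{pmatrix}.
\]
Both matrices lie in $M_2\tilde B$, where $\tilde B$ is the unital algebra containing $B$. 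I would then show $F(\iota_1\circ\Phi^V)=F(\iota_2)=F(\iota_1)$.

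\emph{Step 3 (main obstacle).} Conjugation by $R_V$ does not preserve all of $M_2A$, nor the grading. For instance, the entries $VaV$ and $V^{-1}aV^{-1}$ need not lie in $A$, and their degrees are not controlled. So $\mathrm{ad}(R_V)$ cannot be treated as a self-map of $M_2A$ to which $F$ is applied directly.

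What I would try first is to never apply $F$ to the full conjugation, only to composites with corner embeddings, which are honest homogeneous maps. Concretely, I would pass to $M_2M_2A$ and run the standard rotation trick there. Consider the two homogeneous maps $A\to M_2M_2A$ obtained by placing $\iota_1(\Phi^V(a))$, respectively $\iota_2(a)$, in the upper-left block. These differ by conjugation by the block matrix $\mathrm{diag}(R_V,R_V^{-1})$, and I would try to write this as a product of elementary block matrices, each of which maps the relevant image into $M_2M_2A$ with degrees preserved. Each such elementary step should then be handled by $M_2$-stability through the equality $F(\iota_1)=F(\iota_2)$, after checking degrees using only $|VaV^{-1}|=|a|$ together with $VA\subseteq A$ and $AV^{-1}\subseteq A$. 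If this factorization fails to respect the grading, the fallback is to redo the argument of \cite{friendly}*{Proposition 2.2.6} verbatim, checking at each stage that every map to which $F$ is applied is a homogeneous morphism between graded algebras. This degree bookkeeping is where I expect all the difficulty to lie.
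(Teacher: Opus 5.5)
Your Step~3, which is where the proof has to happen, is only a plan: nothing in it establishes $F(\iota_1\circ\Phi^V)=F(\iota_1)$. Routing through $R_V$ creates the problem you then try to repair. Factoring $\mathrm{diag}(R_V,R_V^{-1})$ into elementary matrices does not help either, since conjugating by $\left(\begin{smallmatrix}1&X\\0&1\end{smallmatrix}\right)$ with $X=R_V$ again produces terms $XcX$, i.e.\ $VcV$ and $V^{-1}cV^{-1}$. The missing idea is to conjugate by $W=\mathrm{diag}(V,1)\in M_2B$ instead:
\[
W\begin{pmatrix} a & b\\ c & d\end{pmatrix}W^{-1}=\begin{pmatrix} VaV^{-1} & Vb\\ cV^{-1} & d\end{pmatrix}.
\]
This lies in $M_2A$ precisely because $VA\subseteq A$ and $AV^{-1}\subseteq A$; the hypotheses are tailored to this matrix. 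Since $\mathrm{ad}(W)\circ\iota_2=\iota_2$ and $F(\iota_2)$ is invertible, $F(\mathrm{ad}(W))=\id$. Since $\mathrm{ad}(W)\circ\iota_1=\iota_1\circ\Phi^V$, you get $F(\iota_1)\circ F(\Phi^V)=F(\iota_1)$, hence $F(\Phi^V)=\id$. No $R_1$ and no passage to $M_2M_2A$ is needed.

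Second, the degree bookkeeping you intend to do ``using only $|VaV^{-1}|=|a|$'' cannot succeed by any route. The map $\mathrm{ad}(W)$ is homogeneous only if $|Vb|=|b|$ and $|cV^{-1}|=|c|$ for homogeneous $b,c$, and this does not follow from $|VaV^{-1}|=|a|$. In fact the statement is false under the stated hypothesis:
\begin{itemize}
\item Let $G=\{e,g\}$ have order $2$, and let $A=B=M_2(\ell)$ with $E_{11},E_{22}$ of degree $e$ and $E_{12},E_{21}$ of degree $g$.
\item Let $V=E_{12}+E_{21}=V^{-1}$. As $V$ is homogeneous of degree $g$, all hypotheses hold.
\item The functor $F(C)=K_0(C_e)$ is $M_2$-stable on $\GrAlg$, because $(M_2C)_e=M_2(C_e)$ and $\iota_1$ restricts to the corner inclusion.
\item But $\Phi^V$ swaps $E_{11}$ and $E_{22}$, so $F(\Phi^V)$ is the flip of $K_0(\ell\times\ell)=K_0(\ell)\oplus K_0(\ell)$, which is not the identity for $\ell\neq 0$. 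Note that $VE_{11}=E_{21}$ changes degree.
\end{itemize}
The correct hypothesis is $|Va|=|a|=|aV^{-1}|$ for homogeneous $a$. Under it, the $\mathrm{diag}(V,1)$ argument goes through with every map homogeneous. The paper's one-line proof tacitly needs this stronger condition too. It is exactly what is checked where the lemma is applied, in the proof of Lemma~\ref{lem:Gcornergr}, namely $|Vx|=|xV^{-1}|=|x|$.
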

\begin{proof}
    The proof of \cite{friendly}*{Proposition 2.2.6} can be carried on almost verbatim in this $G$-graded setting.
\end{proof}

\subsection{\texorpdfstring{$G$-Graded stability}{G-Graded stability}} Let $A$ be a $G$-graded algebra. Recall that the algebra $M_G(A)=M_G\otimes A$ is a $G$-graded algebra with the grading defined by $|E_{g,h}\otimes a|=g|a|h^{-1}$ for homogeneous $a$. This construction can be generalized as follows.

\begin{defi}
    A \emph{$G$-graded set} consists of a set $S$ endowed a function $\phi:S\to G$. Let $\phi:S\to G$ and $\psi:T\to G$ be $G$-graded sets. A \emph{morphism} from $S$ to $T$ is a function $f:S\to T$ such that $\psi\circ f=\phi$.
\end{defi}

\begin{defi}
    Let $\phi: S \to G$ be a $G$-graded set and let $A$ be a $G$-graded algebra. Then $M_S\otimes A$ is a $G$-graded algebra with the grading defined by
    \[|E_{s,t}\otimes a|=\phi(s)|a|\phi(t)^{-1}.\]
    We write $M_S(A)$ for the $G$-graded algebra defined this way. 
    When considering $G$ as a $G$-graded set, we use the grading induced by $\phi=\id_G$.
\end{defi}

\begin{rem}\label{rem:grmatrices}
    Let $\phi:S\to G$ and $\psi:T\to G$ be $G$-graded sets and let $f:S\to T$ be an injective morphism of $G$-graded sets. For any $A\in\GrAlg$ we have an induced homogeneous morphism $f_*:M_S(A)\to M_T(A)$, $f_*(E_{s,t}\otimes a)=E_{f(s),f(t)}\otimes a$.
\end{rem}

\begin{rem}
Any $G$-graded set $S$ has an underlying set denoted by $|S|$. Thus, the grading in $M_{|S|}A$ is given by $|E_{s,t}\otimes a|=|a|$, as defined in section \ref{sec:grmatrixst}.
\end{rem}

\begin{lem}[cf. \cite{ekk}*{Prop. 3.2.2}]\label{lem:CDAgraduada}
    Let $\phi:S\to G$ be a $G$-graded set and let $A$ be a $G$-graded algebra. Then the formula
    \[E_{g,h}\otimes E_{s,t}\otimes a\mapsto E_{s,t}\otimes E_{g\phi(s), h\phi(t)}\otimes a\]
    defines an isomorphism of $G$-graded algebras $M_G(M_S(A))\to M_{|S|}M_G(A)$.
\end{lem}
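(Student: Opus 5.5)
The plan is to check directly that the stated assignment, extended $\ell$-linearly, is a well-defined bijective algebra homomorphism that preserves degrees. Write $\Theta$ for the map
\[\Theta(E_{g,h}\otimes E_{s,t}\otimes a)=E_{s,t}\otimes E_{g\phi(s),h\phi(t)}\otimes a.\]
Both $M_G(M_S(A))=M_G\otimes M_S\otimes A$ and $M_{|S|}M_G(A)=M_{|S|}\otimes M_G\otimes A$ are, as $\ell$-modules, tensor products of free modules with $A$. Elements of the form $E_{g,h}\otimes E_{s,t}\otimes a$ span the source, and the formula is $\ell$-trilinear in the evident sense. Hence $\Theta$ is a well-defined $\ell$-linear map.

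\textbf{Bijectivity.} For fixed $s,t\in S$, right multiplication $g\mapsto g\phi(s)$ is a bijection of $G$. Therefore the index map
\[(g,h,s,t)\mapsto (s,t,g\phi(s),h\phi(t))\]
is a bijection of $G\times G\times S\times S$ onto $S\times S\times G\times G$. Its inverse is $(s,t,x,y)\mapsto(x\phi(s)^{-1},y\phi(t)^{-1},s,t)$. Tensoring the inverse basis map with $A$ gives an explicit $\ell$-linear inverse
\[E_{s,t}\otimes E_{x,y}\otimes a\mapsto E_{x\phi(s)^{-1},y\phi(t)^{-1}}\otimes E_{s,t}\otimes a.\]

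\textbf{Multiplicativity.} Products of elementary tensors are
\[(E_{g,h}\otimes E_{s,t}\otimes a)(E_{g',h'}\otimes E_{s',t'}\otimes a')=\delta_{h,g'}\delta_{t,s'}\,E_{g,h'}\otimes E_{s,t'}\otimes aa'.\]
Applying $\Theta$ to the right-hand side gives $\delta_{h,g'}\delta_{t,s'}E_{s,t'}\otimes E_{g\phi(s),h'\phi(t')}\otimes aa'$. On the other side,
\[\Theta(E_{g,h}\otimes E_{s,t}\otimes a)\,\Theta(E_{g',h'}\otimes E_{s',t'}\otimes a')=\delta_{t,s'}\,\delta_{h\phi(t),\,g'\phi(s')}\,E_{s,t'}\otimes E_{g\phi(s),h'\phi(t')}\otimes aa'.\]
When $t=s'$, the condition $h\phi(t)=g'\phi(s')$ is equivalent to $h=g'$ by right cancellation. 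So the Kronecker deltas agree, and $\Theta$ is multiplicative.

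\textbf{Grading.} Take $a$ homogeneous. In $M_S(A)$ the element $E_{s,t}\otimes a$ has degree $\phi(s)|a|\phi(t)^{-1}$. So in $M_G(M_S(A))$, with the identity grading on $G$,
\[|E_{g,h}\otimes E_{s,t}\otimes a|=g\,\phi(s)|a|\phi(t)^{-1}h^{-1}.\]
In $M_{|S|}M_G(A)$ the outer indices do not affect the degree, so the image has degree
\[g\phi(s)\,|a|\,(h\phi(t))^{-1}=g\phi(s)|a|\phi(t)^{-1}h^{-1}.\]
The two degrees coincide, so $\Theta$ and hence its inverse are homogeneous. This makes $\Theta$ an isomorphism of $G$-graded algebras.

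The only genuinely nontrivial point is the Kronecker-delta bookkeeping in the multiplicativity step, which rests on right cancellation in $G$. I expect no real obstacle beyond writing down the inverse map correctly.
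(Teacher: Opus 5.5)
Your proposal is correct and follows the same approach as the paper, which simply says the lemma is a straightforward verification. Your direct check of the index bijection with its explicit inverse, of multiplicativity via right cancellation in $G$, and of the identity $g\phi(s)|a|\phi(t)^{-1}h^{-1}=g\phi(s)|a|(h\phi(t))^{-1}$ is exactly that verification written out.
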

\begin{proof}
    It is a straightforward verification.
\end{proof}

\begin{defi}
    Let $\cC$ be a category. A functor $F:\GrAlg\to \cC$ is  \emph{$G$-graded stable} if for every $A\in\GrAlg$ and every injective morphism of $G$-graded sets $f:S\to T$ with $\card(T)\leq \card(G)$, the morphism $F(f_*:M_S(A)\to M_T(A))$ is an isomorphism.
\end{defi}

\begin{exa}
    Put $S=\{e\}$, $T=G$, and let $\phi:S\to G$, $\psi:T\to G$, $f:S\to T$ be the inclusions. It is straightforward to verify that $f_*:M_S(A)\to M_T(A)$ of Remark \ref{rem:grmatrices} is identified with the inclusion $\iota_e:A\to M_G(A)$, $\iota_e(a)=E_{e,e}\otimes a$. In particular, any $G$-graded stable functor $F:\GrAlg\to\cC$ sends $\iota_e$ to an isomorphism. A partial converse is given by the following result.
\end{exa}

\begin{lem}\label{lemeqgr}
    Let $F:\GrAlg\to \cC$ be an $M_{|G|}$-stable functor that inverts the morphism $\iota_e:A\to M_G(A)$, $\iota_e(a)=E_{e,e}\otimes a$, for every $G$-graded algebra $A$. Then $F$ is $G$-graded stable.
\end{lem}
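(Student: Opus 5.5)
Following the proof of Lemma \ref{lemeq}, we reduce to a statement about matrices over $M_G(A)$, where the $G$-grading on the indices can be traded for the trivial one via Lemma \ref{lem:CDAgraduada}. Let $\phi:S\to G$, $\psi:T\to G$ be $G$-graded sets and $f:S\to T$ an injective morphism with $\card(T)\le\card(G)$, and let $A\in\GrAlg$. We must show that $F(f_*:M_S(A)\to M_T(A))$ is an isomorphism.

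First we place $f_*$ in a commutative square
\[
\xymatrix{
M_S(A)\ar[r]^-{\iota_e}\ar[d]_{f_*} & M_G(M_S(A))\ar[d]^{\id\otimes f_*}\ar[r]^-{\cong} & M_{|S|}M_G(A)\ar[d]^{f_*\otimes\id}\\
M_T(A)\ar[r]_-{\iota_e} & M_G(M_T(A))\ar[r]_-{\cong} & M_{|T|}M_G(A)
}
\]
The horizontal maps on the left are the inclusions $\iota_e$ applied to the graded algebras $M_S(A)$ and $M_T(A)$. By hypothesis $F$ inverts them.

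The horizontal maps on the right are the isomorphisms of Lemma \ref{lem:CDAgraduada}. We need to check that the right square commutes. The element $E_{g,h}\otimes E_{s,t}\otimes a$ goes to $E_{f(s),f(t)}\otimes E_{g\phi(s),h\phi(t)}\otimes a$ along both paths. This uses exactly that $\psi\circ f=\phi$. The left square commutes trivially.

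It remains to see that $F(f_*\otimes\id)$ is an isomorphism. Here the grading on $M_{|S|}M_G(A)$ is $|E_{s,t}\otimes x|=|x|$, so ordinary $M_{|G|}$-stability applies. Fix $s_0\in S$ and put $t_0=f(s_0)$. Then $(f_*\otimes\id)\circ\iota_{s_0}=\iota_{t_0}$ as maps $M_G(A)\to M_{|T|}M_G(A)$.

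Both $\iota_{s_0}$ and $\iota_{t_0}$ are inverted by $F$. This uses $M_{|S|}$- and $M_{|T|}$-stability. These follow from $M_{|G|}$-stability, since $\card(S)\le\card(T)\le\card(G)$ and both sets are nonempty once $S\ne\emptyset$: embed $|S|$ into $|G|$ and use the triangle argument together with independence of the base point. Hence $F(f_*\otimes\id)$ is an isomorphism, and by the square so is $F(f_*)$.

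The main obstacle is this last point: deducing $M_{|S|}$-stability from $M_{|G|}$-stability when $S$ is smaller than $G$. I would embed $j:|S|\hookrightarrow|G|$ and consider $M_{|S|}B\xrightarrow{j_*}M_{|G|}B$. The composite $j_*\circ\iota_{s_0}=\iota_{j(s_0)}$ is inverted by $F$, so $F(\iota_{s_0})$ has a left inverse. For a two-sided inverse I would run the same argument with $M_{|G|}M_{|S|}B$ and use that $\iota_{s_0}$ followed by corner inclusions agrees with a corner inclusion. Concretely, I would invoke the standard fact (\cite{friendly}*{Section 2.2}) that $M_X$-stability implies $M_Y$-stability for $0<\card(Y)\le\card(X)$, which is how the analogous step is treated in Lemma \ref{lemeq}.

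Finally, the case $S=\emptyset$ needs separate care. Then $M_S(A)=0$, and $M_T(A)$ need not be zero. I expect this case to be excluded by convention, as in the equivariant case.
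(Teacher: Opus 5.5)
Your argument is the paper's proof. It uses the same diagram through $M_G(M_S(A))\cong M_{|S|}M_G(A)$ from Lemma~\ref{lem:CDAgraduada}, and the same triangle $(f_*\otimes\id)\circ\iota_{s_0}=\iota_{t_0}$.

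Both of your extra concerns are justified, and the paper handles each of them silently.
\begin{itemize}
\item It assumes $S\neq\emptyset$ when it fixes $s_0\in S$.
\item It passes from $M_{|G|}$-stability to $M_{|S|}$-stability without comment. That step does hold. For an injection $j\colon |S|\to|G|$, the map $F(j_*)$ is split epi because $j_*\iota_{s_0}=\iota_{j(s_0)}$. It is split mono because $M_{|G|}(\iota_{s_0})\circ j_*$ equals a corner inclusion $\iota_{x_0}$ up to the automorphism induced by a permutation of $|G|\times|S|$.
\end{itemize}
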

\begin{proof}
    Let $A\in\GrAlg$ and let $f:S\to T$ be an injective function as in Remark \ref{rem:grmatrices}. Fix $s_0\in S$ and let $t_0=f(s_0)$. We have a commutative diagram of $G$-graded algebras as follows, where the morphisms marked with $\sim$ become isomorphisms upon applying $F$.
    \[\xymatrix@C=4em{M_S(A)\ar[r]^-{\iota_e}_-{\sim}\ar[d]_-{f_*} & M_G(M_S(A))\ar[d]_-{f_*}\ar[r]^-{\text{Lem. \ref{lem:CDAgraduada}}}_-{\cong} & M_{|S|}M_G(A)\ar[d]_-{f_*} & M_G(A)\ar[l]_-{\iota_{s_0}}^-{\sim}\ar@/^1pc/[dl]_-{\iota_{t_0}}^-{\sim}\\
    M_T(A)\ar[r]^-{\iota_e}_-{\sim} & M_G(M_T(A))\ar[r]^-{\text{Lem. \ref{lem:CDAgraduada}}}_-{\cong} & M_{|T|}M_G(A) & }\]
    Note that both $\iota_{s_0}$ and $\iota_{t_0}$ induce isomorphisms upon applying $F$ by $M_{|G|}$-stability. It follows that $F(f_*:M_S(A)\to M_T(A))$ is an isomorphism as well.
\end{proof}

\begin{lem}\label{lem:Gcornergr}
    Let $R\in\GrAlg$ be a unital $G$-graded algebra, let $n\in\N$ and let $u, v\in R^{n\times 1}$ be such that $u^tv=1_R$. Suppose moreover that $u_i$ and $v_i$ are homogeneous elements with $|v_i|=|u_i|^{-1}$. Let $\iota_n:M_G(R)\to M_nM_G(R)$ be the upper-left corner inclusion and let $\iota_{u,v}:M_G(R)\to M_nM_G(R)$ be the graded homomorphism defined by
    \[\iota_{u,v}(E_{g,h}\otimes r)=\sum_{i,j}E_{i,j}\otimes E_{g|u_i|, h|u_j|}\otimes v_iru_j.\]
    If $F:\GrAlg\to \cC$ is an $M_2$-stable functor, then $F(\iota_{u,v})=F(\iota_n)$. In particular, $F(\iota_{u,v})$ is an isomorphism. 
\end{lem}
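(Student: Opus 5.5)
The plan is to mimic the proof of Lemma \ref{lem:Gcorner}, replacing the equivariant conjugation matrix with a graded one and invoking Lemma \ref{lem:M2conjugategr} in place of \cite{er}*{Lemma 2.12}. As there, let $e_1\in R^{n\times 1}$ be the first basis vector. Put $q=e_1e_1^t$, $p=vu^t$, $a=e_1u^t$, $b=ve_1^t$, so that $ab=q$ and $ba=p$. Let
\[V=\begin{pmatrix} a & 1-q\\ 1-p & b\end{pmatrix}\in M_{2n}R,\qquad V^{-1}=\begin{pmatrix} b & 1-p\\ 1-q & a\end{pmatrix},\]
exactly as before. The difference from the equivariant case is that $V$ is not homogeneous in the naive sense. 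We must therefore spread it out over the $G$-indices so that conjugation preserves the grading $|E_{i,j}\otimes E_{g,h}\otimes r|=g|r|h^{-1}$ of $M_nM_G(R)$.

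Concretely, I would work inside the unital algebra $B$ of row- and column-finite matrices indexed by $\{1,\dots,2n\}\times G$ with entries in $R$, which contains $A:=M_{2n}M_G(R)$ as an ideal. I define $W\in B$ entrywise by
\[W_{(i,g),(j,h)}=V_{ij}\ \text{restricted to the component making } g\,|V_{ij}|\,h^{-1}=e,\]
that is, $W=\sum_{i,j}\sum_{g}E_{i,j}\otimes E_{g\gamma_i,\,g\delta_j}\otimes V_{ij}$. The degree labels $\gamma_i,\delta_j$ are chosen so that each entry of $V$ is homogeneous with $\gamma_i|V_{ij}|\delta_j^{-1}=e$. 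The natural choice is to attach the degree $|u_j|$ to the index $j$ on the "$p$-side" and the degree $e$ on the "$q$-side". The entries $u_j$, $v_i$, $\delta_{ij}$ and $v_iu_j$ are all homogeneous, of degrees $|u_j|$, $|u_i|^{-1}$, $e$ and $|u_i|^{-1}|u_j|$ respectively. Each entry $1-p$ and $1-q$ is a sum of such terms, so it has to be split into homogeneous pieces placed in the right $(g,h)$ slots. The resulting $W$ then behaves like a degree-$e$ element, $W^{-1}$ is built from $V^{-1}$ the same way, and $W W^{-1}=1$ follows from $VV^{-1}=1$ computed degreewise.

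The next step is to verify the analogue of diagram \eqref{eq:cornersG}: after identifying $M_2M_nM_G(R)\cong M_{2n}M_G(R)$, we need
\[\Phi^W\circ\iota_2\circ\iota_{u,v}=\iota_2\circ\iota_n .\]
On a generator $E_{g,h}\otimes r$, the left side is the matrix $\sum_{i,j}E_{i,j}\otimes E_{g|u_i|,h|u_j|}\otimes v_iru_j$ conjugated by $W$. Using $a\,(vrv^t\text{-type})\,b$-identities, as in Lemma \ref{lem:Gcorner}, it should reduce to $E_{1,1}\otimes E_{g,h}\otimes r$. The shifts $g|u_i|$ are absorbed precisely because the factor $u_i$ coming from $a=e_1u^t$ carries degree $|u_i|$.

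Lemma \ref{lem:M2conjugategr} applies because $WA\subseteq A$, $AW^{-1}\subseteq A$, and conjugation preserves degree. It gives $F(\Phi^W)=\id$. Since $F(\iota_2)$ is invertible by $M_2$-stability, we conclude $F(\iota_{u,v})=F(\iota_n)$, and $F(\iota_n)$ is an isomorphism by $M_n$-stability, which follows from $M_2$-stability. I expect the main obstacle to be the bookkeeping that makes $W$ degree-preserving. The blocks $1-p$ and $1-q$ mix the "shifted" and "unshifted" index conventions, and one must check that a consistent assignment of $G$-shifts to the $2n$ row and column labels exists. If a single labelling fails, I would instead conjugate in two steps: first by the diagonal permutation-type matrix $\sum_i E_{i,i}\otimes\sum_g E_{g|u_i|,g}$, which converts $\iota_{u,v}$ into an ungraded-looking corner embedding, and then by the ungraded $V$ tensored with $\sum_g E_{g,g}$.
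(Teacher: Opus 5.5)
Your plan is the paper's proof. The paper also conjugates by a $G$-spread copy of the same block matrix $V$ inside $M_2M_n\Gamma_G(R)$, where $\Gamma_G(R)$ is the unital algebra of row- and column-finite $G\times G$ matrices over $R$. It then checks the corner diagram and concludes via Lemma \ref{lem:M2conjugategr} and $M_2$-stability.

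The bookkeeping you left open does work out, and your first, componentwise description of $W$ already settles it.
\begin{itemize}
\item The map $x\mapsto \tilde x=\sum_{g,h\in G}E_{g,h}\otimes x_{g^{-1}h}$, where $x_k$ is the degree-$k$ component of $x$, is a unital ring homomorphism $R\to\Gamma_G(R)$. Multiplying a homogeneous element of $M_G(R)$ on either side by any $\tilde x$ leaves its degree unchanged.
\item Applied entrywise, this map sends $u_j$, $v_i$, $v_iu_j$ to the paper's $a(1,j)=\sum_g E_{g,g|u_j|}\otimes u_j$, $b(i,1)=\sum_g E_{g|u_i|,g}\otimes v_i$ and $p(i,j)=\sum_g E_{g|u_i|,g|u_j|}\otimes v_iu_j$. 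So $W=\tilde V$ is exactly the paper's conjugating matrix, and $W^{-1}=\widetilde{V^{-1}}$. The identities the paper verifies by hand then come for free.
\item Moreover $\iota_{u,v}(x)=\tilde v\,x\,\tilde u^{t}$ with $\tilde u^{t}\tilde v=\widetilde{u^tv}=1$. Hence commutativity of the diagram is literally the ungraded computation $e_1\tilde u^{t}\tilde v\,x\,\tilde u^{t}\tilde v e_1^{t}=e_1xe_1^{t}$, together with $\tilde p\,X=X=X\tilde p$.
\end{itemize}
Equivalently, a consistent labelling does exist. Every entry of $V$ is already homogeneous; for example, $1-v_iu_i$ has degree $e$, so no splitting is needed. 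You can give label $|u_j|$ to column $j$ and to row $n+j$ for $1\le j\le n$, and label $e$ to every other row and column. Then $\gamma_I|V_{IJ}|\delta_J^{-1}=e$ holds at every nonzero entry.

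The two-step fallback is therefore unnecessary, and as written it would not work. Conjugation by your diagonal shift matrix alone sends $E_{i,j}\otimes E_{a,b}\otimes r$ to $E_{i,j}\otimes E_{a|u_i|,b|u_j|}\otimes r$, which changes degrees. The intermediate map $E_{g,h}\otimes r\mapsto\sum_{i,j}E_{i,j}\otimes E_{g,h}\otimes v_iru_j$ is not homogeneous either. So Lemma \ref{lem:M2conjugategr} cannot be applied step by step. The shifts must be absorbed on both sides of $V$ into a single conjugator, and that conjugator is the labelled $W$ again.
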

\begin{proof}
    It is straightforward to verify that $\iota_{u,v}$ indeed defines a morphism in $\GrAlg$.
    
    Let $\Gamma_G(R)$ be the (ungraded) algebra of those $G\times G$ matrices with coefficients in $R$ that have finitely many nonzero entries in each row and column. 
    Then $\Gamma_G(R)$ is a unital algebra and $M_G(R)\subset \Gamma_G(R)$ is a two sided ideal. Consider the following elements of $\Gamma_G(R)$:
    \begin{align*}
        p(i,j)&= \sum_{g\in G} E_{g|u_i|, g|u_j|}\otimes v_iu_j && \text{($1\leq i, j\leq n$)} \\
        a(1,j)&= \sum_{g\in G} E_{g,g|u_j|}\otimes u_j &&\text{($1\leq j\leq n$)} \\
        b(i,1)&=\sum_{g\in G} E_{g|u_i|,g}\otimes v_i && \text{($1\leq i\leq n$)}
    \end{align*}
    They indeed belong to $\Gamma_G(R)$ since all of them have exactly one nonzero entry in each row and column. An easy computation shows that if $x\in M_G(R)$ is an homogeneous element and $y\in \Gamma_G(R)$ is one of the elements  $p(i,j)$, $a(1,j)$ or $b(i,1)$, then both $xy,yx\in M_G(R)$ are homogeneous with $|xy|=|yx|=|x|$. For example, if $r\in R$ is homogeneous we have
    \[
        |a(1,j)(E_{h,k}\otimes r)|=|E_{h|u_j|^{-1},k}\otimes u_jr|=h|u_j|^{-1}|u_j||r|k^{-1}=h|r|k^{-1}=|E_{h,k}\otimes r|. 
    \]
    This implies that if $x\in M_G(R)$ is homogeneous and  $y=a(1,j)$, then $yx\in M_G(R)$ is homogeneous and $|yx|=|x|$. It is also straightforward to show that the following identities hold:
    \begin{align*}
        p(i,k)p(k,j)&=p(i,j) \\
        a(1,k)p(k,j)&=a(1,j) \\
        p(i,k)b(k,1)&=b(i,1) \\
        a(1,k)b(k,1)&=1 \\
        b(i,1)a(1,j)&= p(i,j)
    \end{align*}
    Now define $q, p,a,b\in M_{n}\Gamma_G(R)$ as follows:
    \begin{align*}
        q &= E_{1,1}\otimes 1 \\
        p &= \sum_{i,j} E_{i,j} \otimes p(i,j) \\
        a &= \sum_j E_{1,j}\otimes a(1,j) \\
        p &= \sum_i E_{i,1} \otimes b(i,1)
    \end{align*}
    Using the identities above, the following equalities are easily verified:
    \begin{equation}\label{eq:lemidentities}q^2=q,\quad p^2=p,\quad qa=a=ap,\quad pb=b=bq,\quad ab=q,\quad ba=p.\end{equation}
    Moreover, it also follows from the above that if $x\in M_nM_G(R)$ is homogeneous and $y$ is either $q$, $p$, $a$ or $b$, then $xy,yx\in M_nM_G(R)$ are homogeneous as well and $|xy|=|yx|=|x|$. Define $V\in M_2M_n\Gamma_G(R)$ by
    \[V=\begin{pmatrix}
        a & 1-q \\ 1-p & b
    \end{pmatrix}\]
    It follows from \eqref{eq:lemidentities} that $V$ is invertible with inverse
    \[V^{-1}=\begin{pmatrix}
        b & 1-p \\ 1-q & a
    \end{pmatrix}\]
    Moreover, if $x\in M_2M_n M_G(R)$ is homogeneous then $Vx, xV^{-1}\in M_2M_nM_G(R)$ are homogeneous as well and $|Vx|=|xV^{-1}|=|x|$.
    A short computation shows that the following diagram in $\GrAlg$ commutes, where $\Phi^V(x)=VxV^{-1}$.
    \begin{equation}\label{eq:cornersGgr}\begin{gathered}\xymatrix{
    M_G(R)\ar[r]^-{\iota_n}\ar@/_1pc/[dr]_-{\iota_{u,v}} & M_nM_G(R)\ar[r]^{\iota_2} & M_2M_nM_G(R) \\
    & M_nM_G(R)\ar[r]^-{\iota_2} & M_2M_nM_G(R)\ar[u]_-{\Phi^V}
    }\end{gathered}\end{equation}
    Upon applying $F$ to \eqref{eq:cornersGgr} we get that 
    \[F(\Phi^V)\circ F(\iota_2)\circ F(\iota_{u,v})=F(\iota_2)\circ F(\iota).\]
    As $F(\Phi^V)$ is the identity by Lemma \ref{lem:M2conjugategr} and $F(\iota_2)$ is invertible by $M_2$-stability, it follows that $F(\iota_{u,v})=F(\iota)$.
\end{proof}

\subsection{\texorpdfstring{$G$-Graded Morita}{G-Graded Morita} equivalence}

\begin{defi}\label{defi:GgrMorita}
    Two unital $G$-graded algebras $R$ and $S$ are \emph{$G$-graded Morita equivalent} if there exists a tuple $(R,S,P,Q,\alpha,\beta)$ satisfying the following conditions:
    \begin{itemize}
        \item The sets $P$ and $Q$ are $G$-graded modules equipped with an $S$-$R$-bimodule structure and an $R$-$S$-bimodule structure, respectively. The actions of $R$ and $S$ on $P$ and $Q$ are moreover compatible with the $G$-gradings
        \[
            |s\cdot p\cdot r| = |s||p||r|
        \]
        \[
           |r\cdot q\cdot s| = |r||q||s|
        \]
        for all $r\in R$, $s\in S$, $p\in P$, $q\in Q$.

\item The modules $P\otimes_R Q$ and $Q\otimes_{S} P$ are $G$-graded with
\[
|p\otimes q|=|p||q|\]\[|q\otimes p|=|q||p| 
\]
for homogeneous $p$ and $q$.
The maps $\alpha: P\otimes_R Q\to S$ and $\beta: Q\otimes_S P\to R$ are $G$-graded bimodule isomorphisms satisfying the mixed associativity relations
        \[
            \alpha(p\otimes q)\cdot p' = p\cdot \beta(q\otimes p')
        \]
        \[
            \beta(q\otimes p)\cdot q' = q\cdot \alpha(p\otimes q')
        \]
        for all $p,p'\in P$ and $q,q'\in Q$.
       \end{itemize}
    We call such a tuple $(R,S,P,Q,\alpha,\beta)$ a \emph{$G$-graded Morita equivalence}.
\end{defi}

\begin{rem} If $(R,S,P,Q,\alpha,\beta)$ is a $G$-graded Morita equivalence, we can consider $P\otimes_R Q$ and $ Q\otimes_S P$ as $G$-graded algebras with the following products:       
\[
(p\otimes q)(p'\otimes q')=p\cdot \beta(q\otimes p')\otimes q'
\]
\[
(q\otimes p)(q'\otimes p')=q \cdot \alpha(p\otimes q')\otimes p'
\]
With this structure, $\alpha$ and $\beta$ are isomorphisms of $G$-graded algebras. 
\end{rem}

\begin{defi}
    Let $\cC$ be a category. A functor $F:\GrAlg^u\to \cC$ is \emph{$G$-graded Morita invariant} if $F(R)\cong F(S)$ whenever $R$ and $S$ are $G$-graded Morita equivalent unital $G$-graded algebras. A functor $F:\GrAlg\to \cC$ is called \emph{$G$-graded Morita invariant} if its restriction to $\GrAlg^u$ is.
\end{defi}

\begin{prop}\label{prop:iso_tensorMGgr}
    Let $F:\GrAlg\to \cC$ be an $M_2$-stable functor and let  $R$ and $S$ be unital $G$-graded algebras. If $R$ and $S$ are $G$-graded Morita equivalent, then \[F(M_G(R))\cong F(M_G(S)).\]
\end{prop}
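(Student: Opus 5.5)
We follow the proof of Proposition \ref{prop:iso_tensorMG}, with the graded corner lemma (Lemma \ref{lem:Gcornergr}) in place of Lemma \ref{lem:Gcorner}. In the graded setting there is no twisted corner algebra; instead, the degrees of the elements $p_i,q_i$ are recorded in the $M_G$-indices.

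\textbf{Step 1: homogeneous generators and the morphism $\xi$.} Let $(R,S,P,Q,\alpha,\beta)$ be a $G$-graded Morita equivalence. Since $\alpha$ is a graded isomorphism and $1_S$ has degree $e$, we may choose homogeneous $p_1,\dots,p_n\in P$ and $q_1,\dots,q_n\in Q$ with $\alpha(\sum_i p_i\otimes q_i)=1$ and $|p_i||q_i|=e$. To arrange this, decompose an arbitrary expression $\sum p_i\otimes q_i$ into homogeneous pieces and keep only those of total degree $e$; the remaining pieces contribute zero to the degree-$e$ component of $1_S$.

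Define $\xi\colon M_G(S)\to M_nM_G(R)$ by
\[\xi(E_{g,h}\otimes s)=\sum_{i,j}E_{i,j}\otimes E_{g|p_i|^{-1},\,h|p_j|^{-1}}\otimes \beta(q_i\otimes s p_j).\]
Write $x=|q_i|=|p_i|^{-1}$. Then $\beta(q_i\otimes sp_j)$ has degree $|p_i|^{-1}|s||p_j|$, so the degree of the image is
\[g|p_i|^{-1}\,|p_i|^{-1}\cdots\]
and this must be checked carefully. The correct indices are those for which the degree works out: with row index $g\,|q_i|^{-1}=g|p_i|$ we get
\[g|p_i|\cdot |p_i|^{-1}|s||p_j|\cdot |p_j|^{-1}h^{-1}=g|s|h^{-1},\]
as required. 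So we take the indices $E_{g|p_i|,\,h|p_j|}$.

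Multiplicativity reduces to $\sum_k \beta(q_i\otimes sp_k)\beta(q_k\otimes s'p_j)=\beta(q_i\otimes ss'p_j)$. This follows from the mixed associativity relations together with $\sum_k p_k\,\alpha$-$1$, exactly as in the ungraded case. Note that no idempotent corner is needed: $\xi$ already lands in $M_nM_G(R)$ and is a (non-unital) graded homomorphism.

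\textbf{Step 2: the candidate inverse.} Set
\[\Phi=(\iota_n)_*^{-1}\circ\xi_*\colon F(M_G(S))\to F(M_G(R)),\]
using $M_n$-stability, which follows from $M_2$-stability as in the ungraded case. Symmetrically, choose homogeneous $\tilde p_k,\tilde q_k$ with $\beta(\sum_k\tilde q_k\otimes\tilde p_k)=1$ and define $\tilde\xi$ and $\Psi$ in the same way. Naturality of corner inclusions gives the same commutative diagram as in Proposition \ref{prop:iso_tensorMG}. Hence $\Psi\circ\Phi=\id$ reduces to showing that
\[H=(M_n\tilde\xi)\circ\xi\colon M_G(S)\to M_nM_{\tilde n}M_G(S)\]
induces the same map as the corner inclusion.

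\textbf{Step 3: identify $H$ with $\iota_{u,v}$ (main obstacle).} A direct computation should give
\[H(E_{g,h}\otimes s)=\sum E_{(i,k),(j,l)}\otimes E_{g|p_i||\tilde q_k|,\;h|p_j||\tilde q_l|}\otimes \alpha(\tilde p_k\otimes q_i)\, s\,\alpha(p_j\otimes\tilde q_l),\]
after simplifying with mixed associativity. Now set
\[v_{(i,k)}=\alpha(\tilde p_k\otimes q_i),\qquad u_{(j,l)}=\alpha(p_j\otimes \tilde q_l).\]
These are homogeneous with $|v_{(i,k)}|=|u_{(i,k)}|^{-1}$, and
\[u^tv=\sum\alpha(p_j\otimes\tilde q_l)\,\alpha(\tilde p_l\otimes q_j)=1_S\]
by the two normalizations. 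The index shift matches $E_{g|u|,h|u|}$ because $|u_{(j,l)}|=|p_j||\tilde q_l|$. Thus $H=\iota_{u,v}$ with $m=n\tilde n$, and Lemma \ref{lem:Gcornergr} gives $F(H)=F(\iota_m)$. Hence $\Psi\Phi=\id$, and $\Phi\Psi=\id$ follows by symmetry.

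The delicate point is bookkeeping. One must get the index conventions in $\xi$ and $H$ exactly right so that the degrees are consistent and the result literally has the form required by Lemma \ref{lem:Gcornergr}, including the order $u$ versus $v$ relative to the lemma's convention $v_i r u_j$. It may be necessary to swap the roles of $\alpha(\tilde p\otimes q)$ and $\alpha(p\otimes\tilde q)$, or to use $|q_i|^{-1}$ in place of $|p_i|$, for these to fit.
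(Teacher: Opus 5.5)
Your proposal is correct and follows essentially the same route as the paper. It uses homogeneous generators with $|q_i|=|p_i|^{-1}$, the graded homomorphism $M_G(S)\to M_nM_G(R)$ with $M_G$-indices shifted by $|p_i|$ (the paper builds it as $M_G(\overline{\phi})$ composed with the isomorphism of Lemma~\ref{lem:CDAgraduada}), and it identifies the composite with $\iota_{u,v}$ from Lemma~\ref{lem:Gcornergr}.

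Your choice $v_{(i,k)}=\alpha(\tilde{p}_k\otimes q_i)$, $u_{(j,l)}=\alpha(p_j\otimes\tilde{q}_l)$ already matches the lemma's conventions exactly: degrees, index shifts, $v_i r u_j$ order, and $u^tv=1_S$. So the hedging about swapping roles in your last paragraph is unnecessary.
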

\begin{proof}
    Let $(R,S,P,Q,\alpha,\beta)$ be a $G$-graded Morita equivalence. Since $\alpha:P\otimes_R Q\to S$ is an isomorphism, there exist $n\in\N$, $p_1,\dots, p_n\in P$ and $q_1,\dots, q_n\in Q$ such that $\alpha(\sum_{i=1}^n p_i\otimes q_i)=1_S$.
    
    We may assume that both the $p_i$ and the $q_i$ are homogeneous with $|q_i|=|p_i|^{-1}$. Indeed, by decomposing each $p_i$ and $q_i$ into their homogeneous components, $p_i=\sum_g p_i^g$ and $q_i=\sum_gq_i^g$, we have:
    \[1_S=\alpha\left(\sum_{i=1}^n p_i\otimes q_i\right)=\sum_{k\in G}\alpha\left(\sum_{i=1}^n\sum_{g\in G}p_i^g\otimes q_i^{g^{-1}k}\right)\]
    Note that the $k$-th term in the right sum has degree $k$. Since $|1_S|=e$, it follows that these terms must vanish for
    $k\ne 1$. Then we have
            \[1_S=\alpha\left(\sum_{i=1}^n\sum_{g\in G}p_i^g\otimes q_i^{g^{-1}}\right)\]
        where the $p_i^g$ and $q_i^{g^{-1}}$ are homogeneous and $|q_i^{g^{-1}}|=g^{-1}=|p_i^g|^{-1}$. Thus, for the remainder of the proof, we assume $1_S = \alpha(\sum_i p_i \otimes q_i)$ with $p_i, q_i$ homogeneous and $|q_i| = |p_i|^{-1}$.

    Consider $T:=\{1,\dots,n\}$ as a $G$-graded set with grading given by $\xi:T\to G$, $\xi(i)=|p_i|$. The formula
    \[\o{\phi}(s)=\sum_{i,j}E_{i,j}\otimes \beta((q_i\cdot s)\otimes p_j)\]
    defines a graded homomorphism $\o{\phi}:S\to M_T(R)$. Let $\phi:M_G(S)\to M_nM_G(R)$ be the composite:
    \[\xymatrix@C=4em{M_G(S)\ar[r]^-{M_G(\o{\phi})} & M_G(M_T(R))\ar[r]^{\text{Lem. \ref{lem:CDAgraduada}}} & M_nM_G(R)}\]
    It is easily verified that $\phi(E_{g,h}\otimes s)=\sum_{i,j}E_{i,j}\otimes E_{gf(i),hf(j)}\otimes \beta((q_i\cdot s)\otimes p_j)$. Finally, let $\Phi:F(M_G(S))\to F(M_G(R))$ be the composite
    \[\xymatrix@C=4em{F(M_G(S))\ar[r]^-{F(\phi)} & F(M_nM_G(R))\ar[r]^-{F(\iota_n)^{-1}}_-{\cong} & F(M_G(R))}\]
    where $\iota_n$ is the upper-left corner inclusion. We will show that $\Phi$ is an isomorphism.

    By symmetry, we may find $\t{n}\in\N$, $\t{p}_1,\dots, \t{p}_\t{n}\in P$ and $\t{q}_1,\dots, \t{q}_\t{n}\in Q$ such that $\beta(\sum_{i=1}^{\t{n}}\t{q}_i\otimes\t{p}_i)=1_R$. We may as well assume that $\t{p}_i$ and $\t{q}_i$ are homogeneous with $|\t{p}_i|=|\t{q}_i|^{-1}$. Thus, we get a graded homomorphism $\psi:M_G(R)\to M_{\t{n}}M_G(S)$ defined by $\psi(E_{g,h}\otimes r)=\sum_{i,j}E_{i,j}\otimes E_{g\t{f}(i),h\t{f}(j)}\otimes \alpha((\t{p}_i\cdot r)\otimes \t{q}_j)$ and a morphism $\Psi:F(M_G(R))\to F(M_G(S))$ defined by $\Psi = F(\iota_n)^{-1}\circ F(\psi)$.

    Consider the following commutative diagram:
\[\xymatrix@C=5em{F(M_G(S))\ar@/_1pc/[dr]_-{\Phi}\ar[r]^-{\phi_*} & F(M_nM_G(R))\ar[r]^-{M_n(\psi)_*} & F(M_nM_\t{n}M_G(S))\\
& F(M_G(R))\ar@/_1pc/[dr]_-{\Psi}\ar[r]^-{\psi_*}\ar[u]^-{\cong}_-{(\iota_n)_*} & F(M_\t{n}M_G(S))\ar[u]^-{\cong}_-{(\iota_n)_*} \\
& & F(M_G(S))\ar[u]^-{\cong}_-{(\iota_\t{n})_*}
}\]
A direct computation shows that
$M_n(\psi)\circ \phi:M_G(S)\to M_n M_\t{n}M_G(S)$
is given by
\[E_{h,k}\otimes s\mapsto \sum_{i,j=1}^n\sum_{l,m=1}^\t{n}E_{i,j}\otimes E_{l,m}\otimes E_{h|p_i||\t{q}_l|, k|p_j||\t{q}_m|} \otimes \alpha(\t{p}_l\otimes q_i)s\alpha(p_j\otimes \t{q}_m).\]
Thus, upon applying $F$, the composite $M_n(\psi)\circ\phi$ induces the same morphism as the upper-left-corner inclusion by Lemma \ref{lem:Gcornergr}. This proves that $\Psi\circ\Phi=\id$. The fact that $\Phi\circ\Psi=\id$ follows by symmetry.
\end{proof}

\begin{coro}
    Let $F:\GrAlg\to\cC$ be a functor. If $F$ is $G$-graded stable, then $F$ is $G$-graded Morita invariant.
\end{coro}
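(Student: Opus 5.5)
The proof mirrors the equivariant corollary: we sandwich Proposition \ref{prop:iso_tensorMGgr} between two stability isomorphisms. Let $R$ and $S$ be unital $G$-graded algebras that are $G$-graded Morita equivalent, and let $F:\GrAlg\to\cC$ be $G$-graded stable. We aim for the chain
\[F(R)\cong F(M_G(R))\cong F(M_G(S))\cong F(S).\]

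The outer isomorphisms come from the example preceding Lemma \ref{lemeqgr}. Take $S=\{e\}$ with the inclusion grading, $T=G$ graded by $\id_G$, and $f$ the inclusion. Then $\card(T)=\card(G)$ and $f$ is an injective morphism of $G$-graded sets. Moreover $f_*$ identifies with $\iota_e:A\to M_G(A)$, so $G$-graded stability gives that $F(\iota_e)$ is an isomorphism for $A=R$ and for $A=S$.

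For the middle isomorphism we need $F$ to be $M_2$-stable, since that is the hypothesis of Proposition \ref{prop:iso_tensorMGgr}. This is the only step needing thought. Take $T=\{1,2\}$ and $T'=\{1\}$, both graded by the constant map to $e$. For these sets $M_T(A)=M_2A$ with the grading $|E_{s,t}\otimes a|=|a|$, and the inclusion $\{1\}\to\{1,2\}$ induces exactly the upper-left corner inclusion $\iota_1:A\to M_2A$.

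This requires $\card(\{1,2\})\le\card(G)$. If $G$ is trivial, that bound fails. In that case, however, a graded set is just a set and $M_S(A)=M_{|S|}A$. Every $M_n$ with $n\le 1$ is then trivially fine, so we need $M_2$-stability some other way. Here $G$-graded stability only gives information about sets of cardinality at most one, which is useless. So for trivial $G$ I would instead note that $M_G(R)=R$, and then the chain is just Proposition \ref{prop:iso_tensorMGgr} for $G=1$. That still needs $M_2$-stability, so I would interpret the definition's cardinality bound as in the ungraded convention (stability for finite matrices), exactly as in Corollary \ref{cor:gid}.

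For $\card(G)\ge2$ the argument is clean: $F$ is $M_2$-stable, Proposition \ref{prop:iso_tensorMGgr} gives $F(M_G(R))\cong F(M_G(S))$, and composing with the inverses of $F(\iota_e)$ gives $F(R)\cong F(S)$, which proves $G$-graded Morita invariance. The main obstacle is therefore the bookkeeping showing that $M_2$-stability follows from $G$-graded stability, including the trivial-group edge case.
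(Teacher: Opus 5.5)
Your argument is the same as the paper's. It runs the chain $F(R)\cong F(M_G(R))\cong F(M_G(S))\cong F(S)$, with the outer isomorphisms coming from $F(\iota_e)$ and the middle one from Proposition \ref{prop:iso_tensorMGgr}. Your derivation of $M_2$-stability from the inclusion $\{1\}\subset\{1,2\}$, both graded constantly by $e$, makes explicit a step the paper's proof skips, and it works exactly when $\card(G)\geq 2$.

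Your caveat about trivial $G$ is more than a matter of interpretation: the statement as written is false there. When $G=1$ the hypothesis only involves bijections between one-point sets, so every functor is $G$-graded stable. The identity functor of $\Alg$ is then a counterexample: $\ell$ and $M_2\ell$ are Morita equivalent but not isomorphic for $\ell\neq 0$. So in that case $M_2$-stability has to be added as an explicit hypothesis.
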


\begin{proof}
    Let $R$ and $S$ be unital $G$-graded algebras. If $R$ and $S$ are $G$-graded Morita equivalent, we have
    \[F(R)\cong F(M_G(R))\cong F(M_G(S))\cong F(S).\]
    Indeed, the isomorphism in the middle holds by Proposition \ref{prop:iso_tensorMGgr} and the other two isomorphisms hold by $\Ggr$-stability.
\end{proof}

\appendix
\section{Noncommutative quotients}\label{sec:appendix}

Let $X$ be a simplicial set and let $v\in X_0$ be a vertex. The \emph{star of $v$ in $X$} is defined as the set $\St_X(v)$ consisting of all the simplices of $X$ that have $v$ as one of its vertices. We shall drop $X$ from the notation when it is clear from the context. Note that $\St(v)$ may not be a simplicial subset of $X$, so we write $\overline{\St(v)}$ for the simplicial subset of $X$ generated by $\St(v)$. Finally, for a simplicial subset $K\subseteq X$ we define
\[\St_X(K)=\bigcup_{v\in K_0}\St_X(v).\]

Let $G$ be a group acting on a simplicial set $X$. We will consider actions $G\curvearrowright X$ satisfying one of the following conditions, which may be considered combinatorial analogues of the notion of a properly discontinuous action on a topological space.
\begin{enumerate}[label=(\Alph*)]
    \item\label{item:A} For every $v\in X_0$ and every $g\in G$, $g\ne e$, $\St(v)\cap \St(g\cdot v)=\emptyset$.
    \item\label{item:B} For every $v\in X_0$ and every $g\in G$, $g\ne e$, $\St(v)\cap \St(\overline{\St(g\cdot v)})=\emptyset$.
\end{enumerate}

\subsection{Subdivision}\label{sec:subdivision}
Let $\sd(X)$ denote the barycentric subdivision of a simplicial set $X$ and consider the cosimplicial simplicial set $\sd:\Delta\to\sSet$, $[n]\mapsto \sd(\Delta^n)$. For any simplicial set $X$ there is an isomorphism $\sd(X)\cong X\otimes \sd$ in the sense of \cite{FP}*{Section 4.2}. Explicitly, the set of $k$-simplices of $X\otimes \sd$ is defined by
\[(X \otimes\sd)_k=\left(\coprod_n X_n\times \sd(\Delta^n)_k\right)/\sim\]
where $\sim$ is the equivalence relation generated by
\[(\alpha^*(\sigma), s)\sim (\sigma,\alpha_*(s))\]
for $\alpha\in\hom_\Delta([m],[n])$, $\sigma \in X_n$ and $s\in \sd(\Delta^m)_k$. Any $\beta\in\hom_\Delta([l],[k])$ induces a function $\beta^*:(X\otimes \sd)_k \to (X\otimes \sd)_l$ by the formula
\[\beta^*(\sigma, s)=(\sigma, \beta^*(s)).\]
These definitions make $X\otimes \sd$ into a simplicial set.

Recall that a $k$-simplex $s\in\sd(\Delta^n)_k$ is given by a family of non-empty subsets of $[n]$, $s=(S_0\subseteq S_1\subseteq \cdots \subseteq S_k)$. This simplex is called \emph{interior} if $S_k=[n]$, that is, if its last vertex is the barycenter of $\Delta^n$.
We will use the following result.

\begin{prop}[\cite{FP}*{Proposition 4.6.3 and Addendum 4.2.8}]\label{propo:FP} Let $X$ be a simplicial set and let $\tau\in \sd(X)_k$. Then the following assertions hold.
    \begin{enumerate}[label=(\roman*)]
        \item\label{item:FP1} The $k$-simplex $\tau$ has a \emph{unique} representation of the form $(\sigma, s)$, where $\sigma \in X_n$ is a non-degenerate simplex and $s\in \sd(\Delta^n)_k$ is an interior simplex. Such a pair $(\sigma, s)$ is called a \emph{minimal pair} and the integer $n$ is called the \emph{level} of $\tau$, $n=\lev(\tau)$.
        \item\label{item:FP2} Let $(\sigma, s)\in X_n\times \sd(\Delta^n)_k$ be the unique minimal pair representing $\tau$. If $(\tilde{\sigma}, \tilde{s})\in X_m\times \sd(\Delta^m)_k$ is any other pair representing $\tau$, then there exist a (not necessarily unique) face operator $\delta:[n]\to [m]$ and a (not necessarily unique) degeneracy operator $\rho:[m]\to [n]$ such that $\sigma=\delta^*(\tilde{\sigma})$ and $s=\rho_*(\tilde{s})$.
    \end{enumerate}
\end{prop}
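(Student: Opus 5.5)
The plan is to prove both parts by a normalization procedure, in the style of the Eilenberg--Zilber lemma. I first produce a canonical minimal pair from an arbitrary representing pair. I then show that this canonical pair is unchanged under the generating relation $(\alpha^*(\sigma),s)\sim(\sigma,\alpha_*(s))$. Together these give uniqueness, and part \ref{item:FP2} falls out of the explicit form of the normalization.

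\emph{Existence and normalization.} Let $(\tilde\sigma,\tilde s)\in X_m\times\sd(\Delta^m)_k$ with $\tilde s=(S_0\subseteq\cdots\subseteq S_k)$. The procedure has two steps.
\begin{enumerate}[label=(\arabic*)]
\item Let $\delta_1:[m']\to[m]$ be the face operator with image $S_k$. Then $\tilde s=(\delta_1)_*(s'')$ for a unique interior simplex $s''\in\sd(\Delta^{m'})_k$, so $(\tilde\sigma,\tilde s)\sim(\delta_1^*\tilde\sigma,s'')$.
\item By the Eilenberg--Zilber lemma, write $\delta_1^*\tilde\sigma=\rho_1^*(\sigma)$ with $\sigma\in X_n$ non-degenerate and $\rho_1:[m']\to[n]$ a degeneracy operator; here $\sigma$ and $\rho_1$ are unique.
\end{enumerate}
Put $s=(\rho_1)_*(s'')$. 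It is interior because $\rho_1$ is surjective and so sends $[m']$ onto $[n]$. Hence $(\tilde\sigma,\tilde s)\sim(\sigma,s)$ is a minimal pair, and I write $N(\tilde\sigma,\tilde s):=(\sigma,s)$. Every step is canonical, and $N$ is the identity on minimal pairs.

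\emph{Uniqueness.} It suffices to check $N(\alpha^*\sigma,s)=N(\sigma,\alpha_*s)$ for every $\alpha$. Factoring $\alpha$ as a degeneracy followed by a face, I treat the two cases separately.
\begin{itemize}
\item For a face $\delta$: the last set of $\delta_*s$ is $\delta$ of the last set of $s$. So step (1) for $(\sigma,\delta_*s)$ uses the face $\delta\circ\delta'$, where $\delta'$ is the face used for $(\delta^*\sigma,s)$. Both computations reach the same intermediate pair.
\item For a degeneracy $\rho$, which I expect to be the main obstacle: the last set of $\rho_*s$ is $\rho(S_k)$. I would use the epi--mono factorization $\rho\circ\delta'=\delta''\circ\rho''$, with $\delta'$ the face onto $S_k$, $\delta''$ the face onto $\rho(S_k)$ and $\rho''$ a degeneracy. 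Then $\delta'^*\rho^*\sigma=\rho''^*\delta''^*\sigma$, and the uniqueness in the Eilenberg--Zilber lemma shows that both sides have the same non-degenerate part $\sigma_0$. Finally I would check that the interior simplices $(\rho_1\circ\cdots)_*$ agree. This holds because the Eilenberg--Zilber degeneracy of $\rho''^*\delta''^*\sigma$ is the degeneracy of $\delta''^*\sigma$ precomposed with $\rho''$.
\end{itemize}
Since $\sim$ is generated by these relations, $N$ is constant on equivalence classes, which proves \ref{item:FP1}.

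\emph{Part \ref{item:FP2}.} By \ref{item:FP1}, $N(\tilde\sigma,\tilde s)$ is the minimal pair $(\sigma,s)$, so I may use the notation of the normalization. Choose a face operator $\delta_2$ that is a section of $\rho_1$, so that $\rho_1\delta_2=\id$. Then
\[\sigma=\delta_2^*\rho_1^*\sigma=\delta_2^*\delta_1^*\tilde\sigma,\]
so $\delta:=\delta_1\delta_2$ is a face operator with $\sigma=\delta^*(\tilde\sigma)$. Next choose a monotone surjection $r:[m]\to[m']$ with $r\delta_1=\id$, and set $\rho:=\rho_1 r$, which is a degeneracy operator. Then
\[\rho_*\tilde s=(\rho_1)_*r_*(\delta_1)_*s''=(\rho_1)_*s''=s.\]
The non-uniqueness of $\delta$ and $\rho$ reflects the freedom in choosing the sections $\delta_2$ and $r$.
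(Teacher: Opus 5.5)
Your proof is correct. The paper gives no argument of its own for this statement: it quotes it from Fritsch--Piccinini (Proposition 4.6.3 together with Addendum 4.2.8), that is, it specializes their general description of the simplices of $X\otimes C$, for a cosimplicial object $C$, to the case $C=\sd$.

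Your normalization is the classical normal-form argument behind results of this kind, written out directly for $\sd$. You first pass to the carrier face $\delta_1$ onto $S_k$, and then apply Eilenberg--Zilber. Your check on the generating relations is complete. In the degeneracy case, both computations end at $(\sigma_0,(\rho_2\rho'')_*s'')$, where $\delta''^*\sigma=\rho_2^*\sigma_0$.

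What your version buys is that it isolates exactly the two properties of $\sd(\Delta^\bullet)$ that are needed:
\begin{itemize}
\item every simplex of $\sd(\Delta^m)$ is $\delta_*$ of a unique interior simplex for a unique face $\delta$;
\item $\rho_*\delta'_*=\delta''_*\rho''_*$ with $\rho''$ surjective, so interiority is preserved.
\end{itemize}
It also produces the operators of part (ii) explicitly, as $\delta=\delta_1\delta_2$ and $\rho=\rho_1 r$. This is exactly the form in which part (ii) is used later, in Lemma~\ref{lem:level} and Lemma~\ref{lem:AimpliesB}. The citation, in exchange, is shorter and covers more general cosimplicial objects.

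Two steps you leave implicit are worth writing out.
\begin{itemize}
\item In the degeneracy case, the interior simplex produced by step (1) for $(\sigma,\rho_*s)$ is $\rho''_*s''$. This holds because $\delta''_*$ is injective and $\delta''_*\rho''_*s''=\rho_*\delta'_*s''=\rho_*s$.
\item In part (ii), you need the monotone section and the monotone retraction to exist. For example, take $\delta_2(j)=\min\rho_1^{-1}(j)$ and $r(i)=\max\bigl(\{0\}\cup\{j:\delta_1(j)\le i\}\bigr)$.
\end{itemize}
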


An action of a group $G$ on a simplicial set $X$ induces an action of $G$ on $\sd(X)$. Under the identification $\sd(X)\cong X\otimes \sd$, the induced action is $g\cdot (\sigma, s)=(g\cdot \sigma, s)$, where $g\in G$, $\sigma \in X_n$ and $s\in \sd(\Delta^n)_k$.
The following lemma gathers some useful observations.

\begin{lem}\label{lem:level}
    Let $G$ be a group acting on a simplicial set $X$ and consider the induced action $G\curvearrowright \sd(X)$. Then the following assertions hold.
    \begin{enumerate}[label=(\alph*)]
        \item\label{item:mismolevel} We have $\lev(g\cdot \tau)=\lev(\tau)$ for every $g\in G$ and every simplex $\tau$ of $\sd(X)$.
        \item\label{item:1simp} If $\tau\in\sd(X)_1$ is a $1$-simplex with distinct vertices, then $\lev(d_0(\tau))=\lev(\tau)$ and $\lev(d_1(\tau))<\lev (\tau)$.
    \end{enumerate}
\end{lem}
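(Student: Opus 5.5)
The plan is to work with the minimal-pair description of Proposition \ref{propo:FP}, under which the $G$-action on $\sd(X)\cong X\otimes\sd$ is $g\cdot(\sigma,s)=(g\cdot\sigma,s)$. Both assertions then reduce to checking how minimal pairs behave under the action and under face maps.

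For \ref{item:mismolevel}, let $\tau$ have minimal pair $(\sigma,s)$ with $\sigma\in X_n$ non-degenerate and $s\in\sd(\Delta^n)_k$ interior. Then $g\cdot\tau$ is represented by $(g\cdot\sigma,s)$. The simplex $s$ is still interior. The simplex $g\cdot\sigma$ is still non-degenerate: if $g\cdot\sigma=\rho^*(\sigma')$ for a degeneracy operator $\rho$, then $\sigma=\rho^*(g^{-1}\cdot\sigma')$, because the action commutes with simplicial operators. Hence $(g\cdot\sigma,s)$ is a minimal pair, and by the uniqueness in \ref{item:FP1} we get $\lev(g\cdot\tau)=n$.

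For \ref{item:1simp}, write $\tau=(\sigma,s)$ minimal, with $\sigma\in X_n$ non-degenerate and $s=(S_0\subseteq S_1)$ interior, so $S_1=[n]$. The faces are $d_0\tau=(\sigma,(S_1))$ and $d_1\tau=(\sigma,(S_0))$.
\begin{itemize}
\item The vertex $(S_1)=([n])$ is interior, so $(\sigma,([n]))$ is already a minimal pair and $\lev(d_0\tau)=n$.
\item For $d_1\tau$, let $\delta:[m]\to[n]$ be the face operator with image $S_0$. Then $(S_0)=\delta_*([m])$, so $d_1\tau=(\delta^*\sigma,([m]))$. Since $\delta^*\sigma$ may be degenerate, write it as $\rho^*\sigma''$ with $\sigma''$ non-degenerate of dimension $m'\le m$ (Eilenberg--Zilber), and pass to the minimal pair $(\sigma'',([m']))$. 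This gives $\lev(d_1\tau)\le m=|S_0|-1$.
\end{itemize}

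It remains to show $S_0\ne[n]$, which is where the hypothesis of distinct vertices enters. If $S_0=[n]$, then $s$ is the degenerate $1$-simplex $s_0([n])$, so $\tau=s_0(d_0\tau)$ is degenerate and its two vertices coincide, contradicting the hypothesis. Hence $S_0\subsetneq[n]$, so $m<n$ and $\lev(d_1\tau)<\lev(\tau)$.

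The main obstacle is justifying the minimal-pair reductions in the second part, in particular that replacing $\delta^*\sigma$ by its non-degenerate root can only lower the dimension. This follows from Proposition \ref{propo:FP}\ref{item:FP2} (or directly from Eilenberg--Zilber), since the level is the dimension of the unique non-degenerate $\sigma$ in the minimal pair.
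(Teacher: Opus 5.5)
Your proof is correct and follows the paper's argument essentially step for step. For (a), minimal pairs are preserved by the action. For (b), $d_0\tau=(\sigma,[n])$ is already a minimal pair, distinct vertices force $S_0\subsetneq[n]$, and so the level of $d_1\tau$ is at most $|S_0|-1<n$.

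The only difference is in how you get the minimal pair of $d_1\tau$. You build it directly from a face operator onto $S_0$ and then pass to the Eilenberg--Zilber nondegenerate root. The paper instead invokes part (ii) of Proposition~\ref{propo:FP}, which gives $\rho(S_0)=[m]$ and hence $m+1\le|S_0|$.
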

\begin{proof}
    Let us prove \ref{item:mismolevel}. Write $\tau=(\sigma, s)$, with $(\sigma, s)$ a minimal pair. Then we have
    $g\cdot \tau=(g\cdot \sigma, s)$.
    Since $\sigma$ is non-degenerate, then $g\cdot \sigma$ is non degenerate too. Thus $(g\cdot \sigma, s)$ is the minimal pair representing $g\cdot \tau$ and $\lev(g\cdot \tau)=\lev(\tau)$.

    For \ref{item:1simp}, let $\tau\in\sd(X)_1$ with $d_0(\tau)\ne d_1(\tau)$. Let $(\sigma, s)\in X_n\times\sd(\Delta^n)_1$ be the unique minimal pair representing $\tau$. We have $s=(S_0\subseteq S_1)$ with $S_1=[n]$. Then:
    \begin{align*}
        d_0(\tau)&=(\sigma, d_0(s))=(\sigma, S_1)=(\sigma, [n]) \\
        d_1(\tau)&=(\sigma, d_1(s))=(\sigma, S_0)
    \end{align*}
    Since $\sigma$ is non-degenerate and $[n]\in\sd(\Delta^n)_0$ is an interior simplex, $(\sigma, [n])$ is the unique minimal pair representing $d_0(\tau)$. This implies that $\lev(d_0(\tau))=n=\lev(\tau)$. Since $d_0(\tau)\ne d_1(\tau)$, we have $S_0\subsetneq [n]$.
    By Proposition \ref{propo:FP} \ref{item:FP2} there exist a face operator $\delta: [m]\to [n]$ and a degeneracy operator $\rho:[n]\to [m]$ such that the minimal pair representing $d_1(\tau)$ is
    \[(\delta^*(\sigma), \rho_*(S_0))=(\delta^*(\sigma), [m])\in X_m\times \sd(\Delta^m)_0,\]
    where $m=\lev(d_1(\tau))$. On one hand, we have $|S_0|<n+1$ since $S_0\subsetneq [n]$. On the other, we have $m+1\leq |S_0|$ since $\rho(S_0)=[m]$. It follows that $\lev(d_1(\tau))=m<n=\lev(\tau)$.
\end{proof}

\begin{lem}\label{lem:sdfree}
    Let $G$ be a group acting freely on a simplicial set $X$. Then the induced action of $G$ on $\sd(X)$ is free.
\end{lem}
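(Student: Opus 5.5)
We show that if $g\cdot\tau=\tau$ for some simplex $\tau$ of $\sd(X)$ and some $g\in G$, then $g=e$. The tool is the uniqueness of minimal pairs in Proposition~\ref{propo:FP}~\ref{item:FP1}, together with the explicit description of the induced action as $g\cdot(\sigma,s)=(g\cdot\sigma,s)$.

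Let $\tau\in\sd(X)_k$ and let $(\sigma,s)\in X_n\times\sd(\Delta^n)_k$ be its unique minimal pair, so $\sigma$ is non-degenerate and $s$ is interior. As in the proof of Lemma~\ref{lem:level}~\ref{item:mismolevel}, the simplex $g\cdot\sigma$ is non-degenerate because $G$ acts by simplicial automorphisms: $g\cdot\sigma=s_i^*(\sigma')$ would give $\sigma=s_i^*(g^{-1}\cdot\sigma')$. Hence $(g\cdot\sigma,s)$ is the minimal pair of $g\cdot\tau$. If $g\cdot\tau=\tau$, uniqueness of minimal pairs yields $g\cdot\sigma=\sigma$ in $X_n$.

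Freeness of $G\curvearrowright X$ then forces $g=e$. Here "free" means that no non-identity element fixes any simplex. If instead the convention is that $G$ acts freely on vertices only, we reduce to that case: $\sigma$ has vertex $v=(\text{last vertex map})^*\sigma\in X_0$, and $g\cdot\sigma=\sigma$ gives $g\cdot v=v$, hence $g=e$. Either convention therefore suffices.

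The only delicate point is justifying that $(g\cdot\sigma,s)$ is again a minimal pair, so that uniqueness applies. This amounts to the observation that simplicial automorphisms preserve non-degeneracy, which is already used in the proof of Lemma~\ref{lem:level}~\ref{item:mismolevel}. We conclude that $G$ acts freely on every $\sd(X)_k$.
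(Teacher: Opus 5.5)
Your proof is correct and is essentially the paper's argument: take the minimal pair $(\sigma,s)$ of $\tau$, note that $(g\cdot\sigma,s)$ is again a minimal pair because $g\cdot\sigma$ is non-degenerate, use the uniqueness in Proposition~\ref{propo:FP} to get $g\cdot\sigma=\sigma$, and conclude $g=e$ by freeness. Your explicit check that $g\cdot\sigma$ is non-degenerate and your remark that vertex-wise freeness also suffices are both valid, though the paper does not spell them out.
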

\begin{proof}
    Let $\tau\in\sd(X)_k$ and $g\in G$ be such that $g\cdot \tau=\tau$. Write $\tau=(\sigma, s)$, with $(\sigma, s)$ a minimal pair. We have
    \[(\sigma, s)=g\cdot (\sigma, s)=(g\cdot \sigma, s).\]
    Since $g\cdot \sigma$ is a non degenerate simplex of $X$, then $(g\cdot \sigma, s)$ is also a minimal pair. The uniqueness provided by Lemma \ref{propo:FP} implies that $g\cdot\sigma=\sigma$, and thus $g=e$, since $G$ acts freely on $X$.
\end{proof}

\begin{lem}\label{lem:freeimpliesA}
    Let $G$ be a finite group acting freely on a simplicial set $X$. Then the induced action of $G\curvearrowright\sd(X)$ satisfies the condition \ref{item:A}.
\end{lem}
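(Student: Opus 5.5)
Suppose, for contradiction, that $v\in\sd(X)_0$, $g\ne e$, and a simplex $\tau$ of $\sd(X)$ lies in $\St(v)\cap\St(g\cdot v)$. The plan is to show that every simplex of $\sd(X)$ contains at most one vertex of each level, and that $v$ and $g\cdot v$ are distinct vertices of the same level. These two facts together give the contradiction.

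First I record the vertices. By Lemma~\ref{lem:level}\ref{item:mismolevel}, $\lev(g\cdot v)=\lev(v)$. By Lemma~\ref{lem:sdfree}, the action on $\sd(X)$ is free, so $g\cdot v\ne v$. Both $v$ and $g\cdot v$ are vertices of $\tau$, that is, of the form $\beta^*(\tau)$ for maps $\beta:[0]\to[k]$.

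Second, I show that distinct vertices of a simplex of $\sd(X)$ have distinct levels. Write $\tau=(\sigma,s)$ as a minimal pair with $\sigma\in X_n$ and $s=(S_0\subseteq\cdots\subseteq S_k)$. The vertex $i$ of $\tau$ is $(\sigma,S_i)$. If two vertices are distinct, say $(\sigma,S_i)\ne(\sigma,S_j)$ with $i<j$, the edge between them is the $1$-simplex $(\sigma,S_i\subseteq S_j)$, which has distinct vertices. Lemma~\ref{lem:level}\ref{item:1simp} then gives $\lev(\sigma,S_i)<\lev(\sigma,S_j)$. Hence any two distinct vertices of $\tau$ have different levels.

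Applying this to the distinct vertices $v$ and $g\cdot v$ of $\tau$ contradicts $\lev(g\cdot v)=\lev(v)$. Therefore $\St(v)\cap\St(g\cdot v)=\emptyset$, which is condition~\ref{item:A}. Finiteness of $G$ appears not to be needed. The main point to check is that a vertex of $\tau$ equals $(\sigma,S_i)$ and the corresponding edge equals $(\sigma,S_i\subseteq S_j)$. This holds because the simplicial operators act on the second coordinate, and the operator $\beta^*$ is well defined on equivalence classes.
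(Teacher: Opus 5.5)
Your proof is correct and follows the paper's argument: freeness on $\sd(X)$ gives $g\cdot v\ne v$, Lemma~\ref{lem:level}\ref{item:mismolevel} gives equal levels, and Lemma~\ref{lem:level}\ref{item:1simp} applied to an edge joining $v$ and $g\cdot v$ gives the contradiction. Your one addition is that you construct that edge explicitly as $(\sigma,S_i\subseteq S_j)$ inside a common simplex, a step the paper leaves implicit; you are also right that finiteness of $G$ is never used.
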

\begin{proof}
     Let $v\in \sd(X)_0$ be a vertex and let $g\in G$, $g\ne e$. Note that $g\cdot v\ne v$ by Lemma \ref{lem:sdfree} and $\lev(g\cdot v)=\lev(v)$ by Lemma \ref{lem:level} \ref{item:mismolevel}. Suppose that $\St_{\sd(X)}(v)\cap\St_{\sd(X)}(g\cdot v)\ne \emptyset$. Then there is a $1$-simplex $\tau\in \sd(X)_1$ whose vertices are $v$ and $g\cdot v$. That is, $\tau$ is a $1$-simplex of $\sd(X)$ that has distinct vertices with the same level. This cannot happen by Lemma \ref{lem:level} \ref{item:1simp}.
    It follows that $\St_{\sd(X)}(v)\cap\St_{\sd(X)}(g\cdot v)= \emptyset$.
\end{proof}

\begin{lem}\label{lem:AimpliesB}
    Let $G$ be a finite group acting freely on a simplicial set $X$. If the action of $G$ on $X$ satisfies condition \ref{item:A}, then the induced action of $G\curvearrowright\sd(X)$ satisfies the condition \ref{item:B}.
\end{lem}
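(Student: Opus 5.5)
The plan is to reduce condition \ref{item:B} for $\sd(X)$ to a statement about paths of length at most two in the $1$-skeleton of $\sd(X)$. I would then use the level function of Lemma \ref{lem:level} to push any such path down to a simplex of $X$ that violates condition \ref{item:A}.

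\emph{Reduction.} Suppose $\St_{\sd(X)}(w)\cap \St_{\sd(X)}(\overline{\St(g\cdot w)})\neq\emptyset$ for some vertex $w$ of $\sd(X)$ and some $g\ne e$. Then there is a vertex $u$ of $\overline{\St(g\cdot w)}$ and a simplex of $\sd(X)$ containing both $w$ and $u$. Since $u$ lies in $\overline{\St(g\cdot w)}$, it is a vertex of some simplex that also has $g\cdot w$ as a vertex. So it suffices to rule out vertices $w,u$ such that $w,u$ lie in a common simplex and $u,g\cdot w$ lie in a common simplex.

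\emph{Carriers.} For a vertex $w$ of $\sd(X)$, write its minimal pair as $(\sigma_w,[n])$ with $\sigma_w\in X_n$ non-degenerate, and call $\sigma_w$ the \emph{carrier} of $w$. I would prove the following claim: if two distinct vertices $w,u$ lie in a common simplex $\tau$ of $\sd(X)$, then their levels differ, and the carrier of the one of lower level is a face of the carrier of the other.

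To see this, write $\tau=(\sigma,s)$ as a minimal pair with $s=(S_0\subseteq\cdots\subseteq S_k)$. Each vertex of $\tau$ is $(\sigma,S_i)$. Arguing exactly as in the proof of Lemma \ref{lem:level}\ref{item:1simp} with Proposition \ref{propo:FP}\ref{item:FP2}, the carrier of $(\sigma,S_i)$ is a face $\delta_i^*(\sigma)$, obtained from the face of $\sigma$ spanned by $S_i$ after removing degeneracies. Because the $S_i$ are nested, any two of these carriers are faces of one another. Distinct vertices in a common simplex span a $1$-simplex with distinct vertices, so Lemma \ref{lem:level}\ref{item:1simp} shows their levels differ.

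\emph{Case analysis.} By Lemma \ref{lem:sdfree} we have $g\cdot w\neq w$, and by Lemma \ref{lem:level}\ref{item:mismolevel} the vertices $w$ and $g\cdot w$ have the same level. Also $\sigma_{g\cdot w}=g\cdot\sigma_w$.

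\begin{itemize}
\item If $u=w$ or $u=g\cdot w$, then $w$ and $g\cdot w$ are distinct vertices of the same level in a common simplex. This contradicts the claim.
\item If $\lev(u)>\lev(w)$, then both $\sigma_w$ and $g\cdot\sigma_w$ are faces of $\sigma_u$. Pick any vertex $v$ of $\sigma_w$. Then $\sigma_u\in\St(v)\cap\St(g\cdot v)$, contradicting \ref{item:A}.
\item If $\lev(u)<\lev(w)$, then $\sigma_u$ is a face of both $\sigma_w$ and $g\cdot\sigma_w$. Pick a vertex $x$ of $\sigma_u$. Then $x$ and $g^{-1}\cdot x$ are both vertices of $\sigma_w$, so $\sigma_w\in\St(x)\cap\St(g^{-1}\cdot x)$. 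Since $g^{-1}\neq e$, this contradicts \ref{item:A}.
\end{itemize}

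Since the levels of $u$ and $w$ cannot coincide when $u\ne w$ by the claim, these cases are exhaustive.

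The main obstacle I expect is making the carrier claim rigorous. One must check that the carrier of a vertex $(\sigma,S_i)$ really is a face of $\sigma$ even when $\sigma$'s faces are degenerate or repeat vertices, which is possible in a simplicial set. I would handle this with Proposition \ref{propo:FP}\ref{item:FP2}, which provides $\delta$ with $\sigma_{(\sigma,S_i)}=\delta^*(\sigma)$. I would also take care that $\St(v)$ is read as simplices having $v$ among their vertices, so a face relation suffices.
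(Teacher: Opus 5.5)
Your proposal is correct and follows the paper's argument. You reduce to a middle vertex sharing a simplex with $w$ and with $g\cdot w$, use Lemma \ref{lem:level} to compare levels, and take the carrier of the higher-level vertex as a simplex of $X$ lying in $\St(x)\cap\St(g^{\pm 1}\cdot x)$, which contradicts \ref{item:A}. The differences are minor: you split cases by comparing $\lev(u)$ with $\lev(w)$ rather than by the orientations of the two $1$-simplices, so the paper's cases (i) and (iv) never arise; you handle the lower-level middle vertex directly via $g^{-1}$ instead of by relabelling; and your "because the $S_i$ are nested" step is best justified, as the paper does, by passing to the $1$-simplex spanned by the two vertices, whose last vertex has the same carrier as that $1$-simplex.
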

\begin{proof}
    Let $v\in \sd(X)_0$ be a vertex and let $g\in G$, $g\ne e$. Suppose that
    \begin{equation}\label{lem:conditionB}\St_{\sd(X)}(v)\cap \St_{\sd(X)}(\overline{\St_{\sd(X)}(g\cdot v)})\ne\emptyset.\end{equation}
    Unravelling the definitions, we may find $w\in\sd(X)_0$ and $\tau_1,\tau_2\in\sd(X)_1$ such that $v$ and $w$ are the vertices of $\tau_1$ and $w$ and $g\cdot v$ are the vertices of $\tau_2$. Note that $v$, $g\cdot v$ and $w$ are three distinct vertices of $\sd(X)$. Indeed, $g\cdot v\ne v$ since $G$ acts freely on $\sd(X)$ by Lemma \ref{lem:sdfree}. Moreover $w\ne v$ and $w\ne g\cdot v$ since $G\curvearrowright\sd(X)$ satisfies condition \ref{item:A}. We have four possible cases:
    \begin{enumerate}[label=(\roman*)]
        \item\label{item:casea} $d_0(\tau_1)=v$, $d_1(\tau_1)=w$, $d_0(\tau_2)=w$ and $d_1(\tau_2)=g\cdot v$;
        \item\label{item:caseb} $d_0(\tau_1)=w$, $d_1(\tau_1)=v$, $d_0(\tau_2)=w$ and $d_1(\tau_2)=g\cdot v$;
        \item\label{item:casec} $d_0(\tau_1)=v$, $d_1(\tau_1)=w$, $d_0(\tau_2)=g\cdot v$ and $d_1(\tau_2)=w$;
        \item\label{item:cased} $d_0(\tau_1)=w$, $d_1(\tau_1)=v$, $d_0(\tau_2)=g\cdot v$ and $d_1(\tau_2)=w$.
    \end{enumerate}
    The case \ref{item:cased} cannot happen since we would have $\lev(g\cdot v)=\lev (v)$ by Lemma \ref{lem:level} \ref{item:mismolevel} and $\lev(g\cdot v)>\lev(v)$ by Lemma \ref{lem:level} \ref{item:1simp}. The case \ref{item:casea} cannot happen since we would have $\lev(w)<\lev(v)=\lev(g\cdot v)<\lev(w)$,
    again by Lemma \ref{lem:level}. Suppose that we are in the case \ref{item:caseb}. Note that we have
    $\lev(\tau_1)=\lev(w)=\lev(\tau_2)=:n$
    by Lemma \ref{lem:level} \ref{item:1simp}.
    Represent $\tau_i$ by a minimal pair $(\sigma_i, s_i)$. Then we have
    \[(\sigma_1, [n])=(\sigma_1, d_0(s_1))=d_0(\tau_1)=w=d_0(\tau_2)=(\sigma_2, d_0(s_2))=(\sigma_2, [n]).\]
    Since all the pairs above are minimal, we have $\sigma_1=\sigma_2=:\sigma$. Put $m:=\lev(v)=\lev(g\cdot v)$. By Proposition \ref{propo:FP} \ref{item:FP2}, we can represent $v$ and $g\cdot v$ by minimal pairs as follows, where $\eta_1,\eta_2\in X_m$ are (proper) faces of $\sigma\in X_n$:
    \begin{align*}
        v&=(\eta_1, [m]) \\
        g\cdot v&=(\eta_2, [m])
    \end{align*}
    Since $g\cdot \eta_1$ is non-degenerate, $(g\cdot \eta_1, [m])$ is also a minimal pair representing $g\cdot v$ and we conclude that $g\cdot \eta_1=\eta_2$.
    Let $u\in X_0$ be any vertex of $\eta_1$. Then $g\cdot u$ is a vertex of $g\cdot \eta_1=\eta_2$. Since both $\eta_1$ and $\eta_2$ are faces of $\sigma$, 
    this means that $\sigma\in\St_X(u)\cap\St_X(g\cdot u)$, contradicting condition \ref{item:A}. We still have to prove that the case \ref{item:casec} cannot happen but, if we are in this case, we can easily reduce to the case \ref{item:caseb} after relabelling. Indeed, put:
    \begin{align*}
        \tilde{\tau}_1&:=\tau_2 \\
        \tilde{\tau}_2&:=g\cdot \tau_1 \\
        \tilde{v}&:=w \\
        \tilde{w}&:=g\cdot v
    \end{align*}
    With these new labels, the conditions of case \ref{item:caseb} are satisfied. Since none of the four conditions can happen, we conclude that \eqref{lem:conditionB} cannot hold.
\end{proof}

\begin{coro}\label{coro:freeimpliesB}
    Let $G$ be a finite group acting freely on a finite simplicial set $X$. Then the induced action $G\curvearrowright \sd^2(X)$ is free and satisfies condition \ref{item:B}.
\end{coro}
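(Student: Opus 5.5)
The corollary should follow by applying the two preceding lemmas in turn, first to $X$ and then to $\sd(X)$. Write $\sd^2(X)=\sd(\sd(X))$, where the action of $G$ on $\sd^2(X)$ is the one induced from the induced action on $\sd(X)$.

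\emph{Freeness.} Since $G$ acts freely on $X$, Lemma \ref{lem:sdfree} shows that $G$ acts freely on $\sd(X)$. Applying Lemma \ref{lem:sdfree} once more, now to the free action $G\curvearrowright\sd(X)$, shows that $G$ acts freely on $\sd^2(X)$.

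\emph{Condition \ref{item:B}.} Since $G$ is finite and acts freely on $X$, Lemma \ref{lem:freeimpliesA} gives that the action $G\curvearrowright\sd(X)$ satisfies condition \ref{item:A}. We now apply Lemma \ref{lem:AimpliesB} to the simplicial set $\sd(X)$. Its hypotheses hold: $G$ is finite, $G$ acts freely on $\sd(X)$ by the previous paragraph, and this action satisfies condition \ref{item:A}. The lemma therefore yields that the induced action on $\sd(\sd(X))=\sd^2(X)$ satisfies condition \ref{item:B}.

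The only point that needs care is that these lemmas are applied to $\sd(X)$ rather than to $X$. This is legitimate because Lemmas \ref{lem:sdfree}, \ref{lem:freeimpliesA} and \ref{lem:AimpliesB} are stated for an arbitrary simplicial set with a (free) $G$-action, so they hold verbatim with $\sd(X)$ in place of $X$. Finiteness of $X$ is not needed for this argument; it only ensures that $\sd^2(X)$ is again a finite simplicial set, which matters for the later application to Theorem \ref{thm:kk}.
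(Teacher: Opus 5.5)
Your proposal is correct and is the paper's argument: the paper's proof just cites Lemmas \ref{lem:sdfree}, \ref{lem:freeimpliesA} and \ref{lem:AimpliesB}. You spell out the intended chaining, namely freeness by applying Lemma \ref{lem:sdfree} twice, condition \ref{item:A} on $\sd(X)$ by Lemma \ref{lem:freeimpliesA}, and then Lemma \ref{lem:AimpliesB} applied to $\sd(X)$.
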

\begin{proof}
    It follows from lemmas \ref{lem:sdfree}, \ref{lem:freeimpliesA} and \ref{lem:AimpliesB}.
\end{proof}

\subsection{Quotient and crossed product} Let $\ell$ be a commutative ring with unit. There is a simplicial ring $\ell^\Delta$, $[n]\mapsto \ell^{\Delta^n}$, defined by
\[\ell^{\Delta^n}=\ell[t_0,\dots ,t_n]/\langle t_0+\cdots + t_n-1\rangle.\]
For a simplicial set $X$, put $\ell^X=\hom_{\sSet}(X,\ell^\Delta)$; see \cite{cortho}*{Section 3}. Then $\ell^X$ is an $\ell$-algebra and we have
\begin{equation}\label{eq:ellX}\ell^X=\lim_{\Delta^n\downarrow X}\ell^{\Delta^n},\end{equation}
where the limit is taken over the category of simplices of $X$. We think of $\ell^X$ as the algebra of piecewise polynomial functions on $X$. If $G$ is a group and $X$ is a $G$-simplicial set, then $\ell^X$ is a $G$-algebra.

\begin{lem}\label{lem:tentfunctions}
    Let $X$ be a finite simplicial set. Then there exists a family of piecewise polynomial functions $\{\phi_v\}_{v\in X_0}\subseteq\ell^X$ such that
    \begin{enumerate}[label=(\alph*)]
        \item $\phi_v(v)=1$ and $\phi_v(\sigma)=0$ if $\sigma\not\in \St(v)$;
        \item $\sum_{v \in X_0}\phi_v=1$.
    \end{enumerate}
\end{lem}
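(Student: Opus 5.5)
The tent functions will be the barycentric coordinates, defined simplex by simplex, and the proof reduces to checking that these local choices glue under the limit description \eqref{eq:ellX}. Since $\ell^X=\hom_{\sSet}(X,\ell^\Delta)$, an element of $\ell^X$ is a family $(f_\sigma)_{\sigma\in X_n}$ with $f_\sigma\in\ell^{\Delta^n}$ satisfying $\alpha^*(f_\sigma)=f_{\alpha^*\sigma}$ for every $\alpha\in\hom_\Delta([m],[n])$.

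For $v\in X_0$ and $\sigma\in X_n$, write $\sigma_i\in X_0$ for the $i$-th vertex of $\sigma$, that is, the image of $\sigma$ under the map induced by $[0]\to[n]$, $0\mapsto i$. Define
\[(\phi_v)_\sigma=\sum_{i\,:\,\sigma_i=v} t_i\in\ell^{\Delta^n}.\]
This is a finite sum, and it is empty, so equal to $0$, when $v$ is not a vertex of $\sigma$.

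The main step is naturality. Let $\alpha:[m]\to[n]$. The induced map $\alpha^*:\ell^{\Delta^n}\to\ell^{\Delta^m}$ is the ring map $t_i\mapsto\sum_{j\in\alpha^{-1}(i)}t_j$, i.e. $t_i\mapsto\sum_{j:\alpha(j)=i}t_j$. This map respects the relation $\sum_i t_i=1$, since it sends $\sum_i t_i$ to $\sum_j t_j$. The vertices of $\alpha^*\sigma$ satisfy $(\alpha^*\sigma)_j=\sigma_{\alpha(j)}$. Therefore
\[\alpha^*(\phi_v)_\sigma=\sum_{i:\sigma_i=v}\ \sum_{j:\alpha(j)=i}t_j=\sum_{j:\sigma_{\alpha(j)}=v}t_j=(\phi_v)_{\alpha^*\sigma}.\]
Hence $\phi_v\in\ell^X$. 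This bookkeeping, namely getting the correct variance for $\ell^\Delta$, is the only point that needs care. It handles degenerate simplices automatically, because repeated vertices contribute several $t_i$.

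Property (a) now follows. For the vertex $v$ itself, viewed as a $0$-simplex, we have $\ell^{\Delta^0}=\ell$ and $(\phi_v)_v=t_0=1$. If $\sigma\notin\St(v)$, then $v$ is not a vertex of $\sigma$, so the sum is empty and $(\phi_v)_\sigma=0$.

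For property (b), note that $X_0$ is finite because $X$ is finite, so $\sum_{v\in X_0}\phi_v$ is a finite sum in $\ell^X$. On each simplex $\sigma$, every index $i\in[n]$ appears exactly once, namely for $v=\sigma_i$. Thus
\[\Bigl(\sum_{v\in X_0}\phi_v\Bigr)_\sigma=\sum_{i=0}^n t_i=1\]
in $\ell^{\Delta^n}$. Since this holds on every simplex, $\sum_{v\in X_0}\phi_v=1$ in $\ell^X$.
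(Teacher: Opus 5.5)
Your proof is correct and takes the same approach as the paper: you define $(\phi_v)_\sigma$ as the sum of the coordinates $t_i$ over the indices $i$ with $\sigma_i=v$, which is exactly the paper's $\sum_{i\in I(v,\sigma)}t_i$. The only difference is that you carry out the compatibility check $\alpha^*(\phi_v)_\sigma=(\phi_v)_{\alpha^*\sigma}$, using the correct formula $\alpha^*(t_i)=\sum_{j\in\alpha^{-1}(i)}t_j$, whereas the paper leaves that verification to the reader.
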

\begin{proof}
    An element of $\ell^X$ \eqref{eq:ellX} is a compatible family $(\phi_\sigma)_{\sigma}$ where $\sigma:\Delta^n\to X$ is a simplex and $\phi_\sigma\in\ell^{\Delta^ n}=\ell[t_0,\dots,t_n]/\langle 1-\sum_i t_i\rangle$. For each $v\in X_0$ we can define a function $\phi_v\in \ell^X$ by
\[(\phi_v)_\sigma=\sum_{i\in I(v,\sigma)}t_i,\]
where $I(v,\sigma)=\{0\leq i\leq n:\text{$v$ is the $i$-th vertex of $\sigma$}\}$. Here, the phrase \emph{$v$ is the $i$-th vertex of $\sigma$} means that the following diagram commutes:
\[\xymatrix@C=5em{\Delta^0\ar@/_1pc/[rr]_-{v}\ar[r]^{\text{$i$-th vertex}} & \Delta^n\ar[r]^{\sigma} & X}\]
It is verified that the $(\phi_v)_\sigma$ are compatible and indeed define $\phi_v\in \ell^X$. Moreover, $\phi_v$ vanishes outside $\St(v)$ and $\sum_{v\in X_0}\phi_v=1$.
\end{proof}

\begin{thm}\label{thm:mainapp}
    Let $G$ be a finite group acting freely on a finite simplicial set $X$. If the action $G\curvearrowright X$ satisfies condition \ref{item:B}, then the algebras $\ell^{X/G}$ and $\ell^{X}\rtimes G$ are Morita equivalent.  
\end{thm}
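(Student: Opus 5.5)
We will realize $\ell^X\rtimes G$ as a full matrix algebra over $\ell^{X/G}$ cut down by a full idempotent, or equivalently produce an explicit Morita context. First identify $\ell^{X/G}$ with the fixed-point algebra $(\ell^X)^G$. Since $X/G$ is the coequalizer of the $G$-action and $\ell^{(-)}=\hom_{\sSet}(-,\ell^\Delta)$ turns colimits into limits, this is immediate. Then take $P=\ell^X$, viewed as an $(\ell^X)^G$–$(\ell^X\rtimes G)$-bimodule: the left action is multiplication, and the right action is $f\cdot(a\rtimes g)=g^{-1}\cdot(fa)$. Take $Q=\ell^X$, viewed as an $(\ell^X\rtimes G)$–$(\ell^X)^G$-bimodule in the symmetric way. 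The pairings are
\[
\alpha(f\otimes f')=\sum_{g\in G}g\cdot(ff')\in(\ell^X)^G
\]
(the averaging map, with no division by $|G|$) and
\[
\beta(f'\otimes f)=\sum_{g\in G}f'\,(g\cdot f)\rtimes g\in\ell^X\rtimes G.
\]
The mixed associativity relations are routine computations, as is the bimodule compatibility. This is the standard Morita context between a fixed-point algebra and a crossed product.

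By the usual Morita theory of a context, it suffices to show that both pairings are surjective. A surjective pairing in a unital Morita context is automatically an isomorphism, with $\alpha$ and $\beta$ then both bijective.

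For surjectivity of $\alpha$ onto $(\ell^X)^G$, we use Lemma~\ref{lem:tentfunctions}. Choose a set $T\subseteq X_0$ of representatives of the $G$-orbits of vertices; freeness makes each orbit a copy of $G$. Put $\phi=\sum_{v\in T}\phi_v$. Then $\sum_{g}g\cdot\phi=\sum_{v\in X_0}\phi_v=1$, using $g\cdot\phi_v=\phi_{g\cdot v}$. Hence $1=\alpha(\phi\otimes 1)$, and $\alpha$ is onto.

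For surjectivity of $\beta$ it suffices to hit $1\rtimes e$. We look for functions $\psi_v$ with
\[
\sum_v\sum_g\psi_v\,(g\cdot\psi_v)\rtimes g=1\rtimes e.
\]
This requires $\psi_v\,(g\cdot\psi_v)=0$ for $g\ne e$, together with $\sum_v\psi_v^2=1$. A natural candidate is $\psi_v=\phi_v$ with $v$ running over all of $X_0$, but then $\sum_v\phi_v^2\neq 1$. Instead, take elements of the form $\beta(\phi_v\otimes \chi_v)$, where $\chi_v$ is a function equal to $1$ on the support of $\phi_v$ and supported in $\St(\overline{\St(v)})$. Then
\[
\beta(\phi_v\otimes\chi_v)=\sum_g\phi_v\,(g\cdot\chi_v)\rtimes g .
\]
Here $g\cdot\chi_v$ is supported in $\St(\overline{\St(g\cdot v)})$, while $\phi_v$ is supported in $\St(v)$. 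Condition~\ref{item:B}, applied with the roles of $v$ and $g\cdot v$ adjusted (using $g^{-1}$), says exactly that these supports are disjoint for $g\ne e$. So only the $g=e$ term survives, giving $\phi_v\chi_v=\phi_v$. Summing over $v\in X_0$ yields $\sum_v\phi_v\rtimes e=1$, and $\beta$ is onto.

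The main obstacle is constructing $\chi_v$. We would try $\chi_v=\sum_{w\in\overline{\St(v)}_0}\phi_w$. This equals $1$ on every simplex of $\overline{\St(v)}$, since on such a simplex all vertices $w$ with $\phi_w\neq0$ lie in $\overline{\St(v)}$ and $\sum_w\phi_w=1$. It vanishes outside $\St(\overline{\St(v)})$. We must check carefully, at the level of the compatible families in \eqref{eq:ellX}, that the vanishing of $\phi_v$ off $\St(v)$ combines correctly with the support of $g\cdot\chi_v$ off $\St(\overline{\St(g v)})$. Concretely: on a simplex $\sigma$, the product $(\phi_v)_\sigma (g\cdot\chi_v)_\sigma$ is zero unless $\sigma\in\St(v)\cap\St(\overline{\St(g\cdot v)})$, which is empty by~\ref{item:B}. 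Degenerate simplices, and vertices repeated within a simplex, need care here.
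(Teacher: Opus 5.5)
Your proof is correct. Its two key identities are the paper's, but you package them in a different framework.

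\textbf{The paper's route.} It sets $S=\ell^X$ and $R=\ell^{X/G}=S^G$, and appeals to the Chase--Harrison--Rosenberg criterion. It takes cutoffs $\psi_v\equiv 1$ on $\overline{\St(v)}$, supported in $\St(\overline{\St(v)})$ and obtained from a cited existence theorem. The identity $\sum_{v}\phi_v\,(g\cdot\psi_v)=\delta_{g,e}$ then shows that $S$ is finitely generated projective over $R$ with $S\rtimes G\cong\End_R(S)$. Next, the trace $T(\phi)=\sum_g g\cdot\phi$ satisfies $T(\phi_F)=1$, so $S$ is a generator, and Morita theory of progenerators concludes.

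\textbf{Your route.} You write down the standard Morita context between $(\ell^X)^G$ and $\ell^X\rtimes G$ and check that both pairings are surjective. Your identity $\sum_v\beta(\phi_v\otimes\chi_v)=1\rtimes e$ is literally the paper's Galois identity, and $\alpha(\phi\otimes 1)=1$ is literally its trace computation.

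\textbf{What each buys.} Your route is self-contained. It needs no Galois theory of commutative rings and no progenerator characterization, and it directly produces a tuple $(R,S,P,Q,\alpha,\beta)$ of the form in the paper's definition of Morita equivalence. Your explicit cutoff $\chi_v=\sum_{w\in\overline{\St(v)}_0}\phi_w$ replaces the paper's cited existence result. You verify correctly that it equals $1$ on $\overline{\St(v)}$ and vanishes off $\St(\overline{\St(v)})$. The paper's route, in exchange, records explicitly that $\ell^X$ is a $G$-Galois extension of $\ell^{X/G}$.

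\textbf{Two minor points.} First, condition (B) is already stated as $\St(v)\cap\St(\overline{\St(g\cdot v)})=\emptyset$, so no re-indexing by $g^{-1}$ is needed. Second, the care you flag about degenerate simplices and repeated vertices is already built into the tent functions. Indeed, $(\phi_w)_\sigma=\sum_{i\in I(w,\sigma)}t_i$ is zero whenever $w$ is not a vertex of $\sigma$, and all support statements are checked simplexwise on compatible families.
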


\begin{proof}
Put $R=\ell^{X/G}$ and $S=\ell^X$. Let $\pi:X\to X/G$ be the projection. Note that the inclusion $\pi^*:R\to S$ identifies $R$ with the subalgebra $S^G\subset S$ of those elements fixed by $G$. There is a natural algebra homomorphism $j:S\rtimes G\to \End_R(S)$ determined by $j(\phi\rtimes g)(\psi)=\phi(g\cdot \psi)$. We claim that $S$ is a finitely generated projective $R$ module and that $j$ is an isomorphism. By \cite{CHR}*{Theorem 1.3}, to prove this we can construct $\phi_1,\dots ,\phi_n, \psi_1,\dots, \psi_n\in S$ such that
\[\sum_{i=1}^n \phi_i(g\cdot \psi_i)=\begin{cases}
    1 & \text{if $g=e$,} \\
    0 & \text{otherwise.}
\end{cases}\]

Let $\{\phi_v\}_{v\in X_0}\subseteq \ell^X$ be the family of functions satisfying both conditions in Lemma \ref{lem:tentfunctions}.
By \cite{CE}*{Theorem 9.4.1}, for each $v\in X_0$ we may choose $\psi_v\in S$ such that $\psi_v|_{\overline{\St(v)}}\equiv 1$ and $\psi_v$ vanishes outside $\St(\overline{\St(v)})$. We claim that
\[\sum_{v\in X_0} \phi_v(g\cdot \psi_v)=\begin{cases}
    1 & \text{if $g=e$,} \\
    0 & \text{otherwise.}
\end{cases}\]
For $g=e$, this follows from the equality $\phi_v\psi_v=\phi_v$. For $g\ne e$, it follows from the fact that $\phi_v$ vanishes outside $\St(\overline{\St(v)})$, $g\cdot \psi_v$ vanishes outside $\St(\overline{\St(g\cdot v)})$ and these sets are disjoint.

We have proved that $S$ is a finitely generated projective $R$-module and that $S\rtimes G\cong \End_R(S)$. To show that $S\rtimes G$ and $R$ are Morita equivalent, it suffices to prove that the $R$-module $S$ is a generator; see for example \cite{lam}*{Thm. 18.24} or \cite{willie}*{Teorema 5.2.7}. To show that $S$ is a generator we can show that its trace is $R$; see for example \cite{lam}*{Thm. 18.11 and Thm. 2.44}. Consider $T\in \Hom_R(S,R)$, $T(\phi)=\sum_{g\in G}g\cdot \phi$. Choose a set of representatives of the orbits $F\subset X_0$ and put $\phi_F=\sum_{v\in F}\phi_v$. Then we have
\[T(\phi_F)=\sum_{g\in G}\sum_{v\in F}g\cdot \phi_v=\sum_{g\in G}\sum_{v\in F}\phi_{g\cdot v}=\sum_{v\in X_0}\phi_v=1.\]
Then the trace of $S$ is $R$ and the result follows.
\end{proof}

\begin{lem}[cf. Lemma 4.6.1]\label{lem:sdhtpyequiv}
    The morphism $\sd(\Delta^n)\to \Delta^0$ induces a (polynomial) homotopy equivalence $\ell\to\ell^{\sd(\Delta^n)}$.
\end{lem}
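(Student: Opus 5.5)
We need to show that the structure map $c:\sd(\Delta^n)\to\Delta^0$ induces a polynomial homotopy equivalence $c^*:\ell=\ell^{\Delta^0}\to\ell^{\sd(\Delta^n)}$. The plan is to produce a one-sided inverse and then an explicit polynomial homotopy for the other composite.

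\emph{The inverse.} Pick a vertex of $\sd(\Delta^n)$, the barycenter $b=[n]$, viewed as $b:\Delta^0\to\sd(\Delta^n)$. Since $c\circ b=\id_{\Delta^0}$, we get $b^*\circ c^*=\id_\ell$. So the whole content is to show that $c^*\circ b^*:\ell^{\sd(\Delta^n)}\to\ell^{\sd(\Delta^n)}$, which is evaluation at $b$ followed by the constant-function inclusion, is polynomially homotopic to the identity. A homotopy means a homomorphism $H:\ell^{\sd(\Delta^n)}\to\ell^{\sd(\Delta^n)}\otimes\ell[t]$ with $\mathrm{ev}_0H=c^*b^*$ and $\mathrm{ev}_1H=\id$. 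Since $\ell^{Y}\otimes\ell[t]\cong\ell^{Y\times\Delta^1}$ for finite $Y$ (or one can work with $\ell^{\Delta^1}\cong\ell[t]$), it suffices to build such an $H$ out of a simplicial map.

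\emph{The simplicial contraction.} Recall that $\sd(\Delta^n)$ is the nerve of the poset $P$ of nonempty subsets of $[n]$, and $[n]$ is the maximum of $P$. The inequalities $S\subseteq[n]$ give a natural transformation from $\id_P$ to the constant functor at $[n]$, hence a poset map $P\times[1]\to P$, namely $(S,0)\mapsto S$ and $(S,1)\mapsto[n]$. Taking nerves yields a simplicial map
\[h:\sd(\Delta^n)\times\Delta^1\to\sd(\Delta^n)\]
with $h|_{\times\{0\}}=\id$ and $h|_{\times\{1\}}=b\circ c$, using $N(P\times[1])\cong NP\times\Delta^1$. Applying $\ell^{(-)}$ gives $h^*:\ell^{\sd(\Delta^n)}\to\ell^{\sd(\Delta^n)\times\Delta^1}$, and restricting to the two ends recovers $\id$ and $c^*b^*$ (with the roles of $0$ and $1$ swapped, which is harmless).

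\emph{Main obstacle.} The nontrivial step is the identification $\ell^{Y\times\Delta^1}\cong\ell^Y\otimes\ell^{\Delta^1}=\ell^Y[t]$ compatible with evaluation at the endpoints, since $\ell^{(-)}$ does not obviously convert products into tensor products. I would rely on the known fact, from \cite{cortho}*{Section 3}, that for finite $Y$ a simplicial homotopy $Y\times\Delta^1\to Z$ induces an elementary polynomial homotopy between the induced maps $\ell^Z\to\ell^Y$ — this is exactly the mechanism behind the cited Lemma 4.6.1. If that reference is not directly usable, the fallback is to prove it by induction over the nondegenerate simplices of $\sd(\Delta^n)$, using the limit description \eqref{eq:ellX}. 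The base case is $\ell^{\Delta^m\times\Delta^1}\cong\ell^{\Delta^m}[t]$, which follows from the standard triangulation of the prism together with the fact that $\ell^{(-)}$ sends the gluing pushouts to pullbacks. Granting this, $h$ yields the required polynomial homotopy $\id\simeq c^*b^*$, and hence $c^*$ is a homotopy equivalence with homotopy inverse $b^*$.
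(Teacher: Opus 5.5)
Your homotopy inverse $b^*$ (evaluation at the barycenter, with $b^*c^*=\id_\ell$) is the same as the paper's, and the nerve-level contraction $h$ does exist. The gap is the passage from $h$ to a polynomial homotopy.

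The identification $\ell^{Y}\otimes\ell[t]\cong\ell^{Y\times\Delta^1}$ is false already for $Y=\Delta^1$. Here $\ell^{\Delta^1\times\Delta^1}=\ell^{\Delta^2}\times_{\ell^{\Delta^1}}\ell^{\Delta^2}$ consists of functions that are polynomial on each of the two triangles of the square. It has zero divisors: multiply a function supported on one triangle by one supported on the other. By contrast, $\ell^{\Delta^1}[t]\cong\ell[x,t]$ is a domain when $\ell$ is. What \cite{cortho} actually gives is $\ell^X[t]\cong(\ell[t])^X$, which changes the coefficients, not the simplicial set.

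The ``known fact'' you invoke also fails. Take $h=\id_{\Delta^1\times\Delta^1}$. The two induced maps $\ell^{\Delta^1\times\Delta^1}\to\ell^{\Delta^1}$ are restriction to the bottom edge and restriction to the top edge, which lie in different triangles $T_0$ and $T_1$. Let $I_j$ be the ideal of functions vanishing on $T_j$. Then $I_0I_1=0$, so any homomorphism $H:\ell^{\Delta^1\times\Delta^1}\to\ell[x,s]$ kills $I_0$ or $I_1$. But restriction to the top edge does not kill $I_0$, and restriction to the bottom edge does not kill $I_1$, so no elementary homotopy joins them. Your fallback base case $\ell^{\Delta^m\times\Delta^1}\cong\ell^{\Delta^m}[t]$ is false for the same reason: the prism is triangulated, and piecewise polynomial functions on it need not be polynomial.

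The missing idea is a contraction that is polynomial on each simplex, i.e.\ one that moves every point of $\sd(\Delta^n)$ inside a single simplex. Your $h$ is only affine on the simplices of the triangulated prisms, so it does not provide this. The paper instead uses the straight-line contraction to the last vertex on each interior simplex $s:\Delta^k\to\sd(\Delta^n)$: $H_s(t_i)=(1-t)t_i$ for $i<k$ and $H_s(t_k)=t+(1-t)t_k$. These are compatible with the morphisms $\alpha$ between interior simplices precisely because $\alpha(k)=k'$.

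Interior simplices form a final subcategory $\IntSimp\subseteq\Simp$, so $\ell^{\sd(\Delta^n)}=\lim_{\IntSimp}\ell^{\Delta^k}$. Using $\ell^X[t]\cong(\ell[t])^X$, also $\ell^{\sd(\Delta^n)}[t]=\lim_{\IntSimp}\ell^{\Delta^k}[t]$. Hence the $H_s$ assemble into a single elementary homotopy from $\id$ to $c^*b^*$. Replacing your contraction and obstacle steps with this argument repairs the proof.
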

\begin{proof}
    Let $a:\ell\to\ell^{\sd(\Delta^n)}$ be induced by $\sd(\Delta^n)\to \Delta^0$; then $a$ is the inclusion of $\ell$ as constant functions on $\sd(\Delta^n)$. Let $b:\ell^{\sd(\Delta^n)}\to \ell$ be the evaluation at the barycenter of $\Delta^n$. We claim that $b$ is a homotopy inverse for $a$. We clearly have $b\circ a=\id$. We now proceed to define an elementary homotopy from $\id$ to $a\circ b$ by contracting $\sd(\Delta^n)$ to the barycenter of $\Delta^n$. Write $\Simp$ for the category of simplices of $\sd(\Delta^n)$ and $\IntSimp$ for its subcategory defined as follows. The objects of $\IntSimp$ are the \emph{interior} simplices of $\sd(\Delta^n)$, that is, those simplices whose last vertex is the barycenter of $\Delta^n$. A morphism from $s\in (\sd(\Delta^n))_k$ to $s'\in (\sd(\Delta^n))_{k'}$ is a non decreasing function $\alpha:[k]\to[k']$ making the triangle below commute and such that $\alpha(k)=k'$.
    \[\xymatrix{\Delta^k\ar[d]_-{\alpha}\ar[r]^-{s} & \sd(\Delta^n) \\
    \Delta^{k'}\ar@/_0.5pc/[ur]_-{s'} & }\]
    Using the description of the simplices of $\sd(\Delta^n)$ from Section \ref{sec:subdivision}, it is easily verified that $\IntSimp$ is a final subcategory of $\Simp$ in the sense of \cite{cwm}*{Section IX.3}.
    Indeed, let us fix any simplex $t:\Delta^l\to \sd(\Delta^n)$, $t=(T_0\subseteq \cdots \subseteq T_l)$, and show that the comma category $t\downarrow \IntSimp$ is non-empty. Define $\hat{t}:\Delta^{l+1}\to\sd(\Delta^n)$, $\hat{t}=(T_0\subseteq \cdots \subseteq T_l\subseteq [n])$, and let $\iota:[l]\to [l+1]$ be the inclusion. Then $\hat{t}$ is an interior simplex and $\iota:t\to \hat{t}$ is a morphism in $\Simp$. To show that $t\downarrow\IntSimp$ is connected, note that $\hat{t}$ is an initial object of $t\downarrow\IntSimp$.
    It follows that
    \[\sd(\Delta^n)=\colim_{s\in\Simp} \Delta^k=\colim_{s\in\IntSimp} \Delta^k.\]
    Since $A^-:\sSet\to\Alg^\op$ commutes with colimits for any $A\in\Alg$ \cite{cortho}*{Section 3}, we have:
    \begin{align*}\ell^{\sd(\Delta^n)} &=\lim_{s\in\IntSimp}\ell^{\Delta^k}\\
    (\ell^{\sd(\Delta^n)})[t]& = \lim_{s\in\IntSimp}\ell^{\Delta^k}[t].\end{align*}
    In the latter equality we are using moreover that $\ell^X[t]\cong (\ell[t])^X$ for a finite simplicial set $X$ \cite{cortho}*{Proposition 3.1.3}. Thus, to define a homotopy $H:\ell^{\sd(\Delta^n)}\to (\ell^{\sd(\Delta^n)})[t]$ it suffices to define a compatible family of homotopies $H_s:\ell^{\Delta^k}\to\ell^{\Delta^k}[t]$ for $s\in\IntSimp$. For an interior simplex $s:\Delta^k\to\sd(\Delta^n)$, define 
    \[H(t_i)=\begin{cases}
        (1-t)t_i & \text{for $i<k$,} \\
        t+(1-t)t_k & \text{for $i=k$.}
    \end{cases}\]
    Let $\alpha:[k]\to [k']$ be a morphism from $s$ to $s'$ in $\IntSimp$. Using that $\alpha(k)=k'$ it is straightforward to verify that $\alpha^*\circ H_{s'}=H_s\circ \alpha^*$. Thus, upon taking limit, these homotopies assemble into the desired homotopy from $\id$ to $a\circ b$.
\end{proof}

\begin{lem}\label{lem:sd}
    Let $G$ be a finite group acting on a finite simplicial set $X$. Then the last vertex map $\sd(X)\to X$ induces a $\kk^G$-equivalence $\ell^X\to\ell^{\sd(X)}$.
\end{lem}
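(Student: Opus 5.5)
The plan is to show that the last vertex map $\lambda:\sd(X)\to X$ induces a polynomial homotopy equivalence $\lambda^*:\ell^X\to\ell^{\sd(X)}$ whose homotopy inverse and homotopies are all $G$-equivariant. Since $j^G$ is homotopy invariant, this makes $j^G(\lambda^*)$ invertible in $\kk^G$. The map $\lambda$ is natural in $X$, hence $G$-equivariant, so $\lambda^*$ is a morphism of $G$-algebras. Lemma \ref{lem:sdhtpyequiv} handles the case $X=\Delta^n$: there $\lambda^*$ factors through the constants $\ell\to\ell^{\sd(\Delta^n)}$, and $\ell^{\Delta^n}\simeq\ell$ is already known. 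The task is to globalize this equivariantly.

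\textbf{Step 1 (gluing).} Use \eqref{eq:ellX} and $\sd(X)\cong X\otimes\sd=\colim_{\Delta^n\downarrow X}\sd(\Delta^n)$. Since $A^{-}$ turns colimits into limits, this gives
\[\ell^X=\lim_{\sigma\in\Delta\downarrow X}\ell^{\Delta^n},\qquad \ell^{\sd(X)}=\lim_{\sigma\in\Delta\downarrow X}\ell^{\sd(\Delta^n)},\]
and $\lambda^*$ is the limit of the maps $\lambda_n^*:\ell^{\Delta^n}\to\ell^{\sd(\Delta^n)}$. To build a global inverse, first choose a homotopy inverse $\mu_n:\ell^{\sd(\Delta^n)}\to\ell^{\Delta^n}$ natural in $[n]\in\Delta$. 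The natural candidate is pullback along a simplicial map $\Delta^n\to\sd(\Delta^n)$. Such maps are not natural for degeneracies, so instead I would take $\mu$ as the map induced on coordinate rings by a polynomial ``geometric realization'' identification, namely the one sending $t_i$ to the barycentric coordinates. This requires polynomial functions on simplices of $\sd(\Delta^n)$ to glue to a polynomial function on $\Delta^n$, and they do not. So I would abandon a strict inverse and argue homotopically instead.

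\textbf{Step 1$'$ (revised approach).} Proceed by induction on skeleta. Because $X$ is finite, $X$ is obtained from $X^{(k-1)}\cup$(the $k$-simplices) by equivariant pushouts along $G\times(\partial\Delta^k\to\Delta^k)$, or along orbit-type versions $G/H\times\partial\Delta^k$ in general. Since $\sd$ commutes with colimits, $\ell^{(-)}$ turns these pushouts into pullbacks. These are Milnor squares with a surjective leg: restriction to a simplicial subset is surjective by the extension result \cite{CE}*{Theorem 9.4.1} used above. They therefore give excisive extensions in $\GAlg$. Both $\ell^{\Delta^k}$ and $\ell^{\sd(\Delta^k)}$ are homotopy equivalent to $\ell$ via $\lambda^*$, by Lemma \ref{lem:sdhtpyequiv}, and the same holds after tensoring with $\ell^{G/H}$, equivariantly. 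The kernels $\ker(\ell^{\Delta^k}\to\ell^{\partial\Delta^k})$ and its subdivided analogue are then compared by the five lemma applied to the long exact sequences that $\kk^G$-excision gives for any $G$-algebra $D$, that is, in $\kk^G(D,-)$.

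\textbf{Step 2 (conclude).} Run the induction over the skeleta. At each stage, $\lambda^*$ induces isomorphisms on $\kk^G(D,-)$ for the quotient term, via the subdivided simplex comparison, and for the subalgebra term, via the induction hypothesis on $\partial\Delta^k$ and on the lower skeleton. The five lemma then gives the isomorphism on the middle term. Yoneda upgrades this to $j^G(\lambda^*)$ being an isomorphism.

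\textbf{Main obstacle.} The crux is the base case $\partial\Delta^k\to\Delta^k$, equivalently showing that $\lambda^*$ is a $\kk$-equivalence on the relative ideals $\ker(\ell^{\Delta^k}\to\ell^{\partial\Delta^k})\to\ker(\ell^{\sd\Delta^k}\to\ell^{\sd\partial\Delta^k})$. I would get this from the five lemma on the two restriction extensions, using the induction for $\partial\Delta^k$ and Lemma \ref{lem:sdhtpyequiv} for $\Delta^k$. The remaining care is needed when stabilizers are nontrivial: I would check that $\ell^{G/H\times Y}$ is a finite product of copies of $\ell^Y$ carrying an induced action, so that homotopy equivalences built non-equivariantly on one copy induce equivariant ones.
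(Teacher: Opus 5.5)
Your revised argument (Steps 1$'$ and 2) is correct and is essentially the paper's proof. Both use induction on dimension via the equivariant cell pushouts along $\coprod_i G/H_i\times\partial\Delta^n\subseteq\coprod_i G/H_i\times\Delta^n$, handle the case $G/H\times\Delta^n$ by tensoring the homotopy equivalence of Lemma \ref{lem:sdhtpyequiv} with $\ell^{G/H}$, and conclude with the five lemma; the paper just packages your two five-lemma steps (first the relative ideal, then $X$) into one Mayer--Vietoris sequence in $\kk^G_*(-,E)$. The one step to tighten is that excision in $\kk^G$ needs the restriction maps $\ell^X\to\ell^K$ to be $\ell$-linearly split, not merely surjective, which the paper obtains from \cite{er}*{Lemma B.2 and Lemma B.5}.
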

\begin{proof}
    Let $\gamma_X:\sd(X)\to X$ denote the last vertex map.
    We first prove the result for $X=\Delta^n$ with the trivial $G$-action. It turns out that in this case, the morphism $\gamma_{\Delta^n}^*:\ell^{\Delta^n}\to\ell^{\sd(\Delta^n)}$ is actually a (polynomial) homotopy equivalence. For $n=0$ there is nothing to prove. For $n>0$, we have a commutative diagram of algebras as follows, where the diagonal morphisms are induced by the projections onto $\Delta^0$.
    \[\xymatrix{\ell^{\Delta^n}\ar[rr]^-{\gamma_{\Delta^n}^*} & & \ell^{\sd(\Delta^n)} \\
     & \ell\ar[ul]\ar[ur] &}\]
    Since both diagonal morphisms are homotopy equivalences (by Lemma \ref{lem:sdhtpyequiv}), the morphism $\gamma_{\Delta^n}^*$ is a homotopy equivalence as well.

Let us now prove the result for $X=G/H\times\Delta^n$, where $H\subseteq G$ is a subgroup, $\Delta^n$ has the trivial $G$ action and $X$ has the diagonal action. Since $X=\coprod_{G/H}\Delta^n$ and $\sd$ commutes with colimits, we have $\sd(X)=\coprod_{G/H}\sd(\Delta^n)$. Thus, we have:
\begin{align*}\ell^{X}&=\prod_{G/H}\ell^{\Delta^n}=\ell^{G/H}\otimes\ell^{\Delta^n}\\
\ell^{\sd(X)}&=\prod_{G/H}\ell^{\sd(\Delta^n)}=\ell^{G/H}\otimes\ell^{\sd(\Delta^n)}\end{align*}
Moreover, under these identifications, the morphism $\gamma_X^*$ becomes \[\id\otimes \gamma_{\Delta^n}^*:\ell^{G/H}\otimes\ell^{\Delta^n}\to \ell^{G/H}\otimes\ell^{\sd(\Delta^n)},\]
which is a homotopy equivalence since $\gamma_{\Delta^n}^*$ is.

    We now prove the result for general $X$, by induction on $n=\dim(X)$. If $n=0$, there is nothing to prove since the last vertex map is the identity morphism. Let $n\geq 1$ and suppose that we have proved the result for finite simplicial sets of dimension $k\leq n-1$. If $\dim(X)=n$, we have a pushout square of $G$-simplicial sets as follows, where $\{H_i\}$ is a finite family of subgroups of $G$.
    \begin{equation}\label{eq:sk}\begin{gathered}\xymatrix{\coprod_i G/H_i\times \partial\Delta^n\ar[r]\ar[d] & X_{n-1}\ar[d] \\
    \coprod_iG/H_i\times\Delta^n\ar[r] & X}\end{gathered}\end{equation}
    To ease notation, put $Y=\coprod_iG/H_i\times\Delta^n$ and $Z=\coprod_iG/H_i\times\partial\Delta^n$.
    Upon applying $\sd$ to \eqref{eq:sk} we get a second pushout square, and $\gamma$ is a morphism from the latter to \eqref{eq:sk}. By \cite{er}*{Lemma B.2 and Lemma B.5}, $\gamma$ induces a morphism of long exact Mayer-Vietoris sequences as follows, for every $E\in\Alg$:
    \[\xymatrix@C=1.2em{
     \ar[r] & \kk^G_*(\ell^{\sd(Z)},E)\ar[r]\ar[d]^-{\gamma_Z} & \kk^G_*(\ell^{\sd(Y)}, E)\oplus \kk^G_*(\ell^{\sd(X_{n-1})},E)\ar[r]\ar[d]^-{\gamma_Y\oplus \gamma_{X_{n-1}}} & \kk^G_*(\ell^{\sd(X)}, E)\ar[r]\ar[d]^-{\gamma_X} & \\
    \ar[r] & \kk^G_*(\ell^Z,E)\ar[r] & \kk^G_*(\ell^Y, E)\oplus \kk^G_*(\ell^{X_{n-1}},E)\ar[r] & \kk^G_*(\ell^X, E)\ar[r] &
    }\]
    The morphisms $\gamma_Z$ and $\gamma_{Y}\oplus\gamma_{X_{n-1}}$ are isomorphisms by the inductive hypothesis and the particular case that was proven above. It follows by the five lemma that $\gamma_X$ is an isomorphism as well.
\end{proof}

\begin{coro}\label{coro:kkeq}
    Let $G$ be a finite group acting freely on a finite simplicial set $X$. Then $\ell^X\rtimes G$ and $\ell^{X/G}$ are $\kk$-equivalent.
\end{coro}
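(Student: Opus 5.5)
The plan is to pass to the second barycentric subdivision, where Theorem \ref{thm:mainapp} applies, and then move the resulting Morita equivalence back to $X$ using Lemma \ref{lem:sd} together with the Morita invariance of $j:\Alg\to\kk$.

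First, by Corollary \ref{coro:freeimpliesB}, the induced action $G\curvearrowright\sd^2(X)$ is free and satisfies condition \ref{item:B}. Moreover $\sd^2(X)$ is still a finite simplicial set. Theorem \ref{thm:mainapp} then shows that $\ell^{\sd^2(X)/G}$ and $\ell^{\sd^2(X)}\rtimes G$ are Morita equivalent unital algebras. Since $j$ is $M_2$-stable, Corollary \ref{cor:gid} shows it is Morita invariant, so these two algebras are isomorphic in $\kk$.

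Second, we compare with $X$. The last vertex map is natural and $G$-equivariant, and $\sd$ commutes with colimits. Hence $\sd(X)/G\cong\sd(X/G)$, and the last vertex map $\sd(X/G)\to X/G$ is the map induced on quotients. Applying Lemma \ref{lem:sd} with the trivial group to $X/G$, and then to $\sd(X/G)$, shows that $\ell^{X/G}\to\ell^{\sd^2(X)/G}$ is a $\kk$-equivalence. Applying Lemma \ref{lem:sd} twice with $G$ acting, the map $\ell^X\to\ell^{\sd^2(X)}$ is a $\kk^G$-equivalence.

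Third, we need the crossed product to turn $\kk^G$-equivalences into $\kk$-equivalences. The functor $A\mapsto j(A\rtimes G)$ from $\GAlg$ to $\kk$ should be homotopy invariant, excisive and $G$-stable. The key point is that $M_SA\rtimes G$ is isomorphic to $M_{|S|}(A\rtimes G)$ up to matrix stability, which for finite $G$ follows from Lemma \ref{lem:CDAequiv} and Lemma \ref{lemeq}. By the universal property of $j^G$ \cite{ekk}*{Thm. 4.1.1}, the functor therefore factors through $\kk^G$. This is essentially the Green--Julg-type descent functor $\kk^G\to\kk$, and I expect it to be either available in \cite{ekk} or checkable by hand.

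Applying the descent functor to the $\kk^G$-equivalence $\ell^X\to\ell^{\sd^2(X)}$ gives a $\kk$-equivalence $\ell^X\rtimes G\to\ell^{\sd^2(X)}\rtimes G$. Chaining the isomorphisms,
\[\ell^X\rtimes G\cong\ell^{\sd^2(X)}\rtimes G\cong\ell^{\sd^2(X)/G}\cong\ell^{X/G}\quad\text{in }\kk.\]

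I expect the main obstacle to be this descent step: checking carefully that $(-)\rtimes G$ is compatible with $G$-stability, homotopy invariance and the relevant extensions. If this is not available directly, a fallback is to redo the Mayer--Vietoris induction of Lemma \ref{lem:sd} for the functor $\kk_*(-\rtimes G,E)$. That route needs only that $(-)\rtimes G$ preserves the relevant extensions, which holds because it is exact since $G$ is finite, and preserves polynomial homotopies.
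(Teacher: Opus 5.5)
Your proposal is correct and follows the paper's proof essentially step by step. You pass to $\sd^2(X)$ via Corollary \ref{coro:freeimpliesB}, apply Theorem \ref{thm:mainapp} and Corollary \ref{cor:gid}, and transport back on both sides with Lemma \ref{lem:sd}, using $\sd^2(X)/G\cong\sd^2(X/G)$. The descent step you flag as the main obstacle, namely that $(-)\rtimes G$ induces a functor $\kk^G\to\kk$, is exactly what the paper takes from \cite{ekk}*{Prop.~5.1.2}, and your universal-property sketch is the standard way to obtain it.
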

\begin{proof}
    By Corollary \ref{coro:freeimpliesB}, the action $G\curvearrowright \sd^2(X)$ is free and satisfies condition \ref{item:B}. By Theorem \ref{thm:mainapp}, the algebras $\ell^{\sd^2(X)/G}$ and $\ell^{\sd^2(X)}\rtimes G$ are Morita equivalent. Thus, they are $\kk$-equivalent by Corollary \ref{cor:gid}. By Lemma \ref{lem:sd}, we have $\kk$-equivalences $\ell^{X/G}\sim_\kk \ell^{\sd^2(X/G)}=\ell^{\sd^2(X)/G}$ (since $\sd$ commutes with colimits) and $\ell^{X}\sim_{\kk^G}\ell^{\sd^2(X)}$. Finally, since the crossed product functor can be defined at the level of $\kk$-theory \cite{ekk}*{Prop. 5.1.2}, it follows that $\ell^X\rtimes G\sim_\kk \ell^{\sd^2(X)}\rtimes G$. Chaining these equivalences together yields
    \[\ell^{X/G} \sim_\kk \ell^{\sd^2(X)/G} \sim_\kk \ell^{\sd^2(X)}\rtimes G \sim_\kk \ell^X\rtimes G.\qedhere\]
\end{proof}

\end{document}